\title{Neural Networks in non-metric spaces}
\author{Luca Galimberti}
\address{Luca Galimberti \\ 
King's College London\\
Department of Mathematics\\
Strand Building, WC2R 2LS, London, UK}
\email[]{luca.galimberti\@@kcl.ac.uk}
\newtheorem{theorem}{Theorem}[section]
\newtheorem{definition}[theorem]{Definition}
\newtheorem{lemma}[theorem]{Lemma}
\newtheorem{proposition}[theorem]{Proposition}
\newtheorem{corollary}[theorem]{Corollary}
\newtheorem{remark}[theorem]{Remark}
\newtheorem{example}[theorem]{Example}
\newtheorem{Assumption}{Assumption}
\newcommand{\R}{\mathbb R}
\newcommand{\N}{\mathbb N}
\newcommand{\norm}[1]{\left\lVert#1\right\rVert}
\newcommand{\abs}[1]{\left |#1\right|}
\newcommand{\X}{\mathfrak X}
\newcommand{\Y}{\mathfrak Y}
\DeclareMathOperator{\Span}{span}
\date{\today}
\begin{document}
\begin{abstract}
Leveraging the infinite dimensional neural network architecture we proposed in \cite{BDG0} and which can process inputs from Fr\'echet spaces, and using the universal approximation property shown therein, we now largely extend the scope of this architecture by proving several universal approximation theorems for a vast class of input and output spaces. More precisely, the input space $\X$ is allowed to be a general topological space satisfying only a mild condition (``quasi-Polish''), and the output space can be either another quasi-Polish space $\Y$ or a topological vector space $E$. Similarly to \cite{BDG0}, we show furthermore that our neural network architectures can be projected down to ``finite dimensional'' subspaces with any desirable accuracy, thus obtaining approximating networks that are easy to implement and allow for fast computation and fitting. The resulting neural network architecture is therefore applicable for prediction tasks based on functional data. To the best of our knowledge, this is the first result which deals with such a wide class of input/output spaces and simultaneously guarantees the numerical feasibility of the ensuing architectures. Finally, we prove an obstruction result which indicates that the category of quasi-Polish spaces is in a certain sense the correct category to work with if one aims at constructing approximating architectures on infinite-dimensional spaces $\X$ which, at the same time, have sufficient expressive power to approximate continuous functions on $\X$, are specified by a finite number of parameters only and are ``stable'' with respect to these parameters.

\end{abstract}

\maketitle

\tableofcontents

\section{Introduction}

The study of neural networks on finite-dimensional Euclidean spaces can be traced back to the seminal paper \cite{McCullochPitts} by McCulloch and Pitts. The overall idea of this work was to imitate the functioning of the human brain with a system consisting of various
connections and neurons, where data is fed in, processed and finally returned as
output. In mathematical terms, such a object can be more conveniently described by a concatenation of affine and non-linear maps, where the affine maps represent the connections between the different
neurons while the non-linear maps the transformation of the input data. The well-known universal approximation theorem, first stated and shown by \cite{Cybenko1989} and \cite{Hornik1991}, ensures that such neural networks can approximate arbitrary well, uniformly on compact sets, any continuous function from $\R^d$ to $\R$. More precisely, for a fixed continuous function $\sigma : \mathbb{R}\rightarrow \mathbb{R}$ (the {\em activation function}) and $a \in \mathbb{R}^d, L , b \in \mathbb{R}$, a {\em neuron} is a function $\mathcal{N}_{L,a,b}\in C(\mathbb{R}^d)$
defined by 
\begin{equation}\label{eq: finite-dim neurons}
  x\mapsto L \sigma (a^{\top}x +b),   
\end{equation}
and a {\em one layer neural network} is a finite sum of neurons
\begin{equation}\label{eq: finite-dim nn}
\R^d\ni x\mapsto
\mathcal{N}(x) =
\sum_{j=1}^{J} \mathcal{N}_{L_j,a_j,b_j}(x).
\end{equation}
The universal approximation theorem then establishes conditions on $\sigma$ such that the set of one layer neural networks, which corresponds to the linear space of functions generated by the neurons
\begin{equation}\label{eq: space generated by finite-dim neurons}
\left\{
\sum_{j=1}^{J} \mathcal{N}_{L_j,a_j,b_j};\, J\in\N,L_j\in\R,a_j\in\R^d,b_j\in\R\right\}
\end{equation}
is dense in $C(\R^d)$ with respect to the topology of uniform convergence on compacts. 

To this end, the most widely known property of $\sigma$ that was shown in \cite{Cybenko1989} and \cite{HORNIK1989359} to lead to the density in $C(\mathbb{R}^d)$ of the linear span of neurons above is the {\em sigmoid} property: this requires that $\sigma$ admits limits at $\pm\infty$, i.e. $\lim_{t\rightarrow \infty} \sigma(t)=1$ and $\lim_{t\rightarrow -\infty} \sigma(t)=0$. This condition was later relaxed to a boundedness condition \cite{FUNAHASHI1989183} and a non-polynomial condition \cite{LESHNO1993861}.We point out that all these results pertain to finite-dimensional shallow neural networks consisting of one or two layers with many neurons (bounded depth, arbitrary width). In contrast stands the analysis of networks with arbitrary depth and bounded width which has also attracted a lot of attention recently \cite{articleWidth1,articleWidth2,pmlr-v125-kidger20a}. For a general overview of the earlier literature on the approximation theory of neural network we refer the reader to \cite{pinkus_1999} and for a more recent account to \cite{MathFoundationsDeep}. 

Lately, there has been a surging interest in neural networks with infinite-dimensional input and output spaces, e.g. suitable classes of infinite-dimensional vector spaces and metric spaces. For example, early instances of that can be found in \cite{392253}, where, among other results, continuous functions $f:U\to \R$ are approximated with suitable architectures: here $U$ denotes a compact subset of the Banach space of continuous functions $C(K)$, where $K$ is some compact subset of $\R^n$. Another early instance of this is provided by \cite{STINCHCOMBE1999467}, where continuous functions on locally convex spaces are analyzed. More recently, in \cite{BDG0,GKL} we considered functions and neural architectures which can process inputs from Fr\'echet spaces, while in \cite{CuchieroSchmockerTeichmann} the authors studied approximation capabilities of neural networks defined on infinite-dimensional weighted spaces. We will come back and review in more detail these results and further more in Subsection \ref{subsec: literature} below.

In the present paper, by leveraging the infinite dimensional neural networks we introduced in \cite{BDG0}, and using its universal approximation capability shown therein, we significantly extend the scope of this architecture by proving several universal approximation theorems for a vast class of infinite-dimensional input and output spaces. More precisely, the input space $\X$ is allowed to be a general topological space satisfying only a mild condition (``quasi-Polish'') which is very often met in practice, and the output space can be either another quasi-Polish space $\Y$ or a topological vector space $E$. By quasi-Polish we mean that the underlying space $\X$ admits the existence of a countable family of real valued continuous functions which separate points. The architecture we are going to construct will be constituted by two main parts: the first element is a linear superposition of infinite-dimensional neural networks from \cite{BDG0} defined on the separable Hilbert space $V:=\ell^2(\N)$, and fully specified by a set of trainable parameters; the second element consists of a continuous and injective map $F$ from $\X$ into $V$, whose existence is guaranteed by the ``quasi-Polish'' assumption. This map $F$ encodes the underlying topological and geometrical structure of the space $\X$; most of the times, it can be specified by the user, and it is not part of the trainable parameters. In other words, we transport via the map $F$ the input $x$ from the underlying space $\X$ (which is only a topological space and hence lacking any linear structure in general) to the more favorable Hilbert space $V$. Our ensuing architectures are then obtained by pre-composing the neural nets defined on $V$ with the continuous injection $F$. Finally, the class of activation functions $\sigma:V\to V$ which ensures the validity of these results is very wide and flexible, and can be seen as the proper generalization to an infinite-dimensional setting of the sigmoid property for functions from $\R$ to $\R$ recalled above: we refer to Subsection \ref{subsec: Infinite-dimensional neural networks} for further details.

While such approximation results might be of independent interest, from a practical point of view it is not clear a priori how these resulting architectures that involve infinite dimensional inputs and outputs as well as an infinite number of trainable parameters can actually be programmed in a machine with finite computational power and memory. We therefore address the important question of approximating our architectures by finite dimensional, easy to calculate quantities. In the scalar case, namely when the output space of the neural networks is $\R$, upon imposing the additional mild condition that the activation function $\sigma$ is Lipschitz, we are able to obtain neural networks which possess an architecture similar in spirit to the classical feedforward ones, with the only exception that the activation function is now allowed to be multidimensional. In the vector case, namely when the target is an arbitrary topological real vector space $E$, by the very nature of the problem, a second layer of approximation was necessary in the construction of our architectures and to accomplish a universal approximation result. Nonetheless, despite this additional element that makes the architectures more involved, also in the present case, under the assumption that the topological vector space $E$ admits a pre-Schauder basis, we could eventually obtain once more neural nets similar in spirits to the classical multi-layer perceptrons. Therefore, in all cases, we end up with architectures specified by a finite number of parameters which permit for an easy to calculate gradient, a crucial step for training the networks via a back-propagation algorithm. This is in stark contrast with other infinite-dimensional approximation results found in the literature, as in e.g. \cite{STINCHCOMBE1999467}, where in general the resulting architectures therein in many cases are specified by an infinite number of trainable parameters: refer to Subsection \ref{subsec: literature} for a deeper analysis. 

Possible applications of our results are within the area of machine learning, in particular in the many situations where the input of each sample in the training set is actually a function. This comprises \textit{i)} functional data analysis (see e.g. \cite{RamSilver} for an account on this subject and examples); \textit{ii)} learning the solution of a partial differential equation, where we refer to the works on physics-informed neural networks in \cite{PhysicsInformed}, DeepONets in \cite{10.1093/imatrm/tnac001,LiDeepONets}, neural operators in \cite{https://doi.org/10.48550/arxiv.2108.08481}, and the papers by e.g. \cite{Han8505,dneuralnetPDE,Cuchiero2020DeepNN,beck2021overview}; \textit{iii)} mathematical finance, e.g. \cite{BDG2}; \textit{iv)} approximation of infinite-dimensional dynamical systems, as in the echo-state networks and reservoir computing
  \cite{EchoState,ReservoirComputing} and in the so-called metric hypertransformers \cite{Hypertransformer}. Moreover, there are other instances where functional data appears naturally. For example grey scale images can be understood as a function $I: [0,1]^2 \rightarrow [0,1]$, and therefore for imagine classification or recognition problems (see \cite{https://doi.org/10.48550/arxiv.2201.11440,brainImage}) one is now interested in approximating the function $f$ that assigns to each image its classification $f(I)$.

Finally, we prove an obstruction result which indicates that the category of quasi-Polish spaces is in some sense the correct category to work with if one aims at constructing approximating architectures on infinite-dimensional spaces $\X$ (topological, algebraic,...) which, at the same time, have sufficient expressive power to approximate continuous functions on $\X$; which are implementable in practice because specified by a finite number of parameters only; and that are ``stable'' with respect to these parameters. These requirements are natural, because they demand respectively that \textit{i)} the approximating architectures satisfy a universal approximation property; \textit{ii)} they can be represented and implementable into a machine with finite memory and computing power; \textit{iii)} they enjoy a stability under small perturbations of their ``tuned parameters'', something which is very useful to have in practice. Broadly speaking (see Proposition \ref{lemma: uat + finite} for a more precise statement), we will prove that if a topological space $(\X,\tau)$ grants the existence of such architectures, then it must be necessarily quasi-Polish.

\subsection{Related literature and comparison with other results}\label{subsec: literature} 

The approximation with neural networks of functions defined on some possibly infinite dimensional space $\X$ probably goes back to \cite{76498}, where in the context of discrete time systems, non-linear functionals on a space of functions from $\mathbb{N} \cup \{ 0\}$ to $\mathbb{N}$ are approximated with neural networks. In \cite{6795742} the authors derive networks that approximate the functionals on the function spaces $L^p ([-1,1]^d)$ for $1 \leq p < \infty $ and $C ([-1,1]^d)$ for $d\in \N$. As already recalled above, in \cite{286886, 392253} the authors consider the approximation of non-linear operators defined on infinite dimensional spaces and use these results for approximating the output of dynamical systems. In \cite{STINCHCOMBE1999467}, the author could approximate, uniformly on compacts, real valued continuous functions defined on locally convex spaces via suitable architectures. In \cite{256500}, neural networks defined on Hilbert spaces were considered, and density results in the $L^2$-sense for these architectures were shown. \cite{Guss} proves the universal approximation property for two-layer infinite dimensional neural networks, and they show their approximation property for continuous maps between spaces of continuous functions on compacts.

Very recently, in \cite{Hypertransformer} the authors have considered hyper-transformers architectures with the aid of which they have been able to approximate maps $f:K\subset\R^d\to (Y,\rho)$, whereas $K$ is a compact subset, $(Y,\rho)$ is a suitable metric space and $f$ is assumed to be $\alpha$-H\"older, $0<\alpha\le 1$. The result is very interesting, since to our knowledge this is one of the first that considers as output space a quite general metric space. However, this result still falls into the category of intrinsically ``finite-dimensional'' results, as it can be seen here: since the authors consider only maps of $\alpha$-H\"older regularity (and not merely continuous functions), it follows that the Hausdorff dimension of $f(K)$ can be at most $d/\alpha$, and thus $f(K)$ can be homeomorphically embedded into the Euclidean space $\R^{1+2d/\alpha}$ by the celebrated Menger-N\"obeling theorem (refer to e.g. \cite[Theorem 1.12]{Robinson}). More challenging would be to deal with an arbitrary compact metric space $(K,d)$ and a continuous map with no extra regularity $f:(K,d)\to (Y,\rho)$ where $Y$ is another metric space, because in that case the above argument would not be valid anymore in general (just think of a space-filling curve and the related Hahn-Mazurkiewicz theorem). In this paper, we address this answer: refer to Remark \ref{rmk: Acciaio's problem}.

Always very recently, \cite{CuchieroSchmockerTeichmann} have studied approximation capabilities of neural networks defined on infinite-dimensional weighted spaces, obtaining global universal approximation results for continuous functions whose growth is controlled by a weight function. Crucial step in their proof (as well as in previous classical results) is some version of Stone-Weierstrass theorem. This result is then used to get a universal approximation theorem for architectures consisting of an additive family $\mathcal{H}$ as hidden layer maps and a non-linear activation function applied to each hidden layer. We recall here that an additive family is a collection of real valued separating points continuous functions that is closed under addition and contains the constants functions. An essential element here which must be stressed is that in this kind of architectures the elements of the additive family $\mathcal{H}$ are parts of the ``trainable parameters'', namely the optimization algorithm is required to select, among other things,  $f_1,\dots,f_M$ elements of $\mathcal{H}$ during the training procedure (exactly as in the case for plain vanilla neural networks where biases and weights are chosen by the stochastic gradient descent procedure). In our case, however, as aforementioned, the sequence of functions $(f_n)_{n\in\N}$ which separates points is not part of the trainable parameters, but it is specified by the user. The only trainable parameter associated to the sequence is an integer $N_\ast$ which selects the first $N_\ast$ elements $f_1,\dots, f_{N_\ast}$ of the sequence: refer to e.g. Theorem \ref{thm: second}. In contrast to most currently available neural networks for infinite spaces, our architecture focuses on information inherited by the decomposition of the input $x\in\X$ via the map $F$ (namely the separating sequence $(f_n)_{n\in\N}$). This decomposition carries important structural information that helps in the learning process and, in the very particular case where the separating sequence is induced by a Schauder basis (see Example \ref{ex: Frechet spaces carrying a Schauder basis}), this set of ideas was used in our previous results \cite{BDG0,GKL} (in the context of PDEs, a slightly similar approach has appeared with Fourier Neural Operators \cite{FNO}).

Other recent results are the so-called DeepONets for the approximation of operators between Banach spaces of continuous functions on compact subsets of $\mathbb{R}^n$: they have been proposed and analyzed in \cite{LiDeepONets, 10.1093/imatrm/tnac001}. DeepONets follow a similar structure as the one used in \cite{392253} of a branch net that uses signals to extract information about the functions in the domain, and a trunk net to map to the image. In DeepONets both the branch and trunk nets are deep neural nets. In  \cite{https://doi.org/10.48550/arxiv.2108.08481} and \cite{https://doi.org/10.48550/arxiv.2003.03485}, the authors propose a neural network method to approximate the solution operator that assigns to a coefficient function for a partial differential equation (PDE) its solution function. This leads again to the approximation of an operator between Banach spaces of functions that are defined on a bounded domain in $\mathbb{R}^n$. The neural network that is presented in \cite{https://doi.org/10.48550/arxiv.2108.08481,https://doi.org/10.48550/arxiv.2003.03485} is tailor-made for the specific problem at hand and its structure is motivated by the Green function which defines the solution to the PDE.

Finally, infinitely wide neural networks, with an infinite but countable number of nodes in the hidden layer have been studied in the context of Bayesian learning, Gaussian processes and kernel methods by several authors, see e.g., \cite{Neal,Williams,ChoSaul,HJ}.

\subsection{Outline} The outline for the paper is as follows. In Section \ref{sec: Preliminaries}, after introducing all the relevant notations, we give a primer on quasi-Polish spaces and numerous examples thereof; finally we recall the relevant material from \cite{BDG0}  which will be needed afterwards. In Section \ref{sec: Quasi-Polish neural networks architectures}, we rigorously introduce our infinite-dimensional neural network architectures on which the subsequent universal approximation theorems will be based. In Section \ref{sec: Main results}, we will prove our approximation results, and in Section \ref{sec: On the necessity of the quasi-Polish condition} we will present and prove an obstruction theorem.


\section{Preliminaries}\label{sec: Preliminaries}
In an attempt to make this paper more self-contained, we fix the relevant notation and briefly review some basic aspects of functional analysis and topology; give a thorough introduction to quasi-Polish spaces and provide several instances thereof; describe the neural network architectures from \cite{BDG0} which we are going to build on our results.

\subsection{Notation and conventions}

\begin{itemize}
    \item $\N=\{1,2,3,\dots\}$.
    \item All vector spaces are assumed to be real. For a given vector space $E$, $E^\ast$ will denote its algebraic dual.
    \item A topological vector space $(E,\tau_E)$ is a topological space that is also a vector space and for which the vector space operations of addition and scalar multiplication are continuous. It need not be Hausdorff.
    \item If $E$ and $F$ are two topological vector spaces, then $\mathcal{L}(E,F)$ will denote the space of linear and continuous maps from $E$ to $F$, and $\mathcal{L}(E):=\mathcal{L}(E,E)$. 
    \item If $E$ is a topological vector space, then $E':=\mathcal{L}(E,\R)$ is its topological dual.
    \item If $E$ is a topological vector space, then $\sigma(E,E')$ and $\sigma(E',E)$ will denote respectively the weak topology on $E$ and the weak-star topology on $E'$.
    \item A topological vector space $E$ is a locally convex space if its topology is determined by a family of seminorms $\{p_\lambda;\,\lambda\in\Lambda\}$ on it. This topology is Hausdorff if and only if $\cap_{\lambda\in\Lambda}\{x\in E;\, p_\lambda(x)=0\}=\{0\}$.
    \item For a topological vector space $E$, $\langle \cdot, \cdot \rangle_{E^\ast,E}$ will denote the duality between $E^\ast$ and $E$, i.e. the pairing between $E^\ast$ and $E$. When there is no possibility of confusion, we will omit $E^\ast$ and $E$ from the symbol and simply write $\langle\cdot,\cdot\rangle$.
    \item A locally convex space $E$ is called Fr\'echet if it is metrizable and complete. This happens precisely when the familiy of seminorms inducing its topology is countable. This family, say $\{p_n\}_{n\in\N}$, can be assumed increasing, and a compatible metric is then given by 
    \begin{equation}\label{eq: metric of Frechet}
    \rho(x,y):= \sum_{n=1}^\infty 2^{-n}\frac{p_n(x-y)}{1+p_n(x-y)},\quad x,y\in E.
    \end{equation}
    \item $V:=\ell^2(\N)=$ the Hilbert space of square-integrable real sequences $a=(a_j)_{j\in\N}$ with its natural scalar product $(a,b)_V=\sum_{j=1}^\infty a_jb_j$.
    \item A topological vector space $E$ is said to have a pre-Schauder basis $(s_k)_{k\in\N}\subset E$ provided that for every $x\in E$ there exists a unique sequence $(x_k)_{k\in\N}\subset\R$ such that $\sum_{k=1}^\infty x_ks_k$ converges to $x$. 
     We can then define the canonical linear projectors $\beta^E_k$ associated to the basis, namely
    \[
    \beta^E_k:E\to \R,\quad x\mapsto x_k,\quad k\in\N.
    \]
    We can also define the linear operators $\Pi_N$
    \[
    \Pi^E_N:E\to\Span\{s_1,\dots,s_N\},\quad x\mapsto\sum_{k=1}^Nx_ks_k=\sum_{k=1}^N\langle\beta^E_k,x\rangle s_k,\quad N\in\N,
    \]  
    which are called projections operators. If $\beta^E_k,k\in\N,$ are continuous, then also the projections $\Pi_N$ are continuous, and in this situation we say that $(s_k)_{k\in\N}$ is a Schauder basis: see \cite{PreSchauder}.

     If $E$ is now assumed to be Fr\'echet (with seminorms $\{p_n\}_{n\in\N}$)  and $(s_k)_{k\in\N}\subset E$ is a Schauder basis, then we have that for any $\mathcal{K}\subset E$ compact and $n\in\N$ one has
    \[
    \sup_{x\in\mathcal{K}} p_n(x-\Pi^E_Nx)\to 0,\quad\text{as } N\to\infty.
    \]   
    In the particular case when $E=V$, we will always write $\Pi_N$ rather than $\Pi_N^V$.
    \item Given a topological space $(\X,\tau)$, $\mathcal{B}(\X)$ will denote as usual its Borel sigma-algebra.
    \item Given two topological spaces $(\X,\tau_\X)$ and $(\Y,\tau_\Y)$, $C(\X,\Y)$ will denote the space of continuous functions from $\X$ to $\Y$. If $\Y=\R$ with the Euclidean topology, we will write $C(\X):=C(\X,\R)$.
    \item Given a topological space $(\X,\tau)$, a non-empty subset $T\subset\X$, and a topological vector space $E$, we will denote by $C(T)\otimes E$ the set of all finite sums
    \[
    \sum_i a_i \otimes z_i
    \]
    with $a_i\in C(T)$ and $z_i\in E$, and where $a_i\otimes z_i$ means the function $t\mapsto  a_i(t)z_i$. Clearly, $C(T)\otimes E$ is a vector subspace of $C(T,E)$.
    \item Given a measure space $(\Omega,\mathcal{A},\mu)$ and $p\in[1,\infty]$, as usual $L^p(\mu)=L^p(\Omega,\mathcal{A},\mu)$ will denote the $p$-$th$ Lebesgue space.
    \item A subset $C$ of a vector space $L$ is circled if $\lambda C\subset C$ whenever $\abs{\lambda}\le 1$. A subset $U$ of $L$ is radial if for every finite set of points $F\subset L$ there exists $\lambda_0\in\R$ such that $F\subset\lambda U$ whenever $\abs{\lambda}\ge \abs{\lambda_0}$.

\end{itemize}

\subsection{A primer on quasi-Polish spaces}
We are now going to introduce the class of input spaces on which we will construct our architectures and for which we will be able to prove our universal approximation results. 

Let $(\X,\tau)$ be an arbitrary topological space. The minimal assumption we are going to use in the whole paper is the following one borrowed from Jakubowski \cite{jakubowski}:

\begin{Assumption}\label{assumption}
There exists a countable family $\{h_i:\X\to [-1,1]\}_{i=1}^\infty$ of $\tau$-continuous functions which separate points of $\X$, namely for each $x_1,x_2\in\X$ with $x_1\neq x_2$ there exists $i\in \N$ such that
\[
h_i(x_1) \neq h_i(x_2).
\]
Such a family will be called a separating sequence.
\end{Assumption}
\begin{definition}
A topological space $(\X,\tau)$ which satisfies Assumption \ref{assumption} for some family $(h_i)_{i=1}^\infty$ will be called \textbf{quasi-Polish}. When we want to stress the sequence $(h_i)_{i=1}^\infty$ in the definition of $\X$, we will write $(\X,\tau,(h_i)_{i=1}^\infty)$.
\end{definition}
The assumption is very simple, and it is satisfied by a huge class of topological spaces. For instance all Polish spaces fall into this category, i.e. Polish spaces are quasi-Polish, as we will see below; besides, quasi-Polish spaces inherit many properties of Polish spaces. This kind of spaces were originally introduced by Jakubowski \cite{jakubowski}, and since then they have gained increased popularity, especially in SPDEs theory (see e.g. \cite{BrzezniakMotyl,BrzezniakOndrejat,SmithTrivisa}). One of their remarkable properties (proved by Jakubowski) is the validity of the Skorokhod's representation theorem (refer to e.g. \cite{daprato_zabczyk_1992}), which makes them a very versatile tools in problem pertaining to SPDEs and infinite dimensional stochastic analysis when the ambient is a non-metric space. For a nice and compact collection of the main properties of these spaces we refer to the Appendix of the recent paper \cite{GalimbertiHoldenKarlsenPang}.

Assumption 1 immediately gives rise to the following consequences:
\begin{enumerate}
    \item The induced map
    \[
    \X \ni x \stackrel{H}{\longmapsto} H(x) = (h_1(x),h_2(x),\dots) \in [-1,1]^\N
    \]
    is 1-1 and continuous, but in general it is not a homeomorphism of $\X$ onto a subspace of $[-1,1]^\N$, i.e. in general it fails to be an embedding.
    \item $H$ defines another topology $\tau_H$ on $\X$ which is weaker than $\tau$, i.e. $\tau \supset \tau_H$. This last topology is metrizable though, and hence both $\tau_H$ and $\tau$ are Hausdorff.
    \item By the well-known minimal property of compact topologies, both topologies coincide on $\tau$-compact sets $\mathcal{K}\subset\X$, and hence $\tau$-compact sets are metrizable.
    \item For any $\mathcal{K}\subset \X$ $\tau$-compact
    \[
        H\big |_{\mathcal{K}}:\mathcal{K}\to H(\mathcal{K})
    \]
    is a homeomorphism. Therefore, $H(\mathcal{K})$ is compact in $[-1,1]^\N$. 
    Besides, $\mathcal{K}$ is compact if and only if it is sequentially compact.
    \item For all $E\subset\X$ $\sigma$-compact subspaces of $(\X,\tau)$
    \[
        H\big |_{E}:E\to H(E)
    \]
    is a measurable isomorphism.
    \item $(\X,\tau)$ is functional Hausdorff, but need not be regular.
    \item If $A\subset\X$ is non-empty and if $\tau_A=\tau\cap A$ denotes its relative topology, then $(A,\tau_A,(h_i)_{i=1}^\infty)$ is also quasi-Polish.
\end{enumerate}

Before presenting many examples of quasi-Polish spaces, we make a few remarks, which will be tacitly used later: suppose we are given a countable family $\Bar{h}_i:\X\to \R,\,i\in\N$ of $\tau$-continuous functions which separate points of $\X$ but whose range is not necessarily in the interval $[-1,1]$. We can in any case exhibit a countable family $h_i,\,i\in\N$ satisfying Assumption \ref{assumption} by simply composing $\Bar{h}_i$ with a continuous and 1-1 function $\varphi:\R\to [-1,1]$, i.e. 
\[
h_i(x) := \varphi(\Bar{h}_i(x)),\quad x\in \X,
\]
like for example $\varphi(t)=\frac{2}{\pi}\arctan(t),\,t\in\R$.

More importantly, as it will become evident later, it is more convenient for our purposes to consider a homeomorphic version of the cube $[-1,1]^\N$, namely the Hilbert cube $\mathcal Q$ 
which can be seen as a subset of the separable Hilbert space $V=\ell^2(\N)$, i.e.
\[
\mathcal{Q}=\{ a\in V; \, 0\leq a_i\leq 1/i, \,i\in\N
\}.
\]
It can be easily shown that with the topology induced by the metric of $V$, $\mathcal{Q}$ is indeed homeomorphic to the cube $[-1,1]^\N$. In particular $\mathcal{Q}$ is a compact subset of $V$.

Therefore, given an arbitrary quasi-Polish space $(\X,\tau,(h_i)_{i=1}^\infty)$, by defining 
\[
f_i(x):= \frac{1}{2i}(h_i(x)+1),\quad i\in\N,\, x\in\X,
\]
we still obtain a separating sequence for which now $f_i(\X)\subset [0,1/i],\,i\in\N$.
For this reason, we may use from the beginning $(f_i)_{i=1}^\infty$ as a separating sequence; in view of the identification above, we now define the 1-1 and continuous map
\[
F: \X \to \mathcal{Q}\subset V, \quad
\X \ni x \stackrel{F}{\mapsto} F(x):= (f_i(x))_{i=1}^{\infty},
\]
and observe that clearly this map retains all the topological properties of the map $H$ above. In particular, for any $\mathcal{K}\subset \X$ $\tau$-compact 
\[
    F\big |_{\mathcal{K}}:\mathcal{K}\to F(\mathcal{K})
\]
is a homeomorphism and $F(\mathcal{K})$ is compact in  $V$.

In the rest of the paper, we will always resort to $\mathcal{Q}$ rather than $[-1,1]^\N$ and use the associated injection map $F$.

\subsection{Examples of quasi-Polish spaces} Let us now see several examples of quasi-Polish spaces, which will convince the reader of the width of this class. Since for our purposes it is important not only to prove that a topological space $(\X,\tau)$ is quasi-Polish but also to provide an ``amenable'' separating sequence $(h_i)_{i=1}^\infty$ (or $(f_i)_{i=1}^\infty$), many examples below are partially overlapping.

\begin{example}[Separable metrizable spaces]\label{ex: Separable metrizable spaces} Consider a topological space $(\X,\tau)$ which we assume to be separable and metrizable: let then $\rho$ be a metric on $\X$ inducing $\tau$; let $D=\{d_i;\,i\in\N\}\subset\X$ be dense and define
\[
h_i:\X\to\R,\quad x\mapsto h_i(x):= \rho(x,d_i).
\]
These functions are clearly $\tau$-continuous; moreover, given $x,x'\in \X$ with $x\neq x'$, there always exists an $i_0\in\N$ such that
\[
\rho(x,d_{i_0}) \neq \rho(x',d_{i_0}).
\]
Indeed, if for all $i\in\N$ we had $\rho(x,d_i) = \rho(x',d_i)$, then, by taking a sub-sequence $(i_n)_n$ such that $d_{i_n}\to x$ as $n\to\infty$ (which exists by density), we would get
\begin{equation*}
    \begin{split}
        \rho(x,d_{i_n}) &= \rho(x',d_{i_n}),\\
        \lim_{n}\rho(x,d_{i_n}) &= \lim_{n}\rho(x',d_{i_n}),\\
        0 &= \rho(x',x),\\
    \end{split}
\end{equation*}
i.e. $x=x'$. We conclude then that $(h_i)_{i=1}^\infty$ is a separating family for $(\X,\tau)$. This example shows, as anticipated above, that in particular Polish spaces (i.e. separable completely metrizable topological spaces) are quasi-Polish.
\end{example}

\begin{example}[Fr\'echet spaces carrying a Schauder basis]\label{ex: Frechet spaces carrying a Schauder basis} 
We consider now a Fr\'echet space $(\X,\tau)$, whose topology is induced by an increasing sequence $\{p_n;\,n\in\N\}$ of seminorms on $\X$, and with associated metric 
\begin{equation*}
    \rho(x,y):= \sum_{n=1}^\infty 2^{-n}\frac{p_n(x-y)}{1+p_n(x-y)},\quad x,y\in\X.
\end{equation*}
We assume that $\X$ carries a Schauder basis $(s_k)_{k=1}^\infty\subset \X$, and let $(\beta_k^\X)_{k=1}^\infty$ be the associated sequence of continuous linear projectors. We notice that not all Fr\'echet spaces carry a Schauder basis, and, if it is the case, then the space $(\X,\rho)$ is separable, a fact that would make this example fall into Example \ref{ex: Separable metrizable spaces}. Nonetheless, in the present setting we can provide a different separating sequence $(h_i)_{i=1}^\infty$ which may be relevant for applications, by leveraging the Schauder basis $(s_k)_{k=1}^\infty$. Indeed, from the very definition of the continuous linear functionals $\beta_k^\X$ we see that, given $x,y\in\X$ with $x\neq y$ there must exist an index $k_0$ for which $\beta_{k_0}^\X(x)\neq \beta_{k_0}^\X(y)$, and thus $(\beta_k^\X)_{k=1}^\infty$ supplies a separating sequence. Besides, we immediately see that the present argument actually holds more generally if $\X$ is only assumed to be a topological vector space carrying a Schauder basis. (See also Example \ref{ex: Separable normed space with the weak topology} below).

\end{example}

\begin{example}[Separable normed spaces]\label{ex: Separable normed spaces} This example too would fit into the framework on separable metrizable spaces (Example \ref{ex: Separable metrizable spaces}). Nonetheless, it is also instructive to consider it here. Let $(\X,\norm{\cdot}_\X)$ be 
a separable normed space (with topological dual $\X'$). Then in this case a separating sequence of functions can be provided in the following way. Let 
$\{\phi_i\}_{i\in\N}\subset \X'$ be such that
\begin{equation*}
	\norm{x}_\X=\sup_i \, \langle \phi_i,x\rangle,
	\quad x\in \X. 
\end{equation*}
Given $x_1\neq x_2$, we choose an integer $m$ such that 
$\langle \phi_m,x_1-x_2\rangle > \frac12\norm{x_1-x_2}_\X>0$. Hence 
$\langle\phi_m,x_1\rangle>\langle\phi_m,x_2\rangle$ and we conclude.
\end{example}

\begin{example}[Separable normed space with the weak topology]\label{ex: Separable normed space with the weak topology} Let $(\X,\norm{\cdot}_\X)$ be  again a separable normed space. Let us endow $\X$ with its weak topology $\sigma(\X,\X')$, i.e. the coarsest topology on $\X$ which makes all the elements of $\X'$ continuous. The last example then shows that $(\X,\sigma(\X,\X'),(\phi_i)_{i=1}^\infty)$ is also a quasi-Polish space. We would like to recall here that when $\X$ has infinite dimension, $(\X,\sigma(\X,\X'))$ is never metrizable: therefore, this constitutes a first example of a non-metric space that admits a sequence of functions that satisfies Assumption \ref{assumption}.

\end{example}

\begin{example}[Weak-star topology]\label{ex: Weak star topology} 
If $(\X,\norm{\cdot}_\X)$ is a separable normed space, then its dual $\X'$ endowed with the weak-star topology $\sigma(\X',\X)$ is quasi-Polish. To see this, take an arbitrary countable dense subset $D\subset \X, D=\{d_1,d_2,\dots\}$. Given $\phi_1,\phi_2\in \X', \phi_1\neq \phi_2$, there 
must exist $d_n\in D$ such that $\phi_1(d_n)\neq \phi_2(d_n)$, 
because, if this were not the case, then $\phi_1(d)=\phi_2(d)$, $d\in D$, and thus $\phi_1\equiv \phi_2$. Define $h_n:\X'\to\R$, $\phi\mapsto\phi(d_n)$, $n\in \N$. 
Then $h_n$ is $\sigma(\X',\X)$-continuous, and we conclude that $(\X',\sigma(\X',\X))$ is quasi-Polish, 
with a separating sequence provided by $\{h_n\}_{n\in\N}$. As a bonus, we also see that $\X'$ endowed with its strong topology is also a quasi-Polish space.

An example of this is the space of signed Radon measure (with a separable pre-dual), equipped with the weak-star topology: more precisely, given a locally compact second countable topological Hausdorff space $(Z,\tau_Z)$, then $\X= C_0(Z)$ equipped with the uniform norm is separable. (Indeed by \cite[Thm 5.3]{kechris}, we have that its one-point compactification $\Hat{Z}$ is metrizable, and hence second countable. Thus, $C(\Hat{Z})$ with the uniform norm is separable, and hence also its subspace $C_0(Z)$ must be separable.) Therefore the dual of $\X$ equipped with either the weak-star topology or with the strong topology is quasi-Polish. 

Another example is given by $L^\infty(\Omega, \mathcal{A},\mu)$ equipped with either the weak-star topology or the strong one, where $(\Omega,\mathcal{A},\mu)$ is a separable\footnote{We recall that the concept of being separable means that $\mathcal{A}$ is separable when viewed as a metric space with metric $\rho(A_1,A_2):=\mu[A_1 \triangle A_2]$. Any sigma-algebra generated by a countable collection of sets is separable, but the converse need not hold.} measure space and $\mu$ is assumed $\sigma$-finite.

We remark that also in the present case the whole space $(\X',\sigma(\X',\X))$ is never metrizable if $\X$ is infinite dimensional. More in general, if $(\X,\tau)$ is an arbitrary separable topological vector space, the same argument above shows that $(\X',\sigma(\X',\X))$ is a quasi-Polish space.

\end{example}

\begin{example}[Countable Cartesian product]\label{ex: Countable Cartesian product}
Given a collection of quasi-Polish spaces $(\X_m,\tau_m,(h_i^{(m)})_{i=1}^\infty)$ with $m\in\N$, it is straightforward to see that their topological product $\X:= \prod_m\X_m$ is quasi-Polish. Indeed, by denoting $\pi_m:\X\to \X_m$ the $m$-th canonical projection, the countable family $\{h^{(m)}_i\circ \pi_m;\, m\in\N, i\in\N\}$ provides a separating sequence.

\end{example}

\begin{example}[Space of bounded linear operators]\label{ex: Space of bounded linear operators}
Given two separable Banach spaces $(E,\norm{\cdot}_E)$ and $(B,\norm{\cdot}_B)$, let $\X=\mathcal L(E,B)$ be the Banach space of bounded linear operators from $E$ to $B$, endowed with the operator norm $\norm{\cdot}_{op}$. This space in general is non-separable. However, also in the present case, we can show that $\X$ is quasi-Polish. Let $(e_n)_n$ and $(b_m)_m$ be two dense sequences for $E$ and $B$ respectively. Define
\[
h_{n,m}:\X\to\R,\quad L\stackrel{h_{n,m}}{\longmapsto}
 \norm{Le_n - b_m}_B,\quad n,m\in\N.
\]
By means of the reverse triangle inequality, it is immediate to see that
\[
\abs{h_{n,m}(L) - h_{n,m}(\tilde{L})} \le \norm{Le_n - b_m - \tilde{L}e_n + b_m}_B \le \norm{L-\tilde{L}}_{op}\norm{e_n}_E,\quad L,\tilde{L}\in\X,
\]
showing that the maps $h_{n,m}$ are Lipschitz. Besides, suppose that $h_{n,m}(L)=h_{n,m}(\tilde{L})$ for all $n$ and $m$. By Example \ref{ex: Separable metrizable spaces}, we infer that $Le_n=\tilde{L}e_n$ for all $n$, and thus by continuity that $L=\tilde{L}$, i.e. the functions $h_{n,m}$ separate points, turning $\X$ into a quasi-Polish space.

Alternatively, by means of Example \ref{ex: Separable normed spaces}, another separating sequence is provided by
\[
h_{n,m}:\X\to \R,\quad L\stackrel{h_{n,m}}{\longmapsto}
 \langle \phi_m,Le_n\rangle ,\quad n,m\in\N,
\]
where $\{\phi_m\}_{m\in\N}\subset B'$ is a norm-replicating sequence.

\end{example}

\begin{example}[$C_b(\R^d)$]\label{ex: Cb}
 Consider $C_b(\R^d)$, namely the space of bounded and continuous functions on $\R^d$, endowed with its natural norm $\norm{\cdot}_\infty$. It is very well-known that it is not separable, and therefore not a Polish space. Nevertheless, since an element $u\in C_b(\R^d)$ is fully specified by the values it takes on $\mathbb Q^d$, the family of linear and continuous functionals $\{\delta_q;\, q\in\mathbb Q^d\}$ provides us with a separating sequence, making $(C_b(\R^d),\norm{\cdot}_\infty)$ a quasi-Polish space.
    
\end{example}

\begin{example}[$L^\infty(\Omega,\mathcal{A},\mu)$]\label{ex: L infinity}
The following example has been already discussed in Example \ref{ex: Weak star topology}; nevertheless, it has the merit of providing a more explicit construction of a separating sequence. Consider a measure space $(\Omega,\mathcal{A},\mu)$ such that
\begin{enumerate}
    \item $\mu$ is $\sigma$-finite, and therefore there exist $C_1\subset C_2\subset \cdots \subset C_n \subset \cdots,$ with $C_n\in\mathcal{A}$, $\mu[C_n]<\infty$ for all $n$ and $\Omega = \bigcup_{n\in\N}C_n$;
    \item $\mathcal{A}=\sigma(\mathcal E)$, where $\mathcal{E}$ is a countable $\pi$-system such that $\Omega$ is a finite or countable union of elements of $\mathcal{E}$.
\end{enumerate}
Let $\X:=L^\infty(\Omega,\mathcal{A},\mu)$ be endowed with its natural norm: as recalled above, in general $\X$ is not separable, and therefore it cannot be Polish. However, it is a quasi-Polish space, as the following computations will reveal. For $E\in\mathcal{E}$ and $n\in \N$ we consider the continuous (linear) functions
\[
\chi_{E,n}: \X \to \R,\quad u \mapsto \int_E I_{C_n}(\omega)u(\omega)\,\mu(d\omega).
\]
Suppose that there exist $u,v\in \X$ with $u\neq v$ such that $\chi_{E,n}(u)=\chi_{E,n}(v)$ for any $E\in\mathcal{E}$ and $n\in\N$. For fixed $n$, since $\mathcal{E}$ is a $\pi$-system such that $\Omega$ is a finite or countable union of elements of $\mathcal{E}$, a standard result from measure theory (see e.g. Thm 16.10 \cite{Billingsley}) implies that there exists $A_n\in\mathcal{A}$ such that
\[
\mu[A_n^c] =0, \quad I_{C_n}(\omega)u(\omega)=I_{C_n}(\omega)v(\omega)\quad \text{for } \omega\in A_n.
\]
Define the full measure set $A:=\bigcap_{n\in\N}A_n\in\mathcal{A}$. Given $\omega\in A$, there exists $n\in\N$ such that $\omega\in C_n$, namely $I_{C_n}(\omega)=1$. But $\omega\in A_n$ as well, and so
\[
I_{C_n}(\omega)u(\omega)=I_{C_n}(\omega)v(\omega),
\]
i.e. $u(\omega)=v(\omega)$, namely $u=v$ $\mu$-a.e., contradicting the fact that $u\neq v$. We conclude that the family $\{\chi_{E,n}; E\in\mathcal{E},n\in\N\}$ separates points; clearly, it is countable, and thus $\X$ is a quasi-Polish space. A particular case is when $(\Omega,\mathcal{A},\mu)=(\R^d,\mathcal{B}(\R^d),\mathcal{L}^d)$, where $\mathcal{L}^d$ is the $d$-dimensional Lebesgue measure. 
\end{example}

\begin{example}[BV functions]\label{ex: BV functions} 
This example easily follows from the previous one. We recall here that given an open subset $\Omega\subset \R^d$, the space of bounded variation functions $BV(\Omega)$ is made out of all the functions $u\in L^1(\Omega)$ for which there exists a finite vector Radon measure $Du:\Omega\to\R^d$ such that 
\[
\int_\Omega u(x) \operatorname{div} \phi(x)\,dx = -\int_\Omega \langle\phi(x),Du(x)\rangle,\quad \phi\in C^1_c(\Omega,\R^d).
\]
This space, endowed with the norm 
\[
\norm{u}_{BV(\Omega)} := \norm{u}_{L^1(\Omega)} + V(u,\Omega),\quad u\in BV(\Omega),
\]
where $V(u,\Omega):=\sup\{\int_\Omega u(x) \operatorname{div} \phi(x)\,dx;\, \phi\in C^1_c(\Omega,\R^d),\,\norm{\phi}_\infty\le 1
\}$, becomes a Banach space, which is not separable. Nonetheless, leveraging the previous example, we can define once more the functionals
\[
\chi_{E}: BV(\Omega) \to \R,\quad u \mapsto \int_E u(x)\,dx,\quad E\in\mathcal{E},
\]
where $\mathcal{E}$ is a countable $\pi$-system such that $\Omega$ is a countable union of elements of $\mathcal{E}$, and such that $\mathcal{B}(\Omega)=\sigma(\mathcal E)$. Clearly, these functionals are continuous and, arguing exactly as before, they separate points, making $BV(\Omega)$ a quasi-Polish space.
    
\end{example}

\begin{example}[Locally convex spaces with $(\X',\sigma(\X',X))$ separable]\label{ex: lcs with dual separable in the weak star}
Let $(\X,\tau)$ a locally convex space, such that 
\begin{equation}\label{eq: (H)}
    (\X',\sigma(\X',\X))\quad\text{is separable}.
\end{equation}
Then, we claim that $(\X,\tau)$ is quasi-Polish. To see that, let $(\phi_n)_n\subset \X'$ be a dense set (wrt $\sigma(\X',\X)$). We claim that $(\phi_n)_n$ separates points of $\X$. Suppose on the contrary that there exist $x_0\in\X, x_0\neq 0$ such that
\[
\phi_n(x_0)=0,\quad\text{for all } n\in\N.
\]
Fix an arbitrary $\ell\in\X'$ and consider the family of open neighborhoods of $\ell$ given by
\[
\mathcal{U}_\varepsilon(\ell)
=\{
\psi\in\X'; \, \abs{\langle \psi-\ell,x_0\rangle } < \varepsilon
\},\quad \varepsilon>0.
\]
By density of $(\phi_n)_n$, for each of these neighborhoods there must exists $\phi_{n_\varepsilon}$ such that $\phi_{n_\varepsilon}\in \mathcal{U}_\varepsilon(\ell)$, i.e. $\abs{\langle \phi_{n_\varepsilon} - \ell,x_0\rangle}<\varepsilon$, and thus
\[
\abs{\langle\ell,x_0\rangle}<\varepsilon.
\]
We conclude that $\ell(x_0)=0$. But since $\ell$ was arbitrary and the space $\X$ is locally convex, we conclude $x_0=0$, which is a contradiction.

\end{example}

\begin{example}[H\"older spaces] Let $(S,d_S)$ be a compact metric space, and $0<\alpha\le 1$. Let $\X=C^\alpha(S)$ be the space of $\alpha$-H\"older functions endowed with its natural norm, i.e.
\[
\norm{u}_\alpha = \sup_{s\in S}\abs{u(s)} + \sup_{s\neq t}\frac{\abs{u(s)-u(t)}}{d_S(s,t)^\alpha},\quad u\in C^\alpha(S).
\]
It is very well-known that $\X$ is not separable in general. Nonetheless, it is quasi-Polish. Indeed, $S$ is separable, and therefore we can find a numerable dense subset $D=\{s_1,s_2,\dots\}$. Define for $n\in \N$
\[
\delta_{s_n}: \X \to \R,\quad u\mapsto \langle \delta_{s_n},u\rangle = u(s_n).
\]
Clearly, $\delta_{s_n}$ is continuous, because if $\norm{u_k-u}_\alpha\to 0$, then $\sup_{s\in S}\abs{u_k(s)-u(s)}\to 0$. Besides, the family $(\delta_{s_n})_{n\in\N}$ separates points, because if there existed $u_1,u_2\in\X$ with $u_1\neq u_2$ and such that $\langle\delta_{s_n},u_1\rangle=\langle\delta_{s_n},u_2\rangle$ for each $n$, then (since continuous functions are determined by the values they take on a dense subset) it would follow that for any $s\in S$
\[
u_1(s)=\lim_nu_1(s_{n_k}) = \lim_nu_2(s_{n_k})= u(s)
\]
where $s_{n_k}\to s$ and $s_{n_k}\in D$, i.e. $u_1=u_2$, which is a contradiction.

We can expand this example further: consider again the compact metric space $(S,d_S)$ with designated origin $0\in S,\, 0\notin D $, and now a Banach space $(Z,\norm{\cdot}_Z)$ dual of some Banach space $(W,\norm{\cdot}_W)$. $Z$ is endowed with the weak-star topology $\sigma(Z,W)$.
Define 
\[
\X=C^\alpha(S,Z):=\{x: S\to Z;\, \text{continuous and s.t. } \norm{x}_{C^\alpha(S,Z)}<\infty
\},
\]
where
\[
\norm{x}_{C^\alpha(S,Z)}:=\norm{x(0)}_Z + \sup_{s\neq t}\frac{\norm{x(s)-x(t)}_Z}{d_S(s,t)^\alpha}.
\]
Then $(\X,\norm{\cdot}_{C^\alpha(S,Z)})$ is a Banach space: see e.g. \cite{CuchieroSchmockerTeichmann}. Assume now that $W$ is separable, and let $\{v_m\}_m\subset W$ be a dense subset. Define
\[
h_{n,m}:\X\to\R,\quad x\mapsto\langle x(s_n),v_m\rangle,\quad n,m\in\N.
\]
These maps are continuous, because if $x_k\to 0$ in $\X$, then in particular $\norm{x_k(0)}_Z\to 0$ and $\norm{x_k(s_n)-x_k(0)}_Z\to 0$, and thus $\norm{x_k(s_n)}_Z\to 0$ and $\langle x_k(s_n),v_m\rangle\to 0$. Besides, they separate points of $\X$: indeed, assume that there exist $x\neq y$ in $\X$ such that for all $m,n$ it holds
\[
\langle x(s_n),v_m\rangle = \langle y(s_n),v_m\rangle.
\]
The real-valued functions $S\ni s\mapsto \langle x(s),v_m\rangle$ and $S\ni s\mapsto \langle y(s),v_m\rangle$ are continuous by construction, and therefore, by density we infer that 
\[
\langle x(s),v_m\rangle = \langle y(s),v_m\rangle,\quad s\in S,m\in\N.
\]
By density again, we conclude $x=y$, which contradicts $x\neq y$. Therefore, $\X$ is a quasi-Polish space.

Approximation theorems and neural architectures on the non-separable space $C^\alpha(S)$ have already been considered by \cite[page 5]{CuchieroSchmockerTeichmann}. However, the numerical implementation of those architectures looks definitely much more complicated and less efficient than in the present case (refer to see Theorems \ref{thm: first} and \ref{thm: second}), because it requires a second layer of approximation needed to compute integrals of this kind
\[
\int_Sx(s)\nu(ds),\quad x\in C^\alpha(S)
\]
where $\nu$ is an arbitrary finite signed Radon measure on $S$. On the other hand, in the present setting, one essentially needs only to evaluate the input $x$ on a finite subset of the points $s_n$ of the dense set $D$: see Theorems \ref{thm: first} and \ref{thm: second} for more precise details.

\end{example}

\begin{example}[Spaces with ``the approximation of the identity'' property]\label{ex: approx identity property} Let $(\X,\tau)$ be a topological Hausdorff vector space. Suppose to have a sequence $(T_N)_{N\in\N}$ of continuous operators (not necessarily linear) $T_N:\X\to\X$ such that
\begin{itemize}
    \item $T_N(\X)\subset \X_N$, where $\X_N$ is an $N$-dimensional vector subspace,
    \item $T_N(x)\to x$ as $N\to\infty$ for every $x\in\X$.
\end{itemize}
Observe that we are not imposing neither that $\X_N\subset\X_{N+1}$ nor that the convergence is uniform on the compacts of $\X$, nor that the sequence $(T_N)_{N\in\N}$ is equicontinuous (as one usually does in the context of the bounded approximation property for locally convex spaces). Nonetheless, it seems to us that these conditions are the minimal requirements which one should impose in the context of numerical analysis and constructive mathematics in a broad sense, because they allow to replace an infinite-dimensional input $x\in\X$ with an approximation $x_N\simeq x$ that lives in an $N$-dimensional space.

We claim that such a space $\X$ is quasi-Polish. Indeed, for each $N\in\N$, let $\X_N \stackrel{\Phi_N}{\longrightarrow}\mathbb R^N$ be a linear isomorphism with $\Phi_N:=(\Phi_N^{(1)},\dots, \Phi_N^{(N)})$. Define 
\[
L_N:\X\to\R^N,\quad x\mapsto \Phi_N\circ T_N(x),
\]
and consider accordingly the countable family 
\[
\mathcal{S}:=\{
\Phi_N^{(i)}\circ T_N:\X\to\R\,;\, N\in\N,\, i=1,\dots N\}.
\]
These maps are continuous by composition. Suppose now that there exist $x,y\in\X,\,x\neq y$ such that $\Phi_N^{(i)}\circ T_N(x)=\Phi_N^{(i)}\circ T_N(y)$ for all $N\in\N,\, i=1,\dots N$. This clearly entails that $L_N(x)=L_N(y)$ for all $N$, and thus
\[
T_N(x) = T_N(y), \quad N\in\N,
\]
and so in the limit for $N\to\infty$ we obtain $x=y$, which is a contradiction. Therefore, $\mathcal{S}$ is a countable separating family, and $(\X,\tau)$ is a quasi-Polish space.

\end{example}

We conclude this subsection with a final example which highlights that in certain circumstances we can find an embedding $F$ preserving some desirable algebraic properties, as convexity of the subset where we wish to approximate our functions. This is a nice thing thing to have. Here it is how it works,

\begin{example}\label{ex: convexity}
Let $\X$ be a vector space such that there exists a sequence $(\bar{f}_n)_n$ of linear functionals which separate points. Set $\tau:=\tau(\bar{f}_n;\, n\in\N)$ be the coarsest topology which make these linear functionals continuous. In this way, $(\X,\tau)$ becomes a (locally convex) topological vector space which is quasi-Polish. We consider $\mathcal K\subset\Y$ compact and convex. We claim that there exists $f_n:\X\to [0,1/n],\,n\in\N$ continuous and separating points such that, by setting $F=(f_n)_{n=1}^\infty$, it holds that
\[
F(\mathcal{K})  \subset\mathcal{Q}\subset V
\]
is convex.

Indeed, since $\mathcal K$ is convex, it is also connected. Therefore, for each $n\in\N $, $\bar{f}_n(\mathcal K)=[\alpha_n,\beta_n]$ with $-\infty<\alpha_n<\beta_n<\infty$. Let $\delta_n(t):= \frac{t-\beta_n}{\beta_n-\alpha_n} + \frac12,\,t\in\R$. In that way, $$\delta_n([\alpha_n,\beta_n])=[-1/2,1/2].$$
Let $\chi:\R\to[-1,1]$ be continuous, strictly increasing and equal to the identity on $[-1/2,1/2]$. Observe, that trivially, if $0<t<1$ and $\abs{\xi}\le 1/2$, then $\chi(t\xi)=t\xi$. Finally, define for $n\in\N$,
\[
f_n:\X\to [0,1/n],\quad f_n(y):= \frac{1}{2n}(\chi(\delta_n(\Bar{f}_n(y)))+1),
\]
which is still a continuous separating sequence. Let $a,b\in F(\mathcal{K})$ with $a\neq b$ to avoid trivialities. Then there exist unique $y_a,y_b\in\mathcal K$ with $y_a\neq y_b$ and $F(y_a)=a,F(y_b)=b$. Let $0<t_a,t_b<1,\,t_a+t_b=1$. Set $c:=t_aa+t_bb=t_aF(y_a)+t_bF(y_b)\in  V$.

For any given $n$ it holds
\[
\begin{split}
    t_af_n(y_a)&= \frac{1}{2n}(t_a\chi(\delta_n(\bar{f}_n(y_a)))+t_a)\\
    &=\frac{1}{2n}(\chi(t_a\delta_n(\bar{f}_n(y_a)))+t_a)\\
    &=\frac{1}{2n}\left(\chi\left(t_a\frac{\bar{f}_n(y_a)-\beta_n}{\beta_n-\alpha_n}+\frac{t_a}{2}\right)+t_a\right).
\end{split}
\]
Because $0<t_a<1$ and $\frac{\bar{f}_n(y_a)-\beta_n}{\beta_n-\alpha_n}\in[-1,0]$, it follows that $\abs{t_a\frac{\bar{f}_n(y_a)-\beta_n}{\beta_n-\alpha_n}+\frac{t_a}{2}}\leq 1/2$ and hence
\[
t_af_n(y_a) = \frac{1}{2n}\left[\left(t_a\frac{\bar{f}_n(y_a)-\beta_n}{\beta_n-\alpha_n} + \frac{t_a}{2}
\right)+t_a
\right],
\]
and, similarly, $t_bf_n(y_b) = \frac{1}{2n}\left[\left(t_b\frac{\bar{f}_n(y_b)-\beta_n}{\beta_n-\alpha_n} + \frac{t_b}{2}
\right)+t_b
\right]$. Hence,
\[
\begin{split}
t_af_n(y_a)+t_bf_n(y_b)&= \frac{1}{2n}\left[t_a\frac{\bar{f}_n(y_a)-\beta_n}{\beta_n-\alpha_n} + t_b\frac{\bar{f}_n(y_b)-\beta_n}{\beta_n-\alpha_n}
+ \frac{1}{2}
+1
\right]\\
&= \frac{1}{2n}\left[
\frac{\bar{f}_n(t_ay_a+t_by_b)-\beta_n}{\beta_n-\alpha_n}
+\frac12 + 1\right]\\
&=\frac{1}{2n}[\delta_n(\bar{f}_n(t_ay_a+t_by_b))+1
].
\end{split}
\]
Observe that by convexity $t_ay_a+t_by_b\in\mathcal K$, and so $\bar{f}_n(t_ay_a+t_by_b)\in [\alpha_n,\beta_n]$ and therefore $\delta_n(\bar{f}_n(t_ay_a+t_by_b))\in [-1/2,1/2]$. We conclude
\[
\begin{split}
    t_af_n(y_a)+t_bf_n(y_b) &= \frac{1}{2n} [\chi(\delta_n(\bar{f}_n(t_ay_a+t_by_b)))+1]\\
    &= f_n(t_ay_a+t_by_b),\quad n\in \N,
\end{split}
\]
i.e. $c=t_aF(y_a)+t_bF(y_b)=F(t_ay_a+t_by_b)$ with $t_ay_a+t_by_b\in\mathcal K$. Namely, $c\in F(\mathcal K)$.

As a typical application thereof, consider a separable normed space $(\X,\norm{\cdot})$ with $D=\{d_1,d_2,\dots\}\subset \X$ dense subset. Let $$\bar{f}_n:(\X',\sigma(\X',\X))\to \R,\quad \phi \stackrel{\bar{f}_n}{\longmapsto} \langle\phi,d_n\rangle$$ and $\mathcal K\subset \X'$ compact and convex, e.g. $\mathcal{K}=\{\phi\in\X';\,\norm{\phi}_{\X'}\le 1\}$. More specifically, let $(Z,\tau_Z)$ be a locally compact second countable Hausdorff space, and consider $\X:=C_0(Z)$, the space of continuous functions on $Z$ vanishing at infinity. Then, once again, $\X'$ is the space of signed Radon measures, which we endow with $\sigma(\X',\X)$. Consider the $\sigma(\X',\X)$-closed subset 
\[
\mathcal P := \bigcap_{u\in C_0(Z)_+}\{\mu;\, \langle \mu,u\rangle\ge 0
\},
\]
i.e. the ``positive cone'', where clearly $C_0(Z)_+=\{u\in C_0(Z);\, u\ge 0\}$, and 
\[
\mathcal K:=\mathcal{P}\cap \{\mu\in\X';\,\norm{\mu}_{\X'}\le 1\},
\]
i.e. the set of all Radon sub-probability measures $\mu:\mathcal{B}(Z)\to [0,1]$: it is evidently $\sigma(\X',\X)$-compact and convex, and thus the argument above applies.
\end{example}

\subsection{Infinite-dimensional neural networks} \label{subsec: Infinite-dimensional neural networks}

We now describe the neural network architectures from \cite{BDG0} whose scope and domain of applications we are going to significantly expand, and briefly recall the main results we will need.

To this end, we first observe that, even though the architectures from \cite{BDG0} can deal with inputs from an arbitrary Fr\'echet space $E$, in the present context, given the geometric structure of the problem at hand, it will be enough for our purposes to focus only on the relatively easier case when $E$ is the Hilbert space $V=\ell^2(\N)$. As usual, we will write $\langle\cdot,\cdot\rangle$ for the duality in place between $  V'    $ and $V$, with no further specification if no possibility of confusion arise.

In order to define the infinite-dimensional analogue of a neuron, $a^{\top}x +b$ in \eqref{eq: finite-dim neurons} is replaced by an affine function on $V$, the activation function $\sigma : \mathbb{R} \rightarrow \mathbb{R}$ by a function in $C(V,V)$, and the scalar $L$ by a continuous linear form.
For $\phi\in   V'    , A\in \mathcal{L} (V), b\in V$ a neuron, $\mathcal{N}_{\phi,A,b}$ is then defined by 
$$\mathcal{N}_{\phi,A,b}(x)= \langle \phi ,\sigma (Ax +b)\rangle,\quad x\in V,$$
and a one layer neural network is a finite sum of neurons
\[  
\mathcal{N}(x) =
\sum_{j=1}^{J} \mathcal{N}_{\phi_j,A_j,b_j}(x), \quad x\in V.
\]
Compare \eqref{eq: finite-dim nn}. Then, analogously as above, one asks for conditions on $\sigma:V \rightarrow V$ that ensure that 
\[
\mathfrak N(\sigma):=\left\{ \sum_{j=1}^{J} \mathcal{N}_{\phi_j, A_j,b_j};\, 
J\in\N,\phi_j\in   V'    ,A_j\in \mathcal{L}(V) ,b_j \in V  \right\}
\] 
is dense in $C(V)$ under some suitable topology. 

In \cite{BDG0}, the following separating property for the activation function $\sigma$, which can be seen as the infinite-dimensional counterpart to the well known sigmoidal property for functions from $\R$ to $\R$ (see \cite{Cybenko1989}), was introduced:
\begin{definition}{Separating property:}\label{sigmoid}
There exist $0\neq \psi\in   V'    $ and $u_+,u_-,u_0\in V$ such that either $u_+ \notin \Span \{u_0,u_-\}$ or $u_- \notin \Span \{u_0,u_+ \}$ and such that 
\begin{equation}\label{eq: abstract condition on sigma}
\begin{cases}
\lim_{\lambda\to\infty} \sigma(\lambda x) = u_+, \text{ if } x\in \Psi_+\\
\lim_{\lambda\to\infty} \sigma(\lambda x) = u_-, \text{ if } x\in \Psi_-\\
\lim_{\lambda\to\infty} \sigma(\lambda x) = u_0, \text{ if } x\in \Psi_0\\
\end{cases}
\end{equation}
where we have set 
\begin{equation*}
    \Psi_+ =\{ x\in V;\, \langle\psi,x\rangle >0 \}, \quad \Psi_- =\{ x\in V;\, \langle\psi,x\rangle <0 \}
\end{equation*}
and $\Psi_0=\ker(\psi)$.
\end{definition}

We point out that as a very particular case of the separating property one may choose $u_0=u_-=0$ and $u_+\neq 0$ for instance. Then for any $\psi \in   V'    
$ and a function $\beta\in C(\R)$ with $\lim_{\xi\to\infty} \beta(\xi)=1,\;\lim_{\xi\to-\infty} \beta(\xi)=0$ and $\beta(0)=0$, we can define a separating $\sigma : V \rightarrow V$ by:
\begin{equation*}
    \sigma(x) = \beta(\psi(x)) u_+,\quad x\in V.
\end{equation*}

The following result is an immediate consequence of \cite[Thm. 2.3 and 2.8]{BDG0}, and shows the density of $\mathfrak N(\sigma)$ if the activation function $\sigma$ satisfies the separating property.

\begin{theorem}\label{thm: density}
Let $\sigma:V\to V$ be continuous, satisfying \eqref{eq: abstract condition on sigma} and with bounded range $\sigma(V)$. 
Then $ \mathfrak N(\sigma)$ is dense in $C(V)$ when equipped with the topology of uniform convergence on compacts. In other words, given $f\in C(V)$, then, for any compact subset $K$ of $V$, and any $\varepsilon>0$, there exists $ \sum_{j=1}^J \mathcal{N}_{\phi_j,A_j,b_j}\in  \mathfrak N(\sigma)$ with suitable $J\in\N, \phi_j\in   V'    ,A_j\in\mathcal{L}(V)$ and $b_j\in V$ such that
\begin{equation*}\label{approx:prop}
    \sup_{x\in K}\abs{f(x) - \sum_{j=1}^J  \mathcal{N}_{\phi_j,A_j,b_j}(x)}  < \varepsilon.
\end{equation*}
\end{theorem}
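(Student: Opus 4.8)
The plan is to obtain this statement by specializing the general Fréchet-space framework of \cite{BDG0} to the concrete Hilbert space $V=\ell^2(\N)$. First I would observe that $V$, being a separable Hilbert space, is \emph{a fortiori} a Fréchet space: its topology is generated by the single norm $\norm{\cdot}_V$, which trivially yields a countable increasing family of seminorms together with a compatible complete metric. Hence the architecture and the two density results \cite[Thm.~2.3 and 2.8]{BDG0} apply verbatim with $E=V$, and the only remaining task is to check that our hypotheses on $\sigma$ coincide with theirs. The separating property \eqref{eq: abstract condition on sigma} (Definition \ref{sigmoid}), with its distinguished $0\neq\psi\in V'$, the limits $u_+,u_-,u_0$, and the non-degeneracy requirement that $u_+\notin\Span\{u_0,u_-\}$ or $u_-\notin\Span\{u_0,u_+\}$, is exactly the condition imposed there; and the boundedness of the range $\sigma(V)$ supplies the uniform-integrability input those theorems need in order to pass limits inside integrals. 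Invoking the two results then delivers density of $\mathfrak N(\sigma)$ in $C(V)$ for the topology of uniform convergence on compacts.

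For completeness I would also indicate the underlying mechanism, in case the reduction is not read as purely formal. Because $V$ is metrizable, every compact $K\subset V$ is a compact metric space, so it suffices to prove that the restrictions $\{\mathcal N|_K:\mathcal N\in\mathfrak N(\sigma)\}$ are dense in the Banach space $C(K)$ for each such $K$. By Hahn--Banach together with the Riesz representation of $C(K)'$, this is equivalent to showing that any finite signed Radon measure $\mu$ on $K$ annihilating every neuron is the zero measure. Fixing a neuron $\langle\phi,\sigma(Ax+b)\rangle$ and rescaling the affine part to $\lambda(Ax+b)$ with $\lambda\to\infty$, the hypothesis \eqref{eq: abstract condition on sigma} gives the pointwise limits $u_+,u_-,u_0$ on the three sets cut out by the sign of $\psi(Ax+b)$, and the boundedness of $\sigma(V)$ justifies dominated convergence, so that
\[
\int_K\langle\phi,\sigma(\lambda(Ax+b))\rangle\,d\mu(x)\;\longrightarrow\;\langle\phi,u_+\rangle\,\mu(H_+)+\langle\phi,u_-\rangle\,\mu(H_-)+\langle\phi,u_0\rangle\,\mu(H_0),
\]
where $H_+,H_-,H_0$ are the half-spaces and hyperplane determined by $x\mapsto\psi(Ax+b)$. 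As $A\in\mathcal L(V)$, $b\in V$ and $\phi\in V'$ vary, the map $x\mapsto\psi(Ax+b)$ realizes every affine functional on $V$ (since $\psi\neq 0$, a rank-one $A$ suffices), and the non-degeneracy of $\{u_+,u_-,u_0\}$ lets one isolate the value of $\mu$ on arbitrary half-spaces.

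The crux of the matter --- and the step already carried out in \cite{BDG0} --- is the final implication, namely that a finite signed measure on a compact subset of the infinite-dimensional space $V$ which vanishes on all half-spaces must vanish identically. I expect this to be the main obstacle, since the usual finite-dimensional Cybenko argument does not transfer directly to a non-locally-compact ambient space; it is precisely here that the separability of $V$ (equivalently, the metrizability and compactness of $K$) and the careful bookkeeping of \cite{BDG0} are needed. As all of these ingredients are available in our setting, combining \cite[Thm.~2.3 and 2.8]{BDG0} closes the argument.
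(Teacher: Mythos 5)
Your proposal takes essentially the same route as the paper: the paper offers no independent proof, stating the theorem as an immediate consequence of \cite[Thm.~2.3 and 2.8]{BDG0} applied to the Fr\'echet (indeed Hilbert) space $V=\ell^2(\N)$, which is exactly your first paragraph. Your supplementary sketch of the Hahn--Banach/Riesz duality, the $\lambda\to\infty$ rescaling with dominated convergence, and the reduction to measures vanishing on half-spaces is a faithful outline of the mechanism inside \cite{BDG0}, so nothing further is required.
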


Moreover, the following result ensures that one can approximate a given infinite dimensional neural network as above arbitrary well via a neural network that is constructed from finite dimensional maps and which can thus be trained. The result is a very special case of \cite[Prop. 4.1]{BDG0}. 
\begin{proposition}\label{prop: finite dimensional approx, Banach}\label{finite:dim:projection:NN}
Let $(e_k)_{k\in\N}$ be an orthonormal basis for $V$. For each $N\in\N$ let
$$
 \Pi_N:  V \to \Span\{e_1,\dots ,e_N\}
$$
be the orthogonal projection on the first $N$ elements of the basis. Let $\sigma: V \to V $ be Lipschitz. Let $f\in C(V)$, $K\subset V$ compact and $\varepsilon>0$. Assume
\begin{equation*}
    \mathcal N^{\varepsilon} (x) = \sum_{j=1}^J\langle \phi_j,\sigma(A_jx+b_j)\rangle,\quad x\in V
\end{equation*}
with $\phi_j\in   V'    ,A_j\in\mathcal{L}(V)$ and $b_j\in V$ such that 
\begin{equation*}
    \sup_{x\in K}\abs{f(x)-\mathcal N^{\varepsilon}(x)}<\varepsilon.
\end{equation*}
Fix $\bar{\varepsilon}>0$. Then there exists $N_\ast=N_\ast(\mathcal N^\epsilon,\bar{\varepsilon})\in\N$ such that for $N\geq N_\ast$
\begin{equation}\label{approx:finite}
    \sup_{x\in K}\abs{f(x)-\sum_{j=1}^J\langle \phi_j\circ\Pi_N,\sigma(
    \Pi_{N}A_j\Pi_{N}x+\Pi_{N}b_j)\rangle}<\varepsilon+\Bar{\varepsilon}.
\end{equation}
\end{proposition}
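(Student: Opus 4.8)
The plan is to control the left-hand side of \eqref{approx:finite} by splitting off the already-estimated error. Writing
\[
\mathcal N^\varepsilon_N(x):=\sum_{j=1}^J\langle \phi_j\circ\Pi_N,\sigma(\Pi_N A_j\Pi_N x+\Pi_N b_j)\rangle,
\]
the triangle inequality gives $\sup_{x\in K}\abs{f(x)-\mathcal N^\varepsilon_N(x)}\le \sup_{x\in K}\abs{f(x)-\mathcal N^\varepsilon(x)}+\sup_{x\in K}\abs{\mathcal N^\varepsilon(x)-\mathcal N^\varepsilon_N(x)}<\varepsilon+\sup_{x\in K}\abs{\mathcal N^\varepsilon(x)-\mathcal N^\varepsilon_N(x)}$. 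Hence it suffices to prove that $\sup_{x\in K}\abs{\mathcal N^\varepsilon(x)-\mathcal N^\varepsilon_N(x)}\to 0$ as $N\to\infty$, and then to choose $N_\ast$ so that this remainder falls below $\bar\varepsilon$ for every $N\ge N_\ast$.

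The single analytic fact I would isolate first is that for any compact set $\mathcal C\subset V$ one has $\sup_{c\in\mathcal C}\norm{c-\Pi_N c}_V\to 0$ as $N\to\infty$. This upgrades the elementary pointwise convergence $\Pi_N c\to c$ to uniform convergence over $\mathcal C$, and it follows by a routine total-boundedness argument: cover $\mathcal C$ by finitely many $\delta$-balls, apply pointwise convergence at their centers, and use the contraction bound $\norm{I-\Pi_N}\le 1$. This is the workhorse that produces uniformity over $K$ below.

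For the remainder I would estimate each of the finitely many summands separately. By definition of composition $\langle\phi_j\circ\Pi_N,z\rangle=\langle\phi_j,\Pi_N z\rangle$, so, setting $y_j:=\sigma(A_jx+b_j)$ and $z_j:=\sigma(\Pi_N A_j\Pi_N x+\Pi_N b_j)$, the $j$-th discrepancy is $\langle\phi_j,y_j-\Pi_N z_j\rangle$, bounded by $\norm{\phi_j}_{V'}\norm{y_j-\Pi_N z_j}_V$. Inserting $\Pi_N y_j$ and using $\norm{\Pi_N}\le 1$ yields
\[
\norm{y_j-\Pi_N z_j}_V\le\norm{(I-\Pi_N)y_j}_V+\norm{y_j-z_j}_V .
\]
The first term vanishes uniformly over $x\in K$ by the above fact applied to the compact image $\{\sigma(A_jx+b_j):x\in K\}$ (compact since $x\mapsto\sigma(A_jx+b_j)$ is continuous). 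For the second term I would use that $\sigma$ is Lipschitz, with constant $L_\sigma$, together with the identity $A_jx-\Pi_N A_j\Pi_N x=(I-\Pi_N)A_jx+\Pi_N A_j(I-\Pi_N)x$, to obtain
\[
\norm{y_j-z_j}_V\le L_\sigma\big(\norm{(I-\Pi_N)A_jx}_V+\norm{A_j}_{op}\norm{(I-\Pi_N)x}_V+\norm{(I-\Pi_N)b_j}_V\big).
\]

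Each of these three terms tends to zero uniformly in $x\in K$: the first by the key fact applied to the compact set $A_j(K)$; the second because $\sup_{x\in K}\norm{(I-\Pi_N)x}_V\to 0$, again by the key fact applied to $K$; and the third because $b_j$ is fixed. Summing the $J$ uniformly vanishing bounds, weighted by $\norm{\phi_j}_{V'}$, shows $\sup_{x\in K}\abs{\mathcal N^\varepsilon(x)-\mathcal N^\varepsilon_N(x)}\to 0$, and the result follows. The only genuine subtlety, and thus the step I would treat most carefully, is the uniformity over $K$: had $K$ merely been bounded rather than compact, the projections would converge to the identity only pointwise and the suprema above could fail to vanish; it is precisely the compactness of $K$ (and of $A_j(K)$ and of the $\sigma$-images) that rescues the argument.
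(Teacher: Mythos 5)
Your proof is correct. A point of comparison worth making explicit: the paper does not actually prove Proposition \ref{prop: finite dimensional approx, Banach} at all --- it states that the result ``is a very special case of \cite[Prop.\ 4.1]{BDG0}'' and defers entirely to that reference. So your argument supplies precisely the content that the citation stands in for, and it does so along what is surely the intended route: your reduction via the triangle inequality to the remainder bound $\sup_{x\in K}\abs{\mathcal N^\varepsilon(x)-\mathcal N^\varepsilon_N(x)}\to 0$ is exactly the intermediate estimate that the paper later extracts from the cited proof (it reappears as \eqref{eq: corollary of thm: second (1)} in the proof of Theorem \ref{thm: second}, where that uniform remainder bound, not the full statement, is what gets transported to $\X$). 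The individual steps all check out: the workhorse lemma $\sup_{c\in\mathcal C}\norm{c-\Pi_N c}_V\to 0$ for compact $\mathcal C$ (total boundedness plus the contraction bound $\norm{I-\Pi_N}\le 1$, which holds because $I-\Pi_N$ is itself an orthogonal projection); the splitting $\norm{y_j-\Pi_N z_j}_V\le\norm{(I-\Pi_N)y_j}_V+\norm{y_j-z_j}_V$; the operator identity $A_jx-\Pi_N A_j\Pi_N x=(I-\Pi_N)A_jx+\Pi_N A_j(I-\Pi_N)x$; and the compactness of $K$, of $A_j(K)$, and of the image of $K$ under $x\mapsto\sigma(A_jx+b_j)$, each invoked exactly where uniformity is needed. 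The dependence $N_\ast=N_\ast(\mathcal N^\varepsilon,\bar\varepsilon)$ also comes out correctly, since every threshold in your argument involves only the finitely many $\phi_j, A_j, b_j$ and $\bar\varepsilon$, not $f$ or $\varepsilon$. Your closing remark on why compactness (rather than mere boundedness) of $K$ is essential identifies the genuine crux of the proposition.
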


We mention that the function $\mathcal{N}^{\varepsilon}: V \rightarrow \R$, which is required in the proposition above, exists for instance in view of Theorem \ref{thm: density}, as soon as one assumes additionally that $\sigma$ satisfies \eqref{eq: abstract condition on sigma} and has bounded range $\sigma(V)$. 
Observe that if $\sigma$ is Lipschitz continuous, then every $\mathcal{N}\in\mathfrak{N}(\sigma)$ is also Lipschitz continuous. 

We will profit from Theorem \ref{thm: density} and Proposition \ref{prop: finite dimensional approx, Banach} in the rest of the paper.

\section{Quasi-Polish neural networks architectures}\label{sec: Quasi-Polish neural networks architectures} 

In this section, we will rigorously introduce our neural network architectures. To this end, let $\sigma:V\to V$ be a continuous function. Based on this activation function, we then define the space $\mathfrak N(\sigma)$ of the \textbf{$V$-scalar neural networks} as the subspace of $C(V)$ 
\begin{equation}\label{eq: NN on V}
    \mathfrak N(\sigma):=\left\{ \sum_{j=1}^{J} \mathcal{N}_{\phi_j, A_j,b_j};\, 
J\in\N,\phi_j\in   V'    ,A_j\in \mathcal{L}(V) ,b_j \in V  \right\} \subset C(V)
\end{equation}
where $\mathcal{N}_{\phi_j,A_j,b_j}(x)= \langle \phi_j ,\sigma (A_jx +b_j)\rangle,\,x\in V$.

Given a quasi-Polish space $\X$ with associated injection map $F:\X\to\mathcal{Q}\subset V$, we define the space $\mathfrak{N}_{F,\sigma}(\X)$ of \textbf{quasi-Polish scalar neural networks} as 
\begin{equation}\label{eq: NN on X}
    \mathfrak{N}_{F,\sigma}(\X):= \left\{
    \sum_{j=1}^J \mathcal{N}_{\phi_j, A_j,b_j} \circ F;\, 
J\in\N,\phi_j\in   V'    ,A_j\in \mathcal{L}(V) ,b_j \in V 
    \right\} \subset C(\X),
\end{equation}
namely, we pre-compose the elements of $\mathfrak{N}(\sigma)$ with the injection map $F$, and thus a typical element $\mathcal{N}$ of $\mathfrak{N}_{F,\sigma}(\X)$ will look like
\[
\mathcal{N}(x) = \sum_{j=1}^J \langle \phi_j ,\sigma (A_jF(x) +b_j)\rangle,\quad x\in\X,
\]
with $\phi_j\in V',A_j\in\mathcal{L}(V)$ and $b_j\in V$. We remark that $\mathcal{N}\in C(\X)$ because $F:\X\to V$ is continuous.

Finally, given a topological vector space $(E,\tau_E)$, we define the space $\mathfrak{N}_{F,\sigma}(\X,E)$ of \textbf{quasi-Polish vector neural networks}
\begin{equation}\label{eq: NN on X to E}
 \mathfrak{N}_{F,\sigma}(\X,E):= \left\{ \sum_{m=1}^M\mathcal{N}^{(m)}(x)v^{(m)};\,M\in\N, \mathcal{N}^{(m)} \in \mathfrak{N}_{F,\sigma}(\X), v^{(m)}\in E    
    \right\} \subset C(\X)\otimes E , 
\end{equation}
and thus a typical element $\mathcal{N}$ of $\mathfrak{N}_{F,\sigma}(\X,E)$ will look like
\[
\mathcal{N}(x)=\sum_{m=1}^M\sum_{j=1}^{J_m}\langle \phi_j^{(m)},\sigma(A_j^{(m)}F(x) + b_j^{(m)} )
\rangle v^{(m)},\quad x\in\X,
\]
with $\phi_j^{(m)}\in V',A_j^{(m)}\in\mathcal{L}(V)$ and $b^{(m)}_j\in V$.

We will also need a ``finite-dimensional'' variant of these architectures. First of all, for $N\in\N$, we define the operators
\begin{equation}\label{eq: LambdaN}
\mathfrak{N}_{F,\sigma}(\X) \stackrel{\Lambda_N}{\longrightarrow}\mathfrak{N}_{F,\sigma}(\X)  
\end{equation}
in such a way: for a neuron $\mathcal{N}_{\phi,A,b}\circ F\in \mathfrak{N}_{F,\sigma}(\X)$ we set
\[
\Lambda_N(\mathcal{N}_{\phi,A,b}\circ F) = \langle \phi\circ\Pi_N ,\sigma (\Pi_N\circ A\circ\Pi_NF +\Pi_Nb)\rangle,
\]
and then we extend by linearity to the whole $\mathfrak{N}_{F,\sigma}(\X) $. Once again, the operators $\Pi_N$ are the orthogonal projection in $V$ onto $\Span\{e_1,\dots,e_N\}$ with respect to a pre-assigned orthornomal basis $(e_k)_{k\in\N}\subset V$. We observe that, since $\phi\circ\Pi_N\in V', \Pi_N\circ A\circ\Pi_N\in\mathcal{L}(V)$ and $\Pi_Nb\in V$, then indeed $\Lambda_N(\mathcal{N}_{\phi,A,b}\circ F)\in \mathfrak{N}_{F,\sigma}(\X)$, viz. the operator is well-defined. 

We set accordingly 
\begin{equation}\label{eq: finite NN on X}
    \mathfrak{N}_{F,\sigma,N}(\X) := \Lambda_N(\mathfrak{N}_{F,\sigma}(\X) ),\quad N\in\N, 
\end{equation}
and 
\begin{equation}\label{eq: finite NN on X to E}
   \mathfrak{N}_{F,\sigma,N}(\X,E):= \left\{ \sum_{m=1}^M\mathcal{N}^{(m)}(x)v^{(m)};\,M\in\N, \mathcal{N}^{(m)} \in
    \mathfrak{N}_{F,\sigma,N}(\X), v^{(m)}\in E    
    \right\},\quad N\in\N. 
\end{equation}

\section{Main results}\label{sec: Main results}

The following lemma simply re-states the metrizability of compact subsets of quasi-Polish spaces, and shows that the map $F$ induced by a separating sequence $(f_i)_{i\in\N}$ is an isometry between compact subsets of $\X$ and their images in $V$, a fact which will come handy in the following: compare e.g. Subsection \ref{subsec: Universal approximation results for targets that are quasi-Polish}.

\begin{lemma}\label{lemma: metrizability of compacts}
Let $(\X,\tau)$ be a quasi-Polish space, $(f_i)_{i\in\N}$ be a separating sequence, and let $F=(f_1,f_2,\dots)$. Let $\tau_F\subset\tau$ be the topology induced by $F$. Then $\tau_F$ is metrizable and, for any $\tau$-compact subset $\mathcal K\subset \X$, it holds that
\[
F|_\mathcal{K}:(\mathcal{K},\tau_\mathcal{K})\to F(\mathcal K)\subset V
\]
is an isometry, where $\tau_\mathcal{K}:=\tau\cap\mathcal{K}$. The same holds for $(F|_\mathcal{K})^{-1}:F(\mathcal{K})\to(\mathcal{K},\tau_\mathcal{K})$.
\end{lemma}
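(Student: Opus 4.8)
The plan is to produce an explicit metric on $\X$ that induces $\tau_F$ and for which $F$ is \emph{tautologically} distance-preserving, and then to transport this metric onto a $\tau$-compact $\mathcal K$ by exploiting the fact that $\tau$ and $\tau_F$ agree on $\tau$-compact sets (the ``minimal property of compact topologies'' already recorded among the consequences of Assumption \ref{assumption}).

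First I would define the candidate metric directly from $F$. Since $f_i(\X)\subset[0,1/i]$, one has $\sum_i f_i(x)^2\le\sum_i i^{-2}<\infty$, so $F$ genuinely lands in $V$, and I set
\[
d_F(x,y):=\norm{F(x)-F(y)}_V=\Big(\sum_{i=1}^\infty\abs{f_i(x)-f_i(y)}^2\Big)^{1/2},\qquad x,y\in\X.
\]
A priori this is only a pseudometric, but because $(f_i)_{i\in\N}$ separates points the map $F$ is injective, so $d_F$ is a genuine metric. The next step is to identify the topology it induces with $\tau_F$: by definition $\tau_F$ is the initial (coarsest) topology making $F$ continuous into $V$, equivalently the pullback via $F$ of the subspace metric on $F(\X)\subset V$; since $F$ is injective this pullback is exactly the metric topology of $d_F$. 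This already settles the metrizability of $\tau_F$.

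Next I would restrict to a $\tau$-compact $\mathcal K$ and show that $\tau$ and $\tau_F$ coincide there. As $F$ is $\tau$-continuous, the initial topology satisfies $\tau_F\subset\tau$, hence the identity map $(\mathcal K,\tau_\mathcal{K})\to(\mathcal K,\tau_F\cap\mathcal K)$ is a continuous bijection from a compact space onto a Hausdorff space ($\tau_F$ being metrizable is Hausdorff). Such a map is automatically a homeomorphism, so $\tau_\mathcal{K}=\tau_F\cap\mathcal K$. Consequently $(\mathcal K,\tau_\mathcal{K})$ is metrized by the restriction of $d_F$, and $F(\mathcal K)$ is compact in $V$ as the continuous image of a compact set.

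Finally, with $\mathcal K$ carrying the metric $d_F$, the map $F|_{\mathcal K}\colon(\mathcal K,d_F)\to(F(\mathcal K),\norm{\cdot}_V)$ preserves distances exactly, by the very definition $d_F(x,y)=\norm{F(x)-F(y)}_V$; being surjective onto its image and injective (again from the separating property), it is a bijective isometry, whence $(F|_{\mathcal K})^{-1}$ is an isometry too. I do not expect a genuine obstacle here: the statement is essentially a repackaging of the consequences of Assumption \ref{assumption} already noted in the preliminaries. The only point requiring slight care is the identification ``initial topology $=$ pullback-metric topology'' together with the compact-to-Hausdorff homeomorphism step, which is precisely what legitimizes transporting the metric $d_F$ from $\tau_F$ to the a priori finer relative topology $\tau_\mathcal{K}$.
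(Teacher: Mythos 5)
Your proof is correct and follows essentially the same route as the paper: both define the pullback metric $d_F(x,y)=\norm{F(x)-F(y)}_V$, identify its topology with $\tau_F$ (the paper via a net argument, you via the initial-topology/pullback characterization, which is the same content), and then invoke the coincidence of $\tau$ and $\tau_F$ on compact sets to make the isometry claim tautological. The only cosmetic difference is that you spell out the compact-to-Hausdorff homeomorphism step, whereas the paper simply cites it as an already-recorded consequence of Assumption \ref{assumption}.
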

\begin{proof}
    Let us define 
    \[
    d_F:\X\times\X\to[0,\infty),\quad (x,y)\mapsto d_F(x,y)=\norm{F(x)-F(y)}_{V}.
    \]
    In view of the injectivity of $F$, it is immediate to see that $d_F$ is a metric on $\X$. Let us denote $\tau_{d_F}$ the topology induced by $d_F$. We want to show that $\tau_F=\tau_{d_F}$.

    Given an arbitrary net $<x_\alpha>_\alpha\subset \X$ converging to $x\in\X$ with respect to $\tau_F$, we have by definition that $F(x_\alpha)\to F(x)$ in $V$. Therefore, $d_F(x_\alpha,x)\to 0$, namely $x_\alpha\to x$ with respect to $\tau_{d_F}$, and so $\tau_F\supset\tau_{d_F}$. Let now be $<x_\alpha>_\alpha\subset \X$ a net converging to $x\in\X$ with respect to $\tau_{d_F}$: then $F(x_\alpha)\to F(x)$ in $V$, showing that $F$ is $\tau_{d_F}$-continuous. Hence, $\tau_{d_F}\supset\tau_F$.

    Since for $\tau$-compact subsets we know that $\tau_\mathcal{K}=\tau_F\cap\mathcal{K}$ and now also that $\tau_K=\tau_{d_F}\cap\mathcal K$, the statement that $F|_\mathcal{K}$ as well as its inverse are isometries is clear.
\end{proof}

\subsection{Universal approximation theorem for quasi-Polish spaces: the scalar case}

We are now going to state and prove our first main result.

\begin{theorem}\label{thm: first}
Let $(\X,\tau)$ be a quasi-Polish space with injection map $F$. Assume that the activation function $\sigma:V\to V$ is continuous, satisfying the separating property \eqref{eq: abstract condition on sigma} and such that $\sigma(V)$ is bounded. Then $\mathfrak{N}_{F,\sigma}(\X)$ is dense in $C(\X)$ with respect to the topology of uniform convergence on compacts, in the sense that for any $g:\X\to\R$ continuous, any $\mathcal{K}\subset\X$ compact and error $\varepsilon>0$, there exists $\mathcal{G}^\varepsilon\in\mathfrak{N}_{F,\sigma}(\X) $ 
\[
\mathcal{G}^\varepsilon(x)=\sum_{j=1}^J\langle \phi_j,\sigma(A_jF(x) + b_j)\rangle,\quad x\in \X
\]
for suitable $J,\in\N,\phi_j\in  V',A_j\in\mathcal{L}( V),b_j\in V$ such that
\[
\sup_{x\in\mathcal{K}}\abs{ \mathcal{G}^\varepsilon(x) -g(x)}<\varepsilon.
\]
\end{theorem}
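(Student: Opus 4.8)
The plan is to reduce the approximation problem on $\X$ to the density result already available on $V$ (Theorem \ref{thm: density}) by transporting the target function $g$ through the injection map $F$ and then extending it to all of $V$. The whole scheme is ``transport, extend, approximate, pull back''.

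First I would fix $g\in C(\X)$, a $\tau$-compact set $\mathcal{K}\subset\X$ and $\varepsilon>0$. By Lemma \ref{lemma: metrizability of compacts}, the restriction $F|_\mathcal{K}:(\mathcal{K},\tau_\mathcal{K})\to F(\mathcal{K})$ is a homeomorphism (indeed an isometry) onto its image, and $F(\mathcal{K})$ is a compact — hence closed — subset of the metric space $V$. This lets me define the transported function $\tilde{g}:=g\circ(F|_\mathcal{K})^{-1}:F(\mathcal{K})\to\R$, which is continuous on $F(\mathcal{K})$ and satisfies $\tilde{g}(F(x))=g(x)$ for every $x\in\mathcal{K}$.

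Next, since $V$ is metrizable and therefore normal, and $F(\mathcal{K})$ is closed in $V$, the Tietze extension theorem provides a continuous extension $\hat{g}\in C(V)$ with $\hat{g}|_{F(\mathcal{K})}=\tilde{g}$. I would then apply Theorem \ref{thm: density} to $\hat{g}$ on the compact set $K:=F(\mathcal{K})$, obtaining an element $\mathcal{N}=\sum_{j=1}^J\mathcal{N}_{\phi_j,A_j,b_j}\in\mathfrak{N}(\sigma)$ with $\sup_{y\in F(\mathcal{K})}\abs{\hat{g}(y)-\mathcal{N}(y)}<\varepsilon$.

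Finally I would set $\mathcal{G}^\varepsilon:=\mathcal{N}\circ F$, which lies in $\mathfrak{N}_{F,\sigma}(\X)$ by construction. For $x\in\mathcal{K}$ one has $F(x)\in F(\mathcal{K})$, so $g(x)=\tilde{g}(F(x))=\hat{g}(F(x))$, whence $\abs{\mathcal{G}^\varepsilon(x)-g(x)}=\abs{\mathcal{N}(F(x))-\hat{g}(F(x))}<\varepsilon$; taking the supremum over $\mathcal{K}$ closes the argument. Since this is a clean reduction, no single step is difficult in isolation; the only point genuinely requiring care is the legitimacy of invoking Theorem \ref{thm: density}, i.e. that $F(\mathcal{K})$ is truly compact in $V$ — exactly what Lemma \ref{lemma: metrizability of compacts} and the properties of $F$ recorded in the preliminaries guarantee. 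It is worth emphasizing that the use of Tietze's theorem is what renders the quasi-Polish (rather than merely metric) structure immaterial beyond the compact set, since all the analytic work takes place on the metrizable compactum $F(\mathcal{K})$.
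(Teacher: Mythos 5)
Your proposal is correct and follows essentially the same route as the paper's own proof: transport $g$ to $F(\mathcal{K})$ via the homeomorphism $F|_{\mathcal{K}}$, extend by Tietze's theorem (the paper additionally notes the extension can be taken with values in $[a,b]$, but this is immaterial), approximate the extension by Theorem \ref{thm: density}, and pull back by pre-composing with $F$. No further comments are needed.
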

\begin{proof}
Let $g:\X\to\R$ be a continuous function, and $\mathcal{K}\subset\X$ an arbitrary compact subset. Then $g\circ F^{-1}: F(\mathcal{K})\subset V\to\R$ is continuous and $F(\mathcal{K})$ is compact. Besides, $(g\circ F^{-1})(F(\mathcal{K}))\subset [a,b]$ for some $a < b$. Since $ V$ is a metric space, $F(\mathcal{K})$ is closed and we can apply Tietze's extension theorem to find $\mathcal{U}: V\to[a,b]$ continuous extension of the function $g\circ F^{-1}$.

By virtue of Theorem \ref{thm: density}, we can now approximate uniformly $\mathcal{U}$ on $F(\mathcal{K})$ via elements of $\mathfrak{N}(\sigma)$: thus, given $\varepsilon>0$, we can find $\mathcal{N}^\varepsilon\in\mathfrak{N}(\sigma)$ 
$$\mathcal{N}^\varepsilon(z) = \sum_{j=1}^J\langle \phi_j,\sigma(A_jz + b_j)\rangle,\quad z\in V$$
    for suitable $J\in\N,\phi_j\in  V',A_j\in\mathcal{L}( V),b_j\in V$ such that 
\[
    \abs{\mathcal{N}^\varepsilon(z) - \mathcal{U}(z)} < \varepsilon,\quad z\in F(\mathcal{K}),
    \]
i.e.
    \[
    \abs{\mathcal{N}^\varepsilon(z) - (g\circ F^{-1})(z)} < \varepsilon,\quad z\in F(\mathcal{K}).
    \]
Therefore, $\mathcal{G}^\varepsilon:= \mathcal{N}^\varepsilon \circ F\in\mathfrak{N}_{F,\sigma}(\X)$, and for $x\in \mathcal{K}$, it holds
\[
\abs{\mathcal{G}^\varepsilon(x)-g(x)} = \abs{\mathcal{N}^\varepsilon(F(x))-g(x)} =
\abs{\mathcal{N}^\varepsilon(F(x)) - (g\circ F^{-1})(F(x))} <\varepsilon,
\]
i.e. 
\[
\sup_{x\in\mathcal{K}}\abs{\sum_{j=1}^J\langle \phi_j,\sigma(A_jF(x) + b_j)\rangle -g(x)}<\varepsilon.
\]
\end{proof}

The second result is in the same spirit of Proposition \ref{prop: finite dimensional approx, Banach}:

\begin{theorem}\label{thm: second}
 Let $(\X,\tau)$ be a quasi-Polish space with injection map $F$. Assume that the activation function $\sigma:V\to V$ is Lipschitz. Let $(e_k)_{k\in\N}$ be an orthonormal basis for $V$, and for each $N\in\N$ consider the orthogonal projection on the first $N$ elements of this basis
$$
 \Pi_N:  V \to \Span\{e_1,\dots ,e_N\}.
$$
Let $g:\X\to\R$ continuous, $\mathcal{K}\subset\X$ compact and $\varepsilon>0$, and assume there exists $\mathcal{G}^\varepsilon\in\mathfrak{N}_{F,\sigma}(\X)$ 
\[
\mathcal{G}^\varepsilon(x)=\sum_{j=1}^J\langle \phi_j,\sigma(A_jF(x) + b_j)\rangle,\quad x\in\X
\]
for suitable $J\in\N,\phi_j\in  V',A_j\in\mathcal{L}( V),b_j\in V$ such that 
\[
\sup_{x\in\mathcal{K}}\abs{g(x)-\mathcal{G}^\varepsilon(x)}<\varepsilon.
\]
Fix $\bar{\varepsilon}>0$. Then there exists $N_\ast=N_\ast(\mathcal G^\epsilon,\bar{\varepsilon})\in\N$ such that for $N\geq N_\ast$
\begin{equation}\label{eq: second (2)}
\sup_{x\in\mathcal{K}}\abs{\Lambda_N(\mathcal{G}^\varepsilon)(x) - \mathcal{G}^\varepsilon(x)
}<\bar{\varepsilon}.
\end{equation}
and
\begin{equation}\label{eq: second}
\sup_{x\in\mathcal{K}}\abs{g(x)-\Lambda_N(\mathcal{G}^\varepsilon)(x)
}<\varepsilon + \bar{\varepsilon}.
\end{equation}
\end{theorem}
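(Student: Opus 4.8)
The plan is to reduce everything to Proposition \ref{prop: finite dimensional approx, Banach}, exploiting that the injection $F$ turns the compact $\mathcal{K}\subset\X$ into a genuine compact subset of the Hilbert space $V$. First I would record two elementary reductions. By the quasi-Polish properties recalled right after Assumption \ref{assumption} (or by Lemma \ref{lemma: metrizability of compacts}), the set $K:=F(\mathcal{K})$ is compact in $V$ and $F|_{\mathcal{K}}:\mathcal{K}\to K$ is a homeomorphism; consequently, for any quantity that depends on $x$ only through $z=F(x)$, the supremum over $x\in\mathcal{K}$ coincides with the supremum over $z\in K$. Next I would introduce the ``un-pre-composed'' network $\mathcal{N}^\varepsilon(z):=\sum_{j=1}^J\langle\phi_j,\sigma(A_jz+b_j)\rangle\in\mathfrak{N}(\sigma)$, so that $\mathcal{G}^\varepsilon=\mathcal{N}^\varepsilon\circ F$, together with its projected counterpart $\mathcal{N}^\varepsilon_N(z):=\sum_{j=1}^J\langle\phi_j\circ\Pi_N,\sigma(\Pi_NA_j\Pi_Nz+\Pi_Nb_j)\rangle$. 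By the very definition of $\Lambda_N$ one then has $\Lambda_N(\mathcal{G}^\varepsilon)=\mathcal{N}^\varepsilon_N\circ F$.

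With these identifications, estimate \eqref{eq: second (2)} is exactly the assertion $\sup_{z\in K}\abs{\mathcal{N}^\varepsilon(z)-\mathcal{N}^\varepsilon_N(z)}<\bar{\varepsilon}$. I would obtain this by applying Proposition \ref{prop: finite dimensional approx, Banach} on the compact set $K\subset V$ with the choice $f:=\mathcal{N}^\varepsilon$ and the approximating network taken to be $\mathcal{N}^\varepsilon$ itself: then the required hypothesis $\sup_{K}\abs{f-\mathcal{N}^\varepsilon}=0<\bar{\varepsilon}/2$ holds trivially, and running the proposition with tolerance $\bar{\varepsilon}/2$ in place of its $\bar{\varepsilon}$ yields an $N_\ast$ such that $\sup_{z\in K}\abs{\mathcal{N}^\varepsilon(z)-\mathcal{N}^\varepsilon_N(z)}<\bar{\varepsilon}/2+\bar{\varepsilon}/2=\bar{\varepsilon}$ for all $N\ge N_\ast$. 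Transporting back through $z=F(x)$ gives \eqref{eq: second (2)}. Finally, \eqref{eq: second} is immediate from the triangle inequality combined with the standing hypothesis $\sup_{\mathcal{K}}\abs{g-\mathcal{G}^\varepsilon}<\varepsilon$, since $\abs{g(x)-\Lambda_N(\mathcal{G}^\varepsilon)(x)}\le\abs{g(x)-\mathcal{G}^\varepsilon(x)}+\abs{\mathcal{G}^\varepsilon(x)-\Lambda_N(\mathcal{G}^\varepsilon)(x)}<\varepsilon+\bar{\varepsilon}$.

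The only genuine content is the compactness of $K=F(\mathcal{K})$, which is precisely what licenses the passage from $\X$ to $V$ and hence the use of Proposition \ref{prop: finite dimensional approx, Banach}; everything else is bookkeeping. Should one prefer a self-contained argument that does not invoke that proposition as a black box, the main obstacle would instead be the \emph{uniform} (rather than pointwise) control of the projection errors over $K$. Here I would estimate each of the $J$ summands by writing $\langle\phi_j\circ\Pi_N,w\rangle=\phi_j(\Pi_N w)$, using $\norm{\Pi_N}\le 1$, the Lipschitz constant of $\sigma$, and the continuity of $\phi_j$ and $A_j$, and then invoking that $\Pi_N\to\mathrm{Id}$ converges uniformly on every compact subset of $V$ (the $\Pi_N$ being equicontinuous and convergent pointwise). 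The point demanding care is that the relevant sets, namely $K$, $A_j(K)$, and $\{\sigma(A_jz+b_j):z\in K\}$, are all compact as continuous images of $K$, so that this uniform convergence applies to each of them; summing the finitely many vanishing contributions then recovers the bound $\bar{\varepsilon}$.
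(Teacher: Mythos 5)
Your proposal is correct, and its skeleton coincides with the paper's: transport everything to $V$ via $F$ (using that $F(\mathcal{K})$ is compact and $F|_{\mathcal{K}}$ is a homeomorphism), establish the pure projection estimate \eqref{eq: second (2)} at the level of $\mathfrak{N}(\sigma)$, pull back through $z=F(x)$, and finish with the triangle inequality for \eqref{eq: second}. The one genuine difference lies in how the projection estimate is justified. The paper obtains it by ``a careful inspection of the proof of [BDG0, Prop.\ 4.1]'', i.e.\ it appeals to the internals of an external argument to extract the statement that $\sup_{z\in\mathcal{K}_0}\abs{\mathcal{N}_0(z)-\Lambda$-projected version$}<\bar{\varepsilon}$ for $N$ large. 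You instead derive this from the \emph{statement} of Proposition \ref{prop: finite dimensional approx, Banach} alone, via the self-application trick $f:=\mathcal{N}^\varepsilon$: the approximation hypothesis then holds with sup equal to $0$, so running the proposition with both tolerances set to $\bar{\varepsilon}/2$ yields exactly $\sup_{z\in K}\abs{\mathcal{N}^\varepsilon(z)-\mathcal{N}^\varepsilon_N(z)}<\bar{\varepsilon}$. This is a small but real improvement: it makes the proof self-contained relative to results actually stated in the paper, with no black-box appeal to another paper's proof. Your fallback sketch (uniform convergence of $\Pi_N$ to the identity on the compacts $K$, $A_j(K)$ and $\{\sigma(A_jz+b_j):z\in K\}$, combined with $\norm{\Pi_N}\le 1$ and the Lipschitz property of $\sigma$) is also sound, and is essentially what the ``careful inspection'' in the paper's proof amounts to; so you have in effect both the clean reduction and the argument it rests on.
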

\begin{proof}
A careful inspection of the proof of \cite[Prop. 4.1]{BDG0} will reveal that, given a Lipschitz function $\sigma:V\to V$, a compact subset $\mathcal{K}_0$ of $V$ and a neural network architecture $\mathcal{N}_0\in\mathfrak N(\sigma)$, $\mathcal N_0(z)=\sum_{j=1}^J\langle \tilde{\phi}_j,\sigma(\tilde{A}_jz + \tilde{b}_j)\rangle$ then, for any $\bar{\varepsilon}>0$, there exists $N_0=N_0(\mathcal{N}_0,\bar{\varepsilon})\in\N$ such that for all $N\ge N_0$ we have
    \begin{equation}\label{eq: corollary of thm: second (1)}
\sup_{z\in\mathcal{K}_0}\abs{\mathcal N_0(z)-\sum_{j=1}^J\langle \tilde{\phi}_j\circ\Pi_N,\sigma(\Pi_N\circ \tilde{A}_j\circ \Pi_N z + \Pi_N \tilde{b}_j)\rangle }< \Bar{\varepsilon}.
    \end{equation}

In view of this, it is immediate to transport the result now on $\X$ via $F$, because given $\mathcal{G}^\varepsilon\in\mathfrak{N}_{F,\sigma}(\X)$ as in the assumptions, then
\[
\sum_{j=1}^J\langle \phi_j,\sigma(A_jz + b_j)\rangle,\quad z\in V
\]
belongs to $\mathfrak N(\sigma)$, and thus we can apply \eqref{eq: corollary of thm: second (1)} on $\mathcal{K}_0:=F(\mathcal K)$ and we obtain
\[
\sup_{x\in\mathcal{K}}\abs{\Lambda_N(\mathcal{G}^\varepsilon)(x) - \mathcal{G}^\varepsilon(x)
}<\bar{\varepsilon}
\]
for all $N\ge N_\ast$ where $N_\ast(\mathcal{G}^\varepsilon,\Bar{\varepsilon})$ is suitable. The rest is now obvious.  
\end{proof}

\begin{corollary}\label{corr: to second}
    Assume the same settings of Theorem \ref{thm: second} and in addition that the activation function $\sigma:V\to V$ satisfies the separating property \eqref{eq: abstract condition on sigma} and that $\sigma(V)$ is bounded. Then $\bigcup_{N\in\N} \mathfrak{N}_{F,\sigma,N}(\X) $ is dense in $C(\X)$ with respect to the topology of uniform convergence on compacts.
\end{corollary}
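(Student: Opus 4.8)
The plan is to chain together the two preceding theorems, splitting a prescribed total error budget into two equal halves. Fix $g\in C(\X)$, a compact subset $\mathcal{K}\subset\X$, and a target accuracy $\eta>0$. The crucial point is that the hypotheses of the corollary are precisely those needed to run both results in tandem on the \emph{same} data: the separating property \eqref{eq: abstract condition on sigma} together with boundedness of $\sigma(V)$ unlocks Theorem \ref{thm: first}, while the Lipschitz assumption on $\sigma$ unlocks Theorem \ref{thm: second}.

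First I would apply Theorem \ref{thm: first} with error $\varepsilon:=\eta/2$ to produce a genuine (infinite-dimensional) quasi-Polish scalar network $\mathcal{G}^\varepsilon\in\mathfrak{N}_{F,\sigma}(\X)$ satisfying $\sup_{x\in\mathcal{K}}\abs{g(x)-\mathcal{G}^\varepsilon(x)}<\eta/2$. Next I would feed this very $\mathcal{G}^\varepsilon$ into Theorem \ref{thm: second}, choosing the secondary tolerance $\bar{\varepsilon}:=\eta/2$; since $\sigma$ is Lipschitz, this furnishes an index $N_\ast=N_\ast(\mathcal{G}^\varepsilon,\eta/2)\in\N$ such that for every $N\ge N_\ast$ the finite-dimensional projection $\Lambda_N(\mathcal{G}^\varepsilon)$ obeys the estimate \eqref{eq: second}, that is, $\sup_{x\in\mathcal{K}}\abs{g(x)-\Lambda_N(\mathcal{G}^\varepsilon)(x)}<\eta/2+\eta/2=\eta$.

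It then only remains to observe that $\Lambda_N(\mathcal{G}^\varepsilon)\in\mathfrak{N}_{F,\sigma,N}(\X)$ by the very definition \eqref{eq: finite NN on X} of this space as the image $\Lambda_N(\mathfrak{N}_{F,\sigma}(\X))$, and hence $\Lambda_N(\mathcal{G}^\varepsilon)\in\bigcup_{N\in\N}\mathfrak{N}_{F,\sigma,N}(\X)$. Since $g$, $\mathcal{K}$, and $\eta$ were arbitrary, this exhibits, for each prescribed accuracy on each prescribed compact, an element of the union lying within $\eta$ of $g$, which is exactly density in the topology of uniform convergence on compacts.

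I do not expect any genuine obstacle here: the corollary is a formal two-step concatenation, and the only care required is the bookkeeping of the two tolerances $\varepsilon$ and $\bar{\varepsilon}$ so that they add up to the target $\eta$. All the substantive work—the Tietze-extension-plus-density argument behind Theorem \ref{thm: first} and the projection-stability estimate behind Theorem \ref{thm: second}—has already been discharged upstream.
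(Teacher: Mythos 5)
Your proposal is correct and is exactly the argument the paper intends: the corollary is stated without proof precisely because it is the immediate concatenation of Theorem \ref{thm: first} (applied with tolerance $\eta/2$, valid since Lipschitz plus the separating property and bounded range give continuity and the density hypothesis) and Theorem \ref{thm: second} (applied with $\bar{\varepsilon}=\eta/2$), followed by the observation that $\Lambda_N(\mathcal{G}^\varepsilon)\in\mathfrak{N}_{F,\sigma,N}(\X)$ by definition \eqref{eq: finite NN on X}. Your bookkeeping of the two tolerances and the verification that the combined hypotheses unlock both theorems are exactly right.
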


Let us comment on the last results. First of all, the ``weights'' $\phi_j\circ\Pi_N,\Pi_N\circ A_j\circ\Pi_N$ and $\Pi_N\circ b_j$ of the neural architecture $\Lambda_N(\mathcal{G}^\varepsilon)$ are finite-dimensional objects, and hence can now easily be programmed in a computer. We see that for large $N$, it is sufficient to consider the finite dimensional input values $\Pi_N (z)\in V$ instead of $z\in V$, and then successively the restriction of the operators $\Pi_N\circ A_j$ and $\phi_j$ to $\Span\{e_1,\dots ,e_N\}$ instead of the maps $A_j$ and $\phi_j$ for $j=1,\dots , J$. These linear maps $\Pi_N\circ A_j$ and $\phi_j$ are finite dimensional when restricted to $\Span\{e_1,\dots ,e_N\}$, and hence specified by a finite number of parameters. More precisely, the action of $\phi_j$ will be prescribed by the scalars $\phi_j(e_1),\dots,\phi_j(e_N)$ and the action of $\Pi_N\circ A_J\circ\Pi_N$ will be specified by $\left\{ (A_je_m,e_k)_V \right\}_{m,k=1}^N$. The architecture 
\[
\Lambda_N(\mathcal{G}^\varepsilon)(x)=\sum_{j=1}^J\langle \phi_j\circ\Pi_N,\sigma(\Pi_N\circ A_j\circ\Pi_NF(x) + \Pi_Nb_j)\rangle,\quad x\in\X
\]
thus resembles a classical neural network. However, instead of the typical one dimensional activation function, the function $\Pi_N \circ \sigma$ restricted to $\Span\{e_1,\dots ,e_N\}$ is multidimensional. 

Besides, if we now choose the canonical orthonormal basis of $V$, namely $$e_k:=(0,0,\dots, 0,1,0,\dots),\quad k\in \N$$ with entry equal to 1 at the $k$-th slot, then the input $F(x)\in V$ becomes
\[
\Pi_N F(x) = f_1(x)e_1 + \dots + f_N(x)e_N = (f_1(x),\dots,f_N(x),0,0,\dots). 
\]
We learn from here that under the setup of Theorem \ref{thm: second} one does not even need to specify the full separating sequence $(f_i)_{i=1}^\infty$, but that is enough to stop at $N$ sufficiently large. This fact has very deep consequences: let us assume now for instance that $\X$ is a separable Fr\'echet space. A major drawback of the architectures constructed in \cite{BDG0} and in \cite{GKL} is that one is required to find a Schauder basis for $\X$ in order to project down the input $x\in\X$ to a finite-dimensional subspace and construct a neural architecture specified by a finite number of parameters. However, this approach can be quite cumbersome in some applications, if not impossible at all. This is because, first, a Schauder basis cannot exist at all, as Enflo \cite{10.1007/BF02392270} famously showed in the 70s; second, even if it a basis exists, it might be difficult to find an explicit one. However, in the extremely more flexible present framework, since the Fr\'echet space $\X$ is assumed to be separable, it is enough to find a countable dense subset $D\subset\X$ to construct a separating sequence $(f_i)_{i=1}^\infty$ as demonstrated in Example \ref{ex: Separable metrizable spaces}. Besides, in some cases, separability is even superfluous, as Examples \ref{ex: Space of bounded linear operators}, \ref{ex: L infinity}, \ref{ex: BV functions}, and \ref{ex: lcs with dual separable in the weak star} indicate.

\subsection{Universal approximation results for quasi-Polish spaces: the vector-valued case}

In this part, we are going to treat neural network architectures defined on a quasi-Polish space $\X$ and taking values in a vector space $E$ belonging to some specific category of vector spaces. More precisely, we will examine first the class of locally convex spaces (see Proposition \ref{prop: the target is l.c.s}), and later on we will move on to the class of topological vector spaces (see Proposition \ref{prop: the target is t.v.s}), which is the most general class of spaces which combine a linear structure compatible with a topology. As a result, we will be able to approximate continuous functions $g$ from $\X$ to $E$ (=topological vector space) with respect to the topology of uniform convergence on compact subsets. While clearly Proposition \ref{prop: the target is l.c.s} can be seen as sub-case of Proposition \ref{prop: the target is t.v.s}, we have preferred to keep the two results separated: their proofs differ, and the one for the first result is more quantitative in nature (due to the existence of a family of seminorms  $\{p_\lambda;\,\lambda\in\Lambda\}$ generating the topology), which may pave the way for more precise quantitative results and estimates: we leave this kind of questions for future works. Besides, also in this case, we show that the resulting approximating neural network can be replaced with a ``finite-dimensional'' version thereof (namely, specified by a finite number of parameters) if in addition the space $E$ is assumed to admit a pre-Schauder basis: see Proposition \ref{prop: finite-dim approx for E t.v.s} and Corollary \ref{corr: finite-dim approx for E t.v.s} for details.

\subsubsection{The target space is a locally convex space}

Let $(\X,\tau)$ be a Hausdorff topological space and $(E,\tau_E)$ a locally convex space (not necessarily Hausdorff), with a family of seminorms $\{p_\lambda;\,\lambda\in\Lambda\}$ generating its topology. Let us consider $C(\X,E)$
    the space of continuous functions from $\X$ to $E$ endowed with the topology of compact convergence. This means that a 0-neighborhood base for this locally convex topology is given by 
    \[
    \{
    h\in C(\X,E);\,\, \sup_{x\in \mathcal{K}} p_{\lambda_1}(h(x))<\varepsilon_1,\dots,
    \sup_{x\in \mathcal{K}} p_{\lambda_R}(h(x))<\varepsilon_R
    \}
    \]
    for $R\in\N,\lambda_1,\dots,\lambda_R\in\Lambda,\varepsilon_1,\dots,\varepsilon_R>0$ and $\mathcal K$ running among the compact subsets of $\X$. Alternatively, $p_{\mathcal{K},\lambda}(h):=\sup_{x\in\mathcal K}p_\lambda(h(x)),\,h\in C(\X,E)$, with $\mathcal{K}$ compact and $\lambda\in\Lambda$ are a set of continuous seminorms generating this topology. 

    We are ready to state and prove
   
\begin{proposition}\label{prop: the target is l.c.s}
    Let $(\X,\tau)$ be quasi-Polish, $(f_i)_{i\in\N}$ be a separating sequence, and as usual set $F=(f_1,f_2,\dots)$. Let $(E,\tau_E)$ be a locally convex space, with a family of seminorms $\{p_\lambda;\,\lambda\in\Lambda\}$ that generates $\tau_E$. Assume that the activation function $\sigma:V\to V$ is continuous, satisfying the separating property \eqref{eq: abstract condition on sigma} and such that $\sigma(V)$ is bounded. Let $C(\X,E)$ be
    the space of continuous functions from $\X$ to $E$ endowed with the topology of compact convergence. Then $\mathfrak{N}_{F,\sigma}(\X,E)$ is dense in $C(\X,E)$.
\end{proposition}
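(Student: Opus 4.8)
The plan is to reduce the vector-valued statement to the scalar density result of Theorem \ref{thm: first} by first producing an approximation of $g$ inside the algebraic tensor product $C(\X)\otimes E$ and then replacing the scalar factors by networks. Concretely, by definition of the topology of compact convergence it suffices to fix $g\in C(\X,E)$, a compact set $\mathcal{K}\subset\X$, finitely many seminorms $p_{\lambda_1},\dots,p_{\lambda_R}$ and errors $\varepsilon_1,\dots,\varepsilon_R>0$, and to exhibit $\mathcal{N}\in\mathfrak{N}_{F,\sigma}(\X,E)$ with $\sup_{x\in\mathcal{K}}p_{\lambda_r}(g(x)-\mathcal{N}(x))<\varepsilon_r$ for every $r$. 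First I would pass to the image: by Lemma \ref{lemma: metrizability of compacts} the set $F(\mathcal{K})$ is compact in $V$ and $F|_\mathcal{K}$ is a homeomorphism, so $h:=g\circ(F|_\mathcal{K})^{-1}\in C(F(\mathcal{K}),E)$ carries all the relevant information and lets me work in the genuinely metric space $V$.

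Next I would build a tensor approximation of $h$ by a partition-of-unity argument on the compact metric space $F(\mathcal{K})$. Using continuity of $h$ together with compactness, I would cover $F(\mathcal{K})$ by finitely many open balls $B_1,\dots,B_n$ with centers $z_1,\dots,z_n\in F(\mathcal{K})$ so small that $p_{\lambda_r}(h(z)-h(z_i))<\varepsilon_r/2$ for every $z\in B_i\cap F(\mathcal{K})$ and every $r$, choose a continuous partition of unity $\{\rho_i\}_{i=1}^n$ subordinate to this cover, and set $v_i:=h(z_i)\in E$. For $h_1:=\sum_{i=1}^n\rho_i v_i$ one then gets, for $z\in F(\mathcal{K})$,
\[
p_{\lambda_r}(h(z)-h_1(z))\le\sum_{i=1}^n\rho_i(z)\,p_{\lambda_r}(h(z)-h(z_i))<\frac{\varepsilon_r}{2},
\]
since $\rho_i(z)>0$ forces $z\in B_i$ and $\sum_i\rho_i\equiv 1$. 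This is precisely the step that manufactures the tensor structure $\sum_i\rho_i\otimes v_i$ which the scalar density theorems cannot supply on their own.

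Finally I would replace each scalar factor $\rho_i$ by a network. Since $F(\mathcal{K})$ is closed in the metric space $V$, Tietze's theorem extends $\rho_i$ to $\hat\rho_i\in C(V)$, whence $\hat\rho_i\circ F\in C(\X)$; applying Theorem \ref{thm: first} to $\hat\rho_i\circ F$ yields $\mathcal{M}_i\in\mathfrak{N}_{F,\sigma}(\X)$ with $\sup_{x\in\mathcal{K}}\abs{\mathcal{M}_i(x)-\hat\rho_i(F(x))}<\delta$ for a parameter $\delta>0$ at my disposal. Setting $\mathcal{N}(x):=\sum_{i=1}^n\mathcal{M}_i(x)v_i\in\mathfrak{N}_{F,\sigma}(\X,E)$, writing $z=F(x)$ and using $\hat\rho_i(z)=\rho_i(z)$ on $F(\mathcal{K})$, a triangle inequality gives
\[
p_{\lambda_r}(g(x)-\mathcal{N}(x))\le p_{\lambda_r}(h(z)-h_1(z))+\sum_{i=1}^n\abs{\rho_i(z)-\mathcal{M}_i(x)}\,p_{\lambda_r}(v_i)<\frac{\varepsilon_r}{2}+\delta\sum_{i=1}^n p_{\lambda_r}(v_i),
\]
so choosing $\delta<\min_r\varepsilon_r/\big(2+2\sum_{i=1}^n p_{\lambda_r}(v_i)\big)$ closes the estimate simultaneously for all $r$. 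I expect the main obstacle to be exactly this lifting from scalars to $E$: the density results only approximate real-valued maps, so the vector-valued content must be injected by hand, and the partition-of-unity construction that does so leans crucially on the metrizability of compact subsets guaranteed by Lemma \ref{lemma: metrizability of compacts}, as well as on the fact that, with $\Lambda$ possibly uncountable and $E$ possibly non-metrizable, one must still control all of $p_{\lambda_1},\dots,p_{\lambda_R}$ uniformly with a single cover and a single $\delta$.
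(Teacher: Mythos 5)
Your proof is correct, and it follows the same overall skeleton as the paper's: first approximate $g$ on $\mathcal{K}$ by an element of $C(\mathcal{K})\otimes E$, then replace each scalar coefficient by a network via Theorem \ref{thm: first}, and close with the seminorm/triangle-inequality bookkeeping (your single uniform $\delta$ with the ``$+2$'' guard plays the role of the paper's per-component tolerances $\delta_i=\tfrac{1}{2M}\tfrac{\bar\varepsilon}{c_i+1}$). The genuine difference lies in how the tensor-product approximation is obtained. The paper simply cites Shuchat's theorem that $C(\mathcal{K})\otimes E$ is dense in $C(\mathcal{K},E)$ for compact $\mathcal{K}$ and locally convex $E$, whereas you manufacture the approximant by hand: a finite cover of the compact metric space $F(\mathcal{K})$ by small balls, a subordinate partition of unity $\{\rho_i\}$, and the element $\sum_i \rho_i\otimes h(z_i)$ whose coefficients are explicit and whose vectors $v_i=h(z_i)$ are actual values of $g$. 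A second, smaller divergence: the paper applies Theorem \ref{thm: first} on $\mathcal{K}$ viewed as a quasi-Polish space in its own right and then extends the resulting networks to $\X$ (trivially, since $F$ is globally defined), while you Tietze-extend the scalar functions $\rho_i$ to $V$ first and apply Theorem \ref{thm: first} on $\X$ directly; both orderings are legitimate. What your route buys is self-containedness and transparency -- the only nontrivial inputs are Lemma \ref{lemma: metrizability of compacts} and the scalar theorem, and the construction makes visible where the vectors $v_i$ come from; what the paper's route buys is brevity and generality, since Shuchat's result holds for arbitrary compact Hausdorff parameter spaces (no metrizability needed) and the same citation is recycled in the subsequent topological-vector-space case (Proposition \ref{prop: the target is t.v.s}), where seminorms are unavailable and your circled-neighborhood bookkeeping would have to be redone differently.
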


\begin{proof}
    Let $g:\X\to E$ be continuous. We fix once for all a compact subset $\mathcal{K}\subset\X$ and a neighborhood of 0
    \[
    U(p_{\lambda_1},\dots, p_{\lambda_R};\varepsilon_1,\dots ,\varepsilon_R) = \{
    z\in E; \,\, p_{\lambda_1}(z)<\varepsilon_1,\dots ,p_{\lambda_R}(z)<\varepsilon_R
    \}
    \]
    in $E$. From \cite{shuchat,shuchat2}, we know that $C(\mathcal{K})\otimes E$
    is dense in  $C(\mathcal{K},E)$, where $\mathcal{K}$ is clearly endowed with the subspace topology inherited from $\X$. Thus, we may find $\Bar{g}\in C(\mathcal K)\otimes E$ such that 
    \[
    g(x) - \Bar{g}(x) \in U(p_{\lambda_1},\dots, p_{\lambda_R};\varepsilon_1/2,\dots ,\varepsilon_R/2),\quad x\in\mathcal{K}.
    \]
    Such $\Bar{g}$ can be written as $\Bar{g}(x)=\sum_{i=1}^M\Bar{g}^{(i)}(x)v^{(i)}$, with suitable $M\in\N,\Bar{g}^{(i)}\in C(\mathcal{K})$ and $v^{(i)}\in E$.
    Since evidently $\mathcal{K}$ is also a quasi Polish space with the same separating sequence $(f_i)_{i=1}^\infty$ of $\X$, then by Theorem \ref{thm: first} it is possible to find $M$ neural networks $\mathcal{N}^{(1)},\dots,\mathcal{N}^{(M)}\in\mathfrak{N}_{F,\sigma}(\mathcal{K})$,
    \[
    \mathcal{N}^{(i)}(x)=\sum_{h=1}^{H^{(i)}}\langle \phi^{(i)}_h,\sigma (A_h^{(i)}F(x) + b^{(i)}_h)\rangle, \quad x\in \mathcal{K},\, i=1,\dots , M
    \]
    with $\phi^{(i)}_h\in V',\,A_h^{(i)}\in\mathcal{L}( V)$ and $b^{(i)}_h\in V$, such that
    \[
    \abs{\mathcal{N}^{(i)}(x) - \Bar{g}^{(i)}(x)} < \delta_i,\quad x\in\mathcal{K},\,i=1,\dots ,M,
    \]
    where we have set 
    \[
    \delta_i:=\frac{1}{2M}\frac{\Bar{\varepsilon} }{ c_i + 1},\quad i=1,\dots,M,
    \]
    with $\Bar{\varepsilon}:=\min_{j=1,\dots,R}\varepsilon_j,\,c_i:= \max_{j=1,\dots,R} p_{\lambda_j}(v^{(i)}) $.
    
    We define $\mathcal{N}(x)=\sum_{i=1}^M\mathcal{N}^{(i)}(x)v^{(i)},\,x\in\mathcal{K}$; so $\mathcal{N}\in\mathfrak{N}_{F,\sigma}(\mathcal{K},E)$. For each $j=1,\dots,R$ and $x\in\mathcal{K}$, it holds
    \[
    \begin{split}
       p_{\lambda_j}(\mathcal{N}(x)-\Bar{g}(x)) &= p_{\lambda_j}\left(
       \sum_{i=1}^M\mathcal{N}^{(i)}(x)v^{(i)} - \sum_{i=1}^M\Bar{g}^{(i)}(x)v^{(i)}
       \right)\\
       &\le \sum_{i=1}^M\abs{\mathcal{N}^{(i)}(x) - \Bar{g}^{(i)}(x)} p_{\lambda_j}(v^{(i)})\\
       &<\sum_{i=1}^M  \frac{1}{2M}\frac{ \Bar{\varepsilon} }{ c_i + 1}p_{\lambda_j}(v^{(i)}) \\
       &<\frac{1}{2M} \bar{\varepsilon}\sum_{i=1}^M 1\\ 
       &= \frac{1}{2} \min_{j=1,\dots,R}\varepsilon_j.
       \end{split}
    \]
    Therefore, for each $j=1,\dots,R$ and $x\in\mathcal{K}$, it holds
    \[
    \begin{split}
        p_{\lambda_j}(\mathcal{N}(x) - g(x)) &\le p_{\lambda_j}(\mathcal{N}(x) - \Bar{g}(x)) + p_{\lambda_j}(\Bar{g}(x) - g(x)) \\
        &< \frac{1}{2} \min_{j=1,\dots,R}\varepsilon_j + \frac{\varepsilon_j}{2}\\
        &\le \varepsilon_j,
    \end{split}
    \]
    namely, $\mathcal{N}(x)-g(x)\in U(p_{\lambda_1},\dots, p_{\lambda_R};\varepsilon_1,\dots ,\varepsilon_R) $ for each $x\in\mathcal{K}$. Finally, by construction, the neural networks $\mathcal{N}^{(i)}(x)=\sum_{h=1}^{H^{(i)}}\langle \phi^{(i)}_h,\sigma (A_h^{(i)}F(x) + b^{(i)}_h)\rangle$ can be naturally extended to elements of $\mathfrak{N}_{F,\sigma}(\X)$, because the map $F$ is already defined on the whole $\X$. Thus, we conclude that actually $\mathcal{N}\in\mathfrak{N}_{F,\sigma}(\X,E)$ and that $\mathfrak{N}_{F,\sigma}(\X,E)$ is dense in $C(\X,E)$.

\end{proof}

\subsubsection{The target space is a topological vector space}

Let us now generalize the previous result to the topological vector space category. More precisely, consider again $(\X,\tau)$ a Hausdorff topological space and let $(E,\tau_E)$ be a topological vector space (not necessarily Hausdorff). We want to recall here how to endow the space $C(\X,E)$ with the topology of uniform convergence on compact subsets which renders it a topological vector space. On the vector space $E^\X$ (i.e. the set of all maps from $\X$ into $E$), we place the topology of uniform convergence on compact subsets (also known as the compact-open topology), for which a 0-neighborhood base is provided by the subsets
\[
\{
u \in E^\X;\, u(\mathcal K)\subset\mathcal{V}\},
\]
where $\mathcal{K}$ runs among the compact subsets of $\X$ and $\mathcal{V}$ among a 0-neighborhood base in $E$. Clearly, this topology does not depend on the particular choice of the 0-neighborhood base in $E$, and it is translation-invariant. Besides, instead of considering the familiy of all compact subsets of $\X$, we may consider a sub-family $\mathfrak C$ thereof with the property that for any compact $\mathcal{K}$ there exists another compact $\mathcal{K}'\in\mathfrak C$ such that $\mathcal K \subset \mathcal K'$, and the resulting topology on $E^\X$ would not change.

We endow the vector subspace $C(\X,E)$ of $E^\X$ with this topology. Since for any $u\in C(\X,E)$ and $\mathcal{K}$ compact, $u(\mathcal{K})$ is compact, and hence totally bounded (see e.g. \cite{Mangatiana_CompactnessPrinciplesTVS}) and thus bounded (\cite[page 25]{schaefer1999topological}), we conclude from 
\cite[page 79]{schaefer1999topological} that $C(\X,E)$ with the topology of uniform convergence on compacts is a topological vector space.

As a preparation to prove Proposition \ref{prop: the target is t.v.s}, we need this easy lemma: 
\begin{lemma}\label{lemma: change of variable}
    Let $(T,\tau)$ and $(T',\tau')$ be two compact homeomorphic Hausdorff spaces. Let $\varphi:T'\to T$ be a homeomorphism. Let $(E,\tau_E)$ be a topological vector space and consider the topological vector spaces $C(T,E)$ and $C(T',E)$ endowed with the topology of uniform convergence. Then $C(T,E)$ and $C(T',E)$ are isomorphic as topological vector spaces. The same holds for the subspaces $C(T)\otimes E$ and $C(T')\otimes E$. In particular, $C(T)\otimes E$ is dense in $C(T,E)$ if and only if $C(T')\otimes E$ is dense in $C(T',E)$.

\end{lemma}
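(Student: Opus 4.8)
The plan is to show that the pullback operator along $\varphi$ furnishes the desired isomorphism. Concretely, I would set
\[
\Phi: C(T,E)\to C(T',E),\quad \Phi(u):= u\circ\varphi,
\]
together with its candidate inverse $\Psi(w):= w\circ\varphi^{-1}$. Both are well-defined, since $\varphi$ and $\varphi^{-1}$ are continuous and a composition of continuous maps is continuous. Linearity of $\Phi$ and $\Psi$ is immediate from the pointwise vector-space operations on $E$, while the relations $\Psi\circ\Phi=\mathrm{id}_{C(T,E)}$ and $\Phi\circ\Psi=\mathrm{id}_{C(T',E)}$ follow directly from $\varphi^{-1}\circ\varphi=\mathrm{id}_{T'}$ and $\varphi\circ\varphi^{-1}=\mathrm{id}_{T}$. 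Hence $\Phi$ is a linear bijection with linear inverse, and only its bicontinuity requires an argument.

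For that argument I would exploit that $T$ and $T'$ are compact, so that the topology of uniform convergence on compacts coincides with uniform convergence over the whole space: taking the cofinal family $\mathfrak C=\{T\}$ (resp. $\{T'\}$) of compacta, a $0$-neighborhood base of $C(T,E)$ is given by the sets $\{u;\,u(T)\subset\mathcal V\}$, with $\mathcal V$ ranging over a $0$-neighborhood base of $E$. Since $\varphi$ is surjective, $\Phi(u)(T')=u(\varphi(T'))=u(T)$, and therefore $\Phi$ maps $\{u;\,u(T)\subset\mathcal V\}$ into $\{w;\,w(T')\subset\mathcal V\}$; this shows $\Phi$ is continuous at $0$, hence everywhere by linearity and translation-invariance. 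The very same computation with $\varphi^{-1}$ in place of $\varphi$ shows $\Psi=\Phi^{-1}$ is continuous, so $\Phi$ is an isomorphism of topological vector spaces. Because $E$ is only assumed to be a topological vector space (possibly non-Hausdorff and non-locally-convex), it is essential to run this comparison directly on the $0$-neighborhood base rather than through seminorms.

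It remains to restrict to the tensor subspaces and to deduce the density equivalence. For an elementary tensor $a\otimes z$ with $a\in C(T)$ and $z\in E$ one computes $\Phi(a\otimes z)=(a\circ\varphi)\otimes z$ with $a\circ\varphi\in C(T')$; extending by linearity, $\Phi$ carries $C(T)\otimes E$ into $C(T')\otimes E$, and symmetrically $\Psi$ carries $C(T')\otimes E$ into $C(T)\otimes E$, so $\Phi$ restricts to an isomorphism between the two subspaces. Finally, a topological isomorphism carries closures to closures, so $\Phi(\overline{C(T)\otimes E})=\overline{\Phi(C(T)\otimes E)}=\overline{C(T')\otimes E}$ while $\Phi(C(T,E))=C(T',E)$; comparing the two identities yields that $C(T)\otimes E$ is dense in $C(T,E)$ if and only if $C(T')\otimes E$ is dense in $C(T',E)$.

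I expect no serious obstacle here. The entire content is that compactness of $T$ and $T'$ collapses the compact-open topology to ordinary uniform convergence, and that surjectivity of $\varphi$ makes the relevant image sets coincide \emph{exactly}, so the neighborhood comparison is a verbatim set inclusion with no estimate needed. The only genuine care point is to phrase everything through $0$-neighborhood bases, as is forced by the generality allowed for the target space $E$.
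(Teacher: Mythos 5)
Your proposal is correct and follows essentially the same route as the paper: the pullback $\Phi(u)=u\circ\varphi$ with inverse $w\mapsto w\circ\varphi^{-1}$, the observation that compactness of $T$ and $T'$ reduces the compact-open topology to uniform convergence with $0$-neighborhood base $\{u;\,u(T)\subset\mathcal V\}$, restriction to the tensor subspaces, and the density equivalence via the isomorphism. The only cosmetic difference is that you verify continuity by a direct inclusion of basic $0$-neighborhoods (using surjectivity of $\varphi$ and continuity at $0$ plus linearity), whereas the paper runs the same comparison through convergent nets; your closing closure argument also just makes explicit what the paper dismisses as ``now clear.''
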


\begin{proof}
    We consider the operator 
    \[
    \Phi:C(T,E)\to C(T',E),\quad C(T,E)\ni u\stackrel{\Phi}{\mapsto} u\circ \varphi \in C(T',E) 
    \]
    which is clearly well-defined and linear. Let us prove that it is continuous: being $T$ already compact, a 0-neighborhood base for the topology of uniform convergence on compacts can be provided in this case by 
\[
\{
u \in E^T;\, u(T)\subset\mathcal{U}\},
\]
where $\mathcal{U}$ runs among a 0-neighborhood base in $E$. A similar argument holds for $E^{T'}$ as well. So, given a net $<u_\gamma>_\gamma\subset C(T,E)$ converging uniformly to 0, and a 0-neighborhood $\{
v \in E^{T'};\, v(T')\subset\mathcal{U}\}$
for some open 0-neighborhood $\mathcal{U}$ in $E$, we can then find an index $\gamma_0$ such that for $\gamma \succcurlyeq \gamma_0$ it holds $u_\gamma(t)\in \mathcal{U},\,t\in T$, and thus $u_\gamma(\varphi(t'))\in\mathcal{U},\,t'\in T'$. Namely, $u_\gamma\circ\varphi\in \{
v \in E^{T'};\, v(T')\subset\mathcal{U}\} $
if  $\gamma \succcurlyeq \gamma_0$ , i.e. $\Phi u_\gamma\to 0$ uniformly. Besides, injectivity and surjectivity are evident. Let $\Phi^{-1}:C(T',E)\to C(T,E),\,v\stackrel{\Phi^{-1}}{\longmapsto}v\circ \varphi^{-1}$ be the inverse. Continuity is proved exactly as above, and hence,
    \[
    C(T,E)\cong C(T',E).
    \]
    Similarly, by restricting the operator $\Phi$ to the vector subspace $C(T)\otimes E$, it is clear that $\Phi(C(T)\otimes E)\subset C(T')\otimes E$
    and continuity and injectivity are evident. Surjectivity also holds, because given $\sum_ia_i(t')\otimes x_i\in C(T')\otimes E$, we have that its pre-image via $\Phi$ is $$\sum_ia_i(\varphi^{-1}(t))\otimes x_i\in C(T)\otimes E.$$ Continuity of $\Phi^{-1}: C(T')\otimes E \to C(T)\otimes E$ is also easily proven, and in conclusion we obtain
    \[
    C(T)\otimes E \cong C(T')\otimes E.
    \]
    The last claim is now clear.
    
\end{proof}

We are ready to prove

\begin{proposition}\label{prop: the target is t.v.s}

Let $(\X,\tau)$ be quasi-Polish, $(f_i)_{i\in\N}$ be a separating sequence, and $F$ be the associated injection. Assume that the activation function $\sigma:V\to V$ is continuous, satisfies the separating property \eqref{eq: abstract condition on sigma} and such that $\sigma(V)$ is bounded. Let $(E,\tau_E)$ be a topological vector space, and let $C(\X,E)$ be the space of continuous functions from $\X$ to $E$ endowed with the topology of uniform convergence on compacts. Then $\mathfrak{N}_{F,\sigma}(\X,E)$ is dense in $C(\X,E)$.
\end{proposition}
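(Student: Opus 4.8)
The plan is to reduce the topological-vector-space case to the structure already established in the locally convex setting, but replacing the quantitative seminorm estimates (which are no longer available) with a purely topological density argument routed through the compact sets $F(\mathcal K)\subset V$ and Lemma \ref{lemma: change of variable}. The key observation is that density in $C(\X,E)$ for the compact-open topology is a local statement: it suffices to show that for every continuous $g:\X\to E$, every compact $\mathcal K\subset\X$, and every $0$-neighborhood $\mathcal V$ in $E$, there exists $\mathcal N\in\mathfrak N_{F,\sigma}(\X,E)$ with $g(x)-\mathcal N(x)\in\mathcal V$ for all $x\in\mathcal K$. So I would fix such $g,\mathcal K,\mathcal V$ at the outset.

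\emph{First} I would restrict to $\mathcal K$ and move the problem to $V$. Since $\mathcal K$ is $\tau$-compact, Lemma \ref{lemma: metrizability of compacts} gives that $F|_{\mathcal K}:\mathcal K\to F(\mathcal K)\subset V$ is a homeomorphism (indeed an isometry) onto a compact set $T:=F(\mathcal K)\subset V$. Hence $g\circ(F|_{\mathcal K})^{-1}\in C(T,E)$, and by Lemma \ref{lemma: change of variable} (with $T'=\mathcal K$, $\varphi=F|_{\mathcal K}$) the density of $C(T)\otimes E$ in $C(T,E)$ is equivalent to the density of $C(\mathcal K)\otimes E$ in $C(\mathcal K,E)$. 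The crucial input is that $C(\mathcal K)\otimes E$ \emph{is} dense in $C(\mathcal K,E)$ even when $E$ is merely a topological vector space: this is exactly the Shuchat-type density result \cite{shuchat,shuchat2} invoked in Proposition \ref{prop: the target is l.c.s}, which holds in the t.v.s.\ category and does not require local convexity. Applying it, I obtain $\bar g(x)=\sum_{i=1}^M \bar g^{(i)}(x)\,v^{(i)}$ with $\bar g^{(i)}\in C(\mathcal K)$, $v^{(i)}\in E$, such that $g(x)-\bar g(x)\in\tfrac12\mathcal V'$ on $\mathcal K$ for a suitable $0$-neighborhood $\mathcal V'$ to be chosen below.

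\emph{Next} I would approximate the scalar coefficients $\bar g^{(i)}$ by quasi-Polish scalar networks. Since $\mathcal K$ is itself quasi-Polish with the same separating sequence, Theorem \ref{thm: first} furnishes $\mathcal N^{(i)}\in\mathfrak N_{F,\sigma}(\mathcal K)$ with $\sup_{x\in\mathcal K}|\mathcal N^{(i)}(x)-\bar g^{(i)}(x)|<\delta$ for any prescribed $\delta>0$, and setting $\mathcal N(x)=\sum_{i=1}^M\mathcal N^{(i)}(x)\,v^{(i)}$ yields an element of $\mathfrak N_{F,\sigma}(\mathcal K,E)$ that extends canonically to $\mathfrak N_{F,\sigma}(\X,E)$ because $F$ is globally defined. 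The main obstacle — and the place where the t.v.s.\ proof genuinely differs from the locally convex one — is controlling $\mathcal N(x)-\bar g(x)=\sum_i(\mathcal N^{(i)}(x)-\bar g^{(i)}(x))v^{(i)}$ without seminorms. I would handle this using the \emph{radial} and \emph{circled} structure recalled in the Notation subsection: given the fixed finite set $\{v^{(1)},\dots,v^{(M)}\}$ and a circled $0$-neighborhood $\mathcal W\subset\mathcal V$, radiality of $\mathcal W$ (equivalently, boundedness/absorption arguments for finite sets) produces a single $\lambda_0>0$ with $v^{(i)}\in\lambda_0\mathcal W$ for all $i$; then choosing $\delta=\tfrac{1}{M\lambda_0}$ and using that $\mathcal W$ is circled and (after shrinking) closed under the relevant finite sums forces each scaled term $(\mathcal N^{(i)}(x)-\bar g^{(i)}(x))v^{(i)}$ into a small multiple of $\mathcal W$, so that the $M$-fold sum lands in $\tfrac12\mathcal V$. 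I expect the delicate point to be the passage from "each summand is small" to "the sum is in $\mathcal V$", since in a non-locally-convex space a $0$-neighborhood need not absorb finite sums; this is resolved by first choosing, via continuity of addition at $0$, a $0$-neighborhood $\mathcal V'$ with $\underbrace{\mathcal V'+\cdots+\mathcal V'}_{M}\subset\tfrac12\mathcal V$ and then running the circled/radial estimate inside $\mathcal V'$.

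\emph{Finally}, combining $g-\bar g\in\tfrac12\mathcal V'$ and $\bar g-\mathcal N\in\tfrac12\mathcal V'$ via the $M$-fold (in fact $2$-fold here) additivity of $\mathcal V'$ gives $g(x)-\mathcal N(x)\in\mathcal V$ for all $x\in\mathcal K$, which is the desired membership in the basic compact-open $0$-neighborhood. Since $g,\mathcal K,\mathcal V$ were arbitrary, this establishes that $\mathfrak N_{F,\sigma}(\X,E)$ is dense in $C(\X,E)$, completing the proof. The whole scheme mirrors Proposition \ref{prop: the target is l.c.s} structurally; the only substantive upgrade is replacing the seminorm triangle-inequality bookkeeping by the translation-invariant, circled-and-radial neighborhood calculus that is valid in an arbitrary topological vector space.
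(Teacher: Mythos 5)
Your overall scheme does coincide with the paper's proof: reduce to a basic compact-open $0$-neighborhood, approximate $g$ on $\mathcal{K}$ by some $\bar g=\sum_{i=1}^M\bar g^{(i)}v^{(i)}\in C(\mathcal{K})\otimes E$, approximate the scalar coefficients via Theorem \ref{thm: first}, control the finite sum $\sum_i(\mathcal{N}^{(i)}-\bar g^{(i)})v^{(i)}$ by combining circled $0$-neighborhoods with additive chains obtained from continuity of addition, and finally extend to $\X$ since $F$ is globally defined. Your neighborhood bookkeeping is essentially the paper's, although the ``$\tfrac12\mathcal{V}$'' shorthand should be dropped: in a non-locally-convex space one does not have $\tfrac12\mathcal{V}+\tfrac12\mathcal{V}\subset\mathcal{V}$ in general, so the two-level decomposition must be phrased, as the paper does, with a chain $\mathcal{U}+\mathcal{U}\subset\mathcal{V}$ and $\mathcal{W}+\dots+\mathcal{W}\subset\mathcal{U}$ ($M$ times) — a mechanism you do identify correctly.

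The genuine gap is the step where you assert that $C(\mathcal{K})\otimes E$ is dense in $C(\mathcal{K},E)$ ``even when $E$ is merely a topological vector space'', presenting this as the same citation used in Proposition \ref{prop: the target is l.c.s} and claiming it ``does not require local convexity''. That is not a citable fact: the density result invoked in the locally convex case is the classical one, whose standard partition-of-unity proof forms convex combinations of elements of a small neighborhood and therefore uses local convexity of $E$ essentially; for an abstract compact Hausdorff domain and an arbitrary t.v.s.\ target no such theorem is available. This is exactly why the paper does \emph{not} stay on $\mathcal{K}$: it applies \cite[Corollary 1, page 99]{shuchat} to $T=F(\mathcal{K})$, whose hypothesis — that the identity restricted to the compact set is a uniform limit of continuous maps with finite-dimensional range — can be verified there because $T$ sits in the Hilbert space $V$ and the orthogonal projections $\Pi_N$ provide such approximations; only then does Lemma \ref{lemma: change of variable} transport the resulting density $C(F(\mathcal{K}),E)=\overline{C(F(\mathcal{K}))\otimes E}$ back to $C(\mathcal{K},E)$. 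In your write-up the passage to $T$ and Lemma \ref{lemma: change of variable} are set up but end up playing no role, since you claim the density on $\mathcal{K}$ outright; as written this begs the question (were that claim a free black box, the detour through $V$ would be pointless). The fix is to invert your logic: verify Shuchat's finite-rank hypothesis on $F(\mathcal{K})\subset V$, conclude density there, and then use the change-of-variable lemma in the direction $T\to\mathcal{K}$. With that correction, your argument becomes the paper's proof.
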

\begin{proof}
For an arbitrary compact set $\mathcal K \subset\X$, we know that $F(\mathcal K)\subset\mathcal{Q}\subset V$ is homeomorphic to $\mathcal{K}$ via $F$, and so in particular $F(\mathcal K)$ is compact. Since $V$ is a Hilbert space, it follows that the identity map of $V$ restricted to $F(\mathcal K)$ can be uniformly approximated by continuous maps with finite-dimensional range (e.g. by orthogonal projections associated to a orthonormal basis of $ V$). \cite[Corollary 1, page 99]{shuchat} then applies, and so it holds
\[
C(F(\mathcal{K}),E) = \overline{C(F(\mathcal K))\otimes E},
\]
where clearly the closure is taken with respect to the topology of uniform convergence. By means of Lemma \ref{lemma: change of variable}, we then have
\[
C(\mathcal K,E) = \overline{C(\mathcal K)\otimes E}.
\]
By \cite[1.2 page 14]{schaefer1999topological}, we may find a 0-neighborhood base $\mathcal B$ for $E$ with the following
\begin{enumerate}
    \item for any $\mathcal V\in \mathcal B$ there exists $\mathcal U\in \mathcal{B}$ such that $\mathcal U + \mathcal U \subset \mathcal V$,
    \item any $\mathcal V\in \mathcal B$ is radial and circled,
    \item there exists $\lambda\in\R,\,0<\abs{\lambda}<1$ such that $\mathcal V\in \mathcal B$ implies $\lambda\mathcal V\in \mathcal B$.
\end{enumerate}
We are going to work with this base. Let $g\in C(\X,E)$ and $\mathcal{V}\in\mathcal{B}$: then it is possible to find $\mathcal U\in\mathcal{B}$ such that $\mathcal U + \mathcal U \subset \mathcal V$ (in particular, $\mathcal U = \mathcal U + 0 \subset \mathcal V$). Thus, we may choose $\Bar{g}\in C(\mathcal{K})\otimes E$ such that 
\[
g(x)-\Bar{g}(x)\in\mathcal{U},\quad x\in\mathcal{K},
\]
i.e. $(g-\Bar{g})(\mathcal{K})\subset\mathcal{U}$. Such $\Bar{g}$ can be written as $\Bar{g}(x)=\sum_{i=1}^M\Bar{g}^{(i)}(x)v^{(i)}$, with suitable $M\in\N,\Bar{g}^{(i)}\in C(\mathcal{K})$ and $v^{(i)}\in E$. We can now find $\mathcal W\in\mathcal B$ such that
\[
\underbrace{\mathcal W + \dots + \mathcal W}_{M\text{ times}}\subset\mathcal{U}.
\]
This fact easily follows from (1) above. Since the scalar multiplication is continuous, there exist $\delta_i>0,\,i=1,\dots,M$, such that $\delta_iv^{(i)}\in\mathcal W,\,i=1,\dots,M$. 

Our goal now is to approximate each $\Bar{g}^{(i)}$ with suitable neural architectures. Since also $\mathcal{K}$, endowed with the relative topology, is a quasi-Polish space with the same separating sequence $(f_i)_{i\in\N}$ of $\X$, by Theorem \ref{thm: first} it is possible to find $M$ neural networks in $\mathfrak{N}_{F,\sigma}(\mathcal{K})$ like this
\[
    \mathcal{N}^{(i)}(x)=\sum_{h=1}^{H^{(i)}}\langle \phi^{(i)}_h,\sigma (A_h^{(i)}F(x) + b^{(i)}_h)\rangle, \quad x\in \mathcal{K},\, i=1,\dots , M
\]
with $\phi^{(i)}_h\in V',\,A_h^{(i)}\in\mathcal{L}( V)$ and $b^{(i)}_h\in V$, such that
\[
    \abs{\mathcal{N}^{(i)}(x) - \Bar{g}^{(i)}(x)} < \delta_i,\quad x\in\mathcal{K},\,i=1,\dots ,M.
\]
Define $\mathcal{N}(x)=\sum_{i=1}^M\mathcal{N}^{(i)}(x)v^{(i)},\,x\in\mathcal{K}$; so $\mathcal{N}\in\mathfrak{N}_{F,\sigma}(\mathcal{K},E)$. Write the difference between $\Bar{g}(x)$ and $\mathcal{N}(x)$ as
\[
\Bar{g}(x) - \mathcal{N}(x) = \sum_{i=1}^M\left(\Bar{g}^{(i)}(x) -\mathcal{N}^{(i)}(x) \right)v^{(i)},\quad x\in\mathcal{K},
\]
and observe that, since
\[
\left(\Bar{g}^{(i)}(x) -\mathcal{N}^{(i)}(x) \right)v^{(i)} = 
\frac{\Bar{g}^{(i)}(x) -\mathcal{N}^{(i)}(x)}{\delta_i}\cdot \underbrace{\delta_i v^{(i)}}_{\in\mathcal{W}}
\]
and $\abs{\frac{\Bar{g}^{(i)}(x) -\mathcal{N}^{(i)}(x)}{\delta_i}}\le 1$, then $\left(\Bar{g}^{(i)}(x) -\mathcal{N}^{(i)}(x) \right)v^{(i)}\in\mathcal W$ for $x\in\mathcal{K},\,i=1,\dots,M$, because $\mathcal W$ was circled. Therefore, $\Bar{g}(x) - \mathcal{N}(x)\in\mathcal W+\dots + \mathcal W\subset\mathcal{U}$ for $x\in\mathcal K$, and we conclude that
\[
g(x)-\mathcal{N}(x) = g(x) -\Bar{g}(x) + \Bar{g}(x)-\mathcal{N}(x)\in\mathcal{U}+\mathcal{U}\subset\mathcal{V},\quad x\in\mathcal{K}.
\]
Finally, by construction, the neural networks $\mathcal{N}^{(i)}(x)=\sum_{h=1}^{H^{(i)}}\langle \phi^{(i)}_h,\sigma (A_h^{(i)}F(x) + b^{(i)}_h)\rangle$ can be naturally extended to elements of $\mathfrak{N}_{F,\sigma}(\X)$, because the map $F$ is already defined on the whole $\X$. Thus, we conclude that actually $\mathcal{N}\in \mathcal{N}\in\mathfrak{N}_{F,\sigma}(\X,E)$ and that $\mathfrak{N}_{F,\sigma}(\X,E)$ is dense in $C(\X,E)$, because for any $g\in C(\X,E)$, any compact $\mathcal{K}\subset\X$ and any $\mathcal{V}\in\mathcal B$, we have found $\mathcal N\in\mathfrak{N}_{F,\sigma}(\X,E)$ such that
\[
(g-\mathcal{N})(\mathcal{K})\subset\mathcal{V}.
\]

\end{proof}

Similarly to Theorem \ref{thm: second}, also in the present case we can replace the resulting approximating neural network with a ``finite-dimensional'' version thereof. As before, we fix an orthonormal basis $(e_k)_{k\in\N}$ for $V$, and $\Pi_N$ denotes the orthogonal projection onto $\Span\{e_1,\dots ,e_N\}$. We are ready to prove

\begin{proposition}\label{prop: finite-dim approx for E t.v.s}
Let $(\X,\tau)$ be quasi-Polish, $(f_i)_{i\in\N}$ be a separating sequence, and set $F=(f_1,f_2,\dots)$.
Assume that the activation function $\sigma:V\to V$ is Lipschitz continuous, satisfying the separating property \eqref{eq: abstract condition on sigma} and such that $\sigma(V)$ is bounded. Let $(E,\tau_E)$ be a topological vector space, $g:\X\to E$ a continuous function, $\mathcal{K}\subset\X$ compact and $\mathcal{U}_0$ a 0-neighborhood in $E$. Let $\mathcal{N}=\sum_{i=1}^M\mathcal{N}^{(i)}v^{(i)}\in \mathfrak N_{F,\sigma}(\X,E)$ be such that
\[
g(x)-\mathcal{N}(x)\in\mathcal{U}_0,\quad x\in\mathcal{K}.
\]
Let $\mathcal{U}_1$ be a 0-neighborhood in E. Then there exists $\bar{N}\in\N$ such that for $N\ge\Bar{N}$ we have
\[
g(x) - \sum_{i=1}^M \Lambda_N(\mathcal{N}^{(i)})(x)v^{(i)}\in \mathcal{U}_0+\mathcal{U}_1 ,\quad x\in\mathcal{K}.
\]
In other words, $\bigcup_{N\in\N}\mathfrak{N}_{F,\sigma,N}(\X,E)$ is dense in $C(\X,E)$.

\end{proposition}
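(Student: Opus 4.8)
The plan is to combine the scalar finite-dimensional approximation of Theorem \ref{thm: second} with the topological-vector-space absorption argument already employed in the proof of Proposition \ref{prop: the target is t.v.s}. The starting observation is that the quantity to be controlled splits as
\[
g(x) - \sum_{i=1}^M \Lambda_N(\mathcal{N}^{(i)})(x) v^{(i)} = \underbrace{\big(g(x) - \mathcal{N}(x)\big)}_{\in\,\mathcal{U}_0} + \sum_{i=1}^M \big(\mathcal{N}^{(i)}(x) - \Lambda_N(\mathcal{N}^{(i)})(x)\big) v^{(i)},
\]
where the first summand already lies in $\mathcal{U}_0$ by hypothesis. Hence it suffices to force the second summand into $\mathcal{U}_1$ uniformly on $\mathcal{K}$ for all sufficiently large $N$.

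First I would fix the neighborhood structure exactly as in Proposition \ref{prop: the target is t.v.s}: choose a circled $0$-neighborhood $\mathcal{W}$ in $E$ whose $M$-fold sum $\mathcal{W}+\dots+\mathcal{W}$ is contained in $\mathcal{U}_1$, and then, by continuity of scalar multiplication, pick scalars $\delta_i>0$ with $\delta_i v^{(i)}\in\mathcal{W}$ for each $i=1,\dots,M$. Next I would invoke Theorem \ref{thm: second}: each $\mathcal{N}^{(i)}$ belongs to $\mathfrak{N}_{F,\sigma}(\X)$ and $\sigma$ is Lipschitz, so estimate \eqref{eq: second (2)} applies. Taking $\bar{\varepsilon}:=\min_{i}\delta_i$ yields an $\bar{N}\in\N$ such that for $N\ge\bar{N}$ one has $\sup_{x\in\mathcal{K}}\abs{\mathcal{N}^{(i)}(x)-\Lambda_N(\mathcal{N}^{(i)})(x)}<\delta_i$ for every $i$. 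Each term then rewrites as $\tfrac{\mathcal{N}^{(i)}(x)-\Lambda_N(\mathcal{N}^{(i)})(x)}{\delta_i}\cdot\delta_i v^{(i)}$ with a scalar coefficient of modulus at most $1$; since $\mathcal{W}$ is circled and $\delta_i v^{(i)}\in\mathcal{W}$, each term lies in $\mathcal{W}$, their sum lies in $\mathcal{U}_1$, and the displayed decomposition gives membership in $\mathcal{U}_0+\mathcal{U}_1$, as required.

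For the concluding density statement I would chain this with Proposition \ref{prop: the target is t.v.s}. Given $g\in C(\X,E)$, a compact $\mathcal{K}\subset\X$, and a $0$-neighborhood $\mathcal{V}$, I would first choose $\mathcal{U}_0=\mathcal{U}_1=:\mathcal{U}$ with $\mathcal{U}+\mathcal{U}\subset\mathcal{V}$ (property (1) of the base used in that proof), then use Proposition \ref{prop: the target is t.v.s} to produce $\mathcal{N}=\sum_{i=1}^M\mathcal{N}^{(i)}v^{(i)}\in\mathfrak{N}_{F,\sigma}(\X,E)$ with $(g-\mathcal{N})(\mathcal{K})\subset\mathcal{U}_0$, and finally apply the estimate just established to replace $\mathcal{N}$ by the projected architecture $\sum_{i=1}^M\Lambda_N(\mathcal{N}^{(i)})v^{(i)}$, which lies in $\mathfrak{N}_{F,\sigma,N}(\X,E)$ because $\Lambda_N(\mathcal{N}^{(i)})\in\mathfrak{N}_{F,\sigma,N}(\X)$.

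The only delicate point, and the one I would flag as the main obstacle, is the ordering of the quantifiers: the circled neighborhood $\mathcal{W}$ and the scalars $\delta_i$ must be selected \emph{before} $N$, since the threshold $\bar{N}$ delivered by Theorem \ref{thm: second} depends on the $\delta_i$. Beyond the scalar estimate \eqref{eq: second (2)}, there is no genuinely hard analytic step; everything else is the standard scalar-absorption trick in topological vector spaces, carried out here with the finite-dimensional projections $\Lambda_N$ in place of the exact neural networks.
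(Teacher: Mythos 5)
Your proposal is correct and takes essentially the same route as the paper's proof: the same decomposition of $g - \sum_i \Lambda_N(\mathcal{N}^{(i)})v^{(i)}$ into the hypothesis term in $\mathcal{U}_0$ plus the projection error, the same appeal to estimate \eqref{eq: second (2)} of Theorem \ref{thm: second}, and the same circled-neighborhood absorption trick with $\mathcal{W}+\dots+\mathcal{W}\subset\mathcal{U}_1$ and scalars $\delta_i v^{(i)}\in\mathcal{W}$. The only cosmetic differences are that the paper applies Theorem \ref{thm: second} with $\bar{\varepsilon}=\delta_i$ separately for each $i$ and sets $\bar{N}=\max\{N_1,\dots,N_M\}$ (whereas you use $\bar{\varepsilon}=\min_i\delta_i$, which amounts to the same thing), and that you spell out the concluding density statement, which the paper leaves implicit.
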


\begin{proof}
Without loss of generality, we can assume that $\mathcal{U}_1\in\mathcal{B}$, where $\mathcal{B}$ is the 0-neighborhood base provided by \cite[1.2 page 14]{schaefer1999topological}: compare the proof of Proposition \ref{prop: the target is t.v.s}. Then we can find $\mathcal W\in\mathcal B$ such that
\[
\underbrace{\mathcal W + \dots + \mathcal W}_{M\text{ times}}\subset\mathcal{U}_1.
\]
and $\delta_i>0,\,i=1,\dots,M$, such that $\delta_iv^{(i)}\in\mathcal W,\,i=1,\dots,M$, because the scalar multiplication is continuous. By Theorem \ref{thm: second}, equation \eqref{eq: second (2)}, there exist $N_i\in\N,i=1,\dots,M$, such that for $N\ge N_i$ it holds good
\[
\sup_{x\in\mathcal{K}}\abs{ \mathcal{N}^{(i)}(x)-\Lambda_N(\mathcal{N}^{(i)})(x)
}<\delta_i.
\]
Set 
\[
\Bar{N}:=\max\{N_1,\dots,N_M\}.
\]
We may write
\[
\mathcal{N}(x) - \sum_{i=1}^M\Lambda_N(\mathcal{N}^{(i)})(x) v^{(i)} = \sum_{i=1}^M
\frac{ \mathcal{N}^{(i)}(x) -\Lambda_N(\mathcal{N}^{(i)})(x)}{\delta_i}\cdot \underbrace{\delta_i v^{(i)}}_{\in\mathcal{W}}
\]
Then for $N\ge\Bar{N}$ and $x\in\mathcal{K}$, since $\abs{\frac{\mathcal{N}^{(i)}(x) -\Lambda_N(\mathcal{N}^{(i)})(x)}{\delta_i}}\le 1$ and $\mathcal{W}$ is circled, we deduce that $\left(\mathcal N^{(i)}(x) -\Lambda_N(\mathcal{N}^{(i)})(x) \right)v^{(i)}\in\mathcal W$, for $i=1,\dots,M$, and therefore  
\[
\mathcal{N}(x) - \sum_{i=1}^M\Lambda_N(\mathcal{N}^{(i)})(x) v^{(i)}\in\mathcal{U}_1.
\]
We conclude that for $N\ge\Bar{N}$
\[
g(x) - \sum_{i=1}^M\Lambda_N(\mathcal{N}^{(i)})(x) v^{(i)}\in\mathcal{U}_0+\mathcal{U}_1,\quad x\in\mathcal{K}.
\]

\end{proof}

    Also here we see that the scalar neural networks $\Lambda_N(\mathcal{N}^{(i)}),\,i=1,\dots,M$ are ``finite-dimensional'', and therefore readily implementable at the numerical level: refer to the discussion immediately after Theorem \ref{thm: second}. However, in the vectorial case, there is a second layer of approximation, given by the vectors $v^{(i)}$ which apriori can span the whole space $E$: this in general is a drawback, because the space $E$ can be ``very large''. Therefore, if we want to obtain neural architectures that are implementable in practice (namely, fully specified by a finite number of parameters), we are required to impose extra assumptions on the ``size'' of $E$. We accomplish this, by requiring that the topological vector space $(E,\tau_E)$ admits a pre-Schauder basis (and so in particular it is separable). This is a simple consequence of the following lemma:

\begin{lemma}\label{lemma: E admits a pre-Schauder}
Let $(E,\tau_E)$ be a topological vector space carrying a pre-Schauder basis $(s_k)_{k\in\N}\subset E$, and let $\Pi^E_N,\,N\in\N$ be the canonical projections associated to it. Let $(T,\tau)$ be a compact Hausdorff space and $u\in C(T)\otimes E$
\[
u(t)=\sum_{i=1}^Mu_i(t)w_i,\quad t\in T
\]
for some $M\in\N,0\neq u_i\in C(T)$ and $w_i\in E$. Then for any 0-neighborhood $\mathcal{V}$ in $E$, there exists $N_0\in\N$ such that for all $N\ge N_0$ it holds
\[
u(t)-\sum_{i=1}^Mu_i(t)\Pi^E_Nw_i
\in \mathcal{V},\quad t\in T.
\]
   
\end{lemma}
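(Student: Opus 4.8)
The plan is to exploit that the projection $\Pi^E_N$ touches only the finitely many fixed vectors $w_1,\dots,w_M$, so that the whole matter reduces to the elementary convergences $\Pi^E_Nw_i\to w_i$ (which is exactly the defining property of the pre-Schauder basis) combined with a careful treatment of the bounded scalar coefficients $u_i(t)$ and of the finite sum. First I would record the algebraic identity
\[
u(t)-\sum_{i=1}^Mu_i(t)\Pi^E_Nw_i=\sum_{i=1}^Mu_i(t)\left(w_i-\Pi^E_Nw_i\right),\quad t\in T,
\]
which shows that the entire $t$-dependence is carried by the scalars $u_i(t)$, whereas the vectorial part $w_i-\Pi^E_Nw_i$ is independent of $t$ and tends to $0$ in $E$ as $N\to\infty$.

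Next I would pass to the $0$-neighborhood base $\mathcal B$ furnished by \cite[1.2 page 14]{schaefer1999topological}, exactly as in the proof of Proposition \ref{prop: the target is t.v.s}, so that its members are radial and circled and property (1) holds. Without loss of generality I may assume $\mathcal V\in\mathcal B$ (otherwise shrink it first), and then, arguing as in that proof, I pick a circled $\mathcal W\in\mathcal B$ with $\underbrace{\mathcal W+\dots+\mathcal W}_{M\text{ times}}\subset\mathcal V$. Since $T$ is compact and each $u_i$ is continuous, I set $C_i:=\sup_{t\in T}\abs{u_i(t)}<\infty$, noting $C_i>0$ because $u_i\neq 0$.

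The key step is the convergence itself: by the very definition of a pre-Schauder basis one has $\Pi^E_Nw_i\to w_i$ in $E$, hence $w_i-\Pi^E_Nw_i\to 0$, and by continuity of scalar multiplication also $C_i(w_i-\Pi^E_Nw_i)\to 0$; thus there is $N_i\in\N$ with $C_i(w_i-\Pi^E_Nw_i)\in\mathcal W$ for all $N\ge N_i$. Setting $N_0:=\max\{N_1,\dots,N_M\}$, for every $N\ge N_0$, every $t\in T$ and every $i$ I would write
\[
u_i(t)\left(w_i-\Pi^E_Nw_i\right)=\frac{u_i(t)}{C_i}\cdot C_i\left(w_i-\Pi^E_Nw_i\right),
\]
and, since $\abs{u_i(t)/C_i}\le 1$ and $\mathcal W$ is circled, this element lies in $\mathcal W$. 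Summing over $i=1,\dots,M$ and using the chosen inclusion gives $u(t)-\sum_{i=1}^Mu_i(t)\Pi^E_Nw_i\in\mathcal V$ for all $t\in T$, as claimed.

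The only point requiring attention is uniformity in $t$, but this is secured at no extra cost: because the projection acts solely on the fixed vectors $w_i$ and the $t$-dependent scalars are uniformly bounded on the compact set $T$, the circled property of $\mathcal W$ absorbs all the coefficients $u_i(t)/C_i$ simultaneously for every $t$. In particular no equicontinuity argument, nor any finer analysis of $u(T)$, is needed beyond the boundedness of the $u_i$, which makes the lemma essentially immediate once the neighborhood bookkeeping is in place.
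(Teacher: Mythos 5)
Your proof is correct and follows essentially the same route as the paper's: the same decomposition $u(t)-\sum_i u_i(t)\Pi^E_Nw_i=\sum_i u_i(t)(w_i-\Pi^E_Nw_i)$, the same circled neighborhood $\mathcal W$ with $M$-fold sum inside $\mathcal V$, and the same rescaling trick $u_i(t)(w_i-\Pi^E_Nw_i)=\frac{u_i(t)}{\norm{u_i}_\infty}\cdot\norm{u_i}_\infty(w_i-\Pi^E_Nw_i)$ absorbed by circledness. The only cosmetic difference is that the paper phrases the choice of $N_i$ as $w_i-\Pi^E_Nw_i\in\norm{u_i}_\infty^{-1}\mathcal W$ rather than via continuity of scalar multiplication, which is the same step.
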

\begin{proof}
    Clearly, it is enough to work with the 0-neighborhood base $\mathcal{B}$ used in Proposition \ref{prop: the target is t.v.s}. So, let us assume that $\mathcal V\in \mathcal B$: we can find $\mathcal{W}\in\mathcal{B}$ such that
    \[
\underbrace{\mathcal W + \dots + \mathcal W}_{M\text{ times}}\subset\mathcal{V}.
\]
Set $\norm{u_i}_\infty:=\sup_{t\in T}\abs{u_i(t)}\in (0,\infty)$. Since $(s_k)_{k\in\N}$ is pre-Schauder basis, for each $w_i$ there exists $N_i\in\N$ such that, whenever $N\ge N_i$, 
\[
w_i-\sum_{k=1}^N\beta_k^E(w_i)s_k = w_i - \Pi^E_N(w_i) \in \norm{u_i}_\infty^{-1}\mathcal W.
\]
In this way, since $\abs{\frac{u_i(t)}{\norm{u_i}_\infty}}\le 1,\,t\in T$ and $\mathcal W$ is circled, we have
\[
u_i(t)(w_i-\Pi^E_Nw_i) = \frac{u_i(t)}{\norm{u_i}_\infty} \underbrace{ \norm{u_i}_\infty(w_i- \Pi^E_Nw_i)}_{\in\mathcal W}\in\mathcal{W},\quad t\in T,N\ge N_i.
\]
We set $N_0:=\max_{i=1,\dots, M}N_i$ and we conclude that for all $N\ge N_0$
\[
u(t)-\sum_{i=1}^Mu_i(t)\Pi^E_Nw_i=\sum_{i=1}^Mu_i(t)(w_i-\Pi^E_Nw_i)\in\mathcal{V},\quad t\in T.
\]

\end{proof}

Therefore, if in Proposition \ref{prop: finite-dim approx for E t.v.s} we now assume additionally that $E$ admits a pre-Schauder basis $(s_k)_{k\in\N}$, then we can achieve the goal of obtaining neural network architectures specified by a finite number of parameters.

\begin{corollary}\label{corr: finite-dim approx for E t.v.s}
Assume the same setting of Proposition \ref{prop: finite-dim approx for E t.v.s} and that $E$ admits a pre-Schauder basis $(s_k)_{k\in\N}$.
Let $g:\X\to E$ be a continuous function, $\mathcal{K}\subset\X$ compact and $\mathcal{U}_0$ a 0-neighborhood in $E$. Let $\mathcal{N}=\sum_{i=1}^M\mathcal{N}^{(i)}v^{(i)}\in \mathfrak N_{F,\sigma}(\X,E)$ be such that
\[
g(x)-\mathcal{N}(x)\in\mathcal{U}_0,\quad x\in\mathcal{K}.
\]
Let $\mathcal{U}_1$ be a 0-neighborhood in E. Then there exists $\bar{N}\in\N$ such that for $N\ge\Bar{N}$ we have
\[
g(x) - \sum_{i=1}^M\Lambda_N(\mathcal{N}^{(i)})(x)\Pi^E_N v^{(i)}\in \mathcal{U}_0+\mathcal{U}_1 ,\quad x\in\mathcal{K}.
\]

\end{corollary}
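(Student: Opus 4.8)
The plan is to reduce the statement to the two approximation mechanisms already in hand: Proposition \ref{prop: finite-dim approx for E t.v.s}, which projects the domain via $\Lambda_N$, and the target projection $\Pi^E_N$, whose effect on a \emph{fixed} sum is governed by Lemma \ref{lemma: E admits a pre-Schauder}. Working with the Schaefer $0$-neighborhood base $\mathcal{B}$ used in the proof of Proposition \ref{prop: the target is t.v.s}, I would first assume $\mathcal{U}_1\in\mathcal{B}$ and split it, choosing $\mathcal{U}'\in\mathcal{B}$ with $\mathcal{U}'+\mathcal{U}'\subset\mathcal{U}_1$. The guiding telescoping is to write
\[
g(x) - \sum_{i=1}^M\Lambda_N(\mathcal{N}^{(i)})(x)\Pi^E_N v^{(i)} = \Bigl[g(x) - \sum_{i=1}^M\Lambda_N(\mathcal{N}^{(i)})(x)v^{(i)}\Bigr] + \sum_{i=1}^M\Lambda_N(\mathcal{N}^{(i)})(x)\bigl(v^{(i)}-\Pi^E_N v^{(i)}\bigr),
\]
so that the first bracket is precisely the quantity handled by Proposition \ref{prop: finite-dim approx for E t.v.s}, and the second is the genuinely new term generated by the target projection. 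For the bracket I would invoke Proposition \ref{prop: finite-dim approx for E t.v.s} verbatim, but with $\mathcal{U}_1$ replaced by the smaller neighborhood $\mathcal{U}'$; this yields some $\bar{N}_1$ such that for $N\ge\bar{N}_1$ the bracket lies in $\mathcal{U}_0+\mathcal{U}'$ for every $x\in\mathcal{K}$.

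The second term is where the real work lies, and it is also the main obstacle: both of its factors depend on $N$ (the scalar coefficients $\Lambda_N(\mathcal{N}^{(i)})$ and the residuals $v^{(i)}-\Pi^E_N v^{(i)}$), so Lemma \ref{lemma: E admits a pre-Schauder}, which presupposes \emph{fixed} coefficient functions, cannot be applied off the shelf. The remedy I would use is to secure a bound on the coefficients that is uniform in $N$. Since the orthogonal projections satisfy $\norm{\Pi_N}\le 1$ and $\sigma(V)$ is bounded, writing $R:=\sup_{z\in V}\norm{\sigma(z)}_V<\infty$, each neuron of $\Lambda_N(\mathcal{N}^{(i)})$ obeys $\abs{\langle\phi^{(i)}_h\circ\Pi_N,\sigma(\cdots)\rangle}\le\norm{\phi^{(i)}_h}_{V'}R$, whence
\[
\sup_{N\in\N}\ \sup_{x\in\mathcal{K}}\ \abs{\Lambda_N(\mathcal{N}^{(i)})(x)}\ \le\ C_i:=R\sum_{h=1}^{H^{(i)}}\norm{\phi^{(i)}_h}_{V'}<\infty,
\]
a constant independent of both $x$ and $N$ (which one may enlarge so that $C_i>0$). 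With this uniform bound available, I would choose $\mathcal{W}\in\mathcal{B}$ with $\underbrace{\mathcal{W}+\dots+\mathcal{W}}_{M\text{ times}}\subset\mathcal{U}'$; since $(s_k)_{k\in\N}$ is a pre-Schauder basis, $v^{(i)}-\Pi^E_N v^{(i)}\to 0$, so there is $N_i$ with $v^{(i)}-\Pi^E_N v^{(i)}\in C_i^{-1}\mathcal{W}$ for $N\ge N_i$. Exactly as in Lemma \ref{lemma: E admits a pre-Schauder}, writing $\Lambda_N(\mathcal{N}^{(i)})(x)\bigl(v^{(i)}-\Pi^E_N v^{(i)}\bigr)=C_i^{-1}\Lambda_N(\mathcal{N}^{(i)})(x)\cdot C_i\bigl(v^{(i)}-\Pi^E_N v^{(i)}\bigr)$ and using $\abs{C_i^{-1}\Lambda_N(\mathcal{N}^{(i)})(x)}\le 1$ together with the fact that $\mathcal{W}$ is circled, each summand falls into $\mathcal{W}$; hence the whole second term lies in $\mathcal{U}'$ for $N\ge\max_i N_i$.

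Finally I would set $\bar{N}:=\max\{\bar{N}_1,N_1,\dots,N_M\}$ and add the two estimates: for $N\ge\bar{N}$ and $x\in\mathcal{K}$ the full difference lies in $(\mathcal{U}_0+\mathcal{U}')+\mathcal{U}'\subset\mathcal{U}_0+(\mathcal{U}'+\mathcal{U}')\subset\mathcal{U}_0+\mathcal{U}_1$, which is the assertion. The only ingredient beyond mechanically combining Proposition \ref{prop: finite-dim approx for E t.v.s} with Lemma \ref{lemma: E admits a pre-Schauder} is the uniform-in-$N$ control of the scalar outputs $\Lambda_N(\mathcal{N}^{(i)})$, and I expect precisely this to be the crux: without it, the circled-set factoring that tames the simultaneously $N$-dependent target projection would not go through.
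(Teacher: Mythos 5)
Your proof is correct, but it follows a genuinely different route from the paper's. The paper uses a \emph{three}-term telescope, writing $g(x)-\sum_{i=1}^M\Lambda_N(\mathcal{N}^{(i)})(x)\Pi^E_Nv^{(i)}$ as $[g(x)-\mathcal{N}(x)]+\sum_{i=1}^M\mathcal{N}^{(i)}(x)[v^{(i)}-\Pi^E_Nv^{(i)}]+\sum_{i=1}^M[\mathcal{N}^{(i)}(x)-\Lambda_N(\mathcal{N}^{(i)})(x)]\Pi^E_Nv^{(i)}$, so that the coefficients multiplying the vector residuals are the \emph{fixed} networks $\mathcal{N}^{(i)}$, whose sup-bounds $a_i=\sup_{x\in\mathcal{K}}\abs{\mathcal{N}^{(i)}(x)}$ exist by plain compactness; the $N$-dependence in the last term is then tamed not by bounding the scalars uniformly in $N$ but by showing that $\delta_i\Pi^E_Nv^{(i)}=\delta_i(\Pi^E_Nv^{(i)}-v^{(i)})+\delta_iv^{(i)}\in\mathcal{V}+\mathcal{V}\subset\mathcal{W}$ for $N$ large, and pairing this with the Theorem \ref{thm: second} estimate $\sup_{x\in\mathcal{K}}\abs{\mathcal{N}^{(i)}(x)-\Lambda_N(\mathcal{N}^{(i)})(x)}<\delta_i$. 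Your two-term telescope instead keeps $\Lambda_N(\mathcal{N}^{(i)})$ as coefficients and resolves the double $N$-dependence via the uniform bound $C_i=R\sum_h\norm{\phi^{(i)}_h}_{V'}$, which crucially invokes the global boundedness of $\sigma(V)$ (legitimately, since it is part of the assumed setting) together with $\norm{\Pi_N}_{\mathcal{L}(V)}\le 1$. Each approach buys something: yours is more modular, since it consumes Proposition \ref{prop: finite-dim approx for E t.v.s} as a black box rather than re-running the Theorem \ref{thm: second} estimates, and it correctly identifies that without some uniform-in-$N$ control the circled-set scaling cannot handle a product in which both factors move with $N$; the paper's argument, by anchoring the coefficients at the fixed $\mathcal{N}^{(i)}$, never needs boundedness of $\sigma(V)$ at this step (Lipschitz continuity alone suffices for the projection estimates), so it would survive in a setting where that hypothesis were dropped. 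Both proofs rely on the same underlying toolkit — the Schaefer circled base $\mathcal{B}$, $M$-fold neighborhood splitting, and the pre-Schauder convergence $\Pi^E_Nv^{(i)}\to v^{(i)}$ — and your bookkeeping of the final inclusion $(\mathcal{U}_0+\mathcal{U}')+\mathcal{U}'\subset\mathcal{U}_0+\mathcal{U}_1$ is sound.
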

\begin{proof}
    We choose $\mathcal{W}_0\in\mathcal B$ such that $\mathcal{W}_0+\mathcal{W}_0\subset\mathcal U_1$ and $\mathcal{W}\in\mathcal{B}$ such that 
    \[
\underbrace{\mathcal W + \dots + \mathcal W}_{M\text{ times}}\subset\mathcal{W}_0.
\]
Set $a_i:=\sup_{x\in\mathcal{K}}\abs{\mathcal{N}^{(i)}(x)}\in (0,\infty),\,i=1,\dots, M$, and find $N_0\in\N$ such that for all $N\ge N_0$ we have
\[
v^{(i)}-\Pi_N^Ev^{(i)}\in a_i^{-1}\mathcal{W},\quad i=1,\dots,M.
\]
In this way, arguing as above, we obtain
\begin{equation}\label{eq: corr: finite-dim approx for E t.v.s (1)}
    \sum_{i=1}^M\mathcal{N}^{(i)}(x)[v^{(i)}-\Pi_N^Ev^{(i)}]\in\mathcal{W}_0.
\end{equation}
We choose $\mathcal{V}\in\mathcal B$ such that $\mathcal V + \mathcal{V}\subset\mathcal{W}$, and $\delta_i>0$ such that $\delta_i v^{(i)}\in\mathcal{V},i=1,\dots,M$. We consequently find $N_1\in\N$ such that $N\ge N_1$ implies
\[
v^{(i)}-\Pi_N^Ev^{(i)}\in\delta_i^{-1}\mathcal V,\quad i=1,\dots,M.
\]
In this way, for $N\ge N_1$
\[
\delta_i\Pi_N^Ev^{(i)} = \delta_i(\Pi_N^Ev^{(i)}-v^{(i)}) + \delta_iv^{(i)}\in\mathcal{V} + \mathcal{V}\subset\mathcal{W},\quad i=1,\dots,M.
\]
As in the proof of Proposition \ref{prop: finite-dim approx for E t.v.s}, we can now find $N_2\in\N$ such that, if $N\ge N_2$,
\[
\sup_{x\in\mathcal K}\abs{\mathcal{N}^{(i)}(x) -\Lambda_N(\mathcal{N}^{(i)})(x)} < \delta_i,\quad i=1,\dots, M,
\]
and hence, for $N\ge \Bar{N}:=\max\{N_0,N_1,N_2\}$
\begin{equation}\label{eq: corr: finite-dim approx for E t.v.s (2)}
    \sum_{i=1}^M(\mathcal{N}^{(i)}(x) -\Lambda_N(\mathcal{N}^{(i)})(x) )\Pi_N^Ev^{(i)}\in \underbrace{\mathcal W + \dots + \mathcal W}_{M\text{ times}}\subset\mathcal{W}_0.
\end{equation}
Using \eqref{eq: corr: finite-dim approx for E t.v.s (1)} and \eqref{eq: corr: finite-dim approx for E t.v.s (2)}, we finally obtain ($N\ge\Bar{N}$)
\[
\begin{split}
    &g(x) - \sum_{i=1}^M\Lambda_N(\mathcal{N}^{(i)})(x)\Pi_N^Ev^{(i)}\\
    &\quad= g(x)-\mathcal{N}(x) + \sum_{i=1}^M\mathcal{N}^{(i)}(x)[v^{(i)}-\Pi_N^Ev^{(i)}] + \sum_{i=1}^M[\mathcal{N}^{(i)}(x) - \Lambda_N(\mathcal{N}^{(i)})(x) ]\Pi_N^Ev^{(i)}\\
    &\quad\quad\in\mathcal{U}_0+\mathcal{W}_0+\mathcal{W}_0\\
    &\quad\quad\subset\mathcal{U}_0 +\mathcal{U}_1,\quad x\in\mathcal{K}.
\end{split}
\]
    
\end{proof}

\subsection{Universal approximation results for targets that are quasi-Polish}\label{subsec: Universal approximation results for targets that are quasi-Polish}

As anticipated above, we can also have as a output space a second quasi Polish space, namely something that in general does not possess a linear structure. The price we have to pay is however that the resulting neural architectures will be only Borel measurable in general, even though with finite range. The main reason for this is based on the strategy we are going to apply: given two quasi-Polish spaces $\X$ and $\Y$, and a continuous function $g:\X\to \Y$, we will map the range of $g$ into $V$ via the injection map of $\Y$ (call it $H$), and we will apply our previous result (Proposition \ref{prop: the target is l.c.s}) to obtain an approximating neural network $\mathcal N$ with range in $V$. In order to eventually obtain an architecture with range in the original space $\Y$, we will need to pull back $\mathcal N$ to $\Y$ via $H^{-1}$. However, the range of $\mathcal{N}$ in general falls outside $H(\Y)$, and hence we are required  first to make a projection onto $H(\Y)$ is some way. Since in general this subset of $V$ will not be convex, we cannot use the classical theory of projections in Hilbert spaces, and we will have to resort to a variant of the metric projection in the spirit of Voronoi cells. This projection will always be of finite range but in general will fail to be continuous: refer also to Subsection \ref{subsec: metric projection} 

We would like to remark here that the result we are going to present now overlaps only partially with the previous ones where the target space was assumed to be a topological vector space. Indeed, it is trivial to see that q.P $\not\subset$ t.v.s, and, on the other hand, it also true that t.v.s $\not\subset$ q.P, because quasi-Polish spaces are functional Hausdorff and hence Hausdorff.

First of all, we want to recall here and prove this result concerning projections in metric spaces.
\begin{lemma}\label{lemma: projection in metric spaces}
    Let $(Z,d_Z)$ be a metric space, $m\in\N$ and $a_1,\dots,a_m$ distinct elements of $Z$. For any $z\in Z$, define
    \[
    j(z)=\min\left\{j\in \{1,\dots,m\};\;d_Z(z,a_j) = d_Z(z;\{a_1,\dots,a_m\}) \right\}
    \]
    and the map
    \[
    P^Z_{a_1,\dots,a_m}: Z\to Z,\quad z\mapsto a_{j(z)}.
    \]
    Then $P^Z_{a_1,\dots ,a_m}$ is $\mathcal{B}(Z)/\mathcal{B}(Z)$-measurable.
\end{lemma}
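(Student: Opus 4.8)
The plan is to exploit the fact that $P:=P^Z_{a_1,\dots,a_m}$ takes only the finitely many values $a_1,\dots,a_m$, so that its measurability reduces to checking that each of its level sets is Borel. Indeed, for an arbitrary $B\in\mathcal B(Z)$ one has
\[
P^{-1}(B)=\bigcup_{k\,:\,a_k\in B} C_k,\qquad C_k:=\{z\in Z;\, j(z)=k\}=P^{-1}(\{a_k\}),
\]
which is a finite union; hence it suffices to prove that $C_k\in\mathcal B(Z)$ for each $k=1,\dots,m$.

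First I would record that for every $j$ the distance function $g_j:Z\to[0,\infty)$, $g_j(z):=d_Z(z,a_j)$, is $1$-Lipschitz (an immediate consequence of the triangle inequality) and therefore continuous. Then I would translate the tie-breaking definition of $j(z)$ into inequalities among the $g_j$. By definition $j(z)=k$ means that $a_k$ realizes the minimal distance $d_Z(z;\{a_1,\dots,a_m\})=\min_{1\le j\le m}g_j(z)$ and that no index strictly smaller than $k$ does so. This yields the explicit description
\[
C_k=\Bigl(\bigcap_{j=1}^m\{z;\,g_k(z)\le g_j(z)\}\Bigr)\cap\Bigl(\bigcap_{j<k}\{z;\,g_k(z)<g_j(z)\}\Bigr),
\]
where for $k=1$ the second intersection is empty and hence equals all of $Z$.

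Next I would argue that every set occurring in this intersection is Borel. Since $g_k-g_j$ is continuous, $\{z;\,g_k(z)\le g_j(z)\}=(g_k-g_j)^{-1}((-\infty,0])$ is closed, while $\{z;\,g_k(z)<g_j(z)\}=(g_k-g_j)^{-1}((-\infty,0))$ is open. Thus $C_k$ is a finite intersection of closed and open sets, so $C_k\in\mathcal B(Z)$. Combining this with the first display gives $P^{-1}(B)\in\mathcal B(Z)$ for every $B\in\mathcal B(Z)$, which is exactly the asserted $\mathcal B(Z)/\mathcal B(Z)$-measurability.

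There is no genuine obstacle here: the argument is entirely elementary once the reduction to level sets is made. The only point demanding care is the faithful encoding of the minimal-index tie-breaking rule — it is precisely the strict inequalities $g_k(z)<g_j(z)$ for $j<k$ that exclude the ties resolved in favour of smaller indices — together with the observation that, because $P$ is finitely valued, one never needs to handle preimages of infinite families of sets.
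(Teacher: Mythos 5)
Your proof is correct and follows essentially the same route as the paper: both express the preimage $P^{-1}(\{a_k\})$ as an intersection of a closed set (non-strict inequalities against all indices) and an open set (strict inequalities against smaller indices, encoding the tie-breaking rule), using continuity of the distance functions $z\mapsto d_Z(z,a_j)$. The only difference is cosmetic — you spell out explicitly the reduction from arbitrary Borel sets to singleton preimages via the finite range of $P$, which the paper leaves implicit.
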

\begin{proof}
    We briefly recall its proof here. Set $D_k:Z\to \R,\,z\mapsto D_k(z):=d_Z(z,a_k)$ for $k=1,\dots ,m$: clearly, they are continuous. Besides, it is easy to see that for $k=2,\dots,m$, it holds
\[
 (P^Z_{a_1,\dots,a_m})^{-1}(a_k)= \left\{z\in Z;\, D_k(z) \le \min_{u=1,\dots, m} D_u(z)
\right\} \cap \left\{z\in Z;\, D_k(z) < \min_{u=1,\dots, k-1}D_u(z)
\right\}
\]
which results in an intersection of a closed and an open set. For $a_1$ it holds instead
\[
(P^Z_{a_1,\dots,a_m})^{-1}(a_1) = \left\{z\in Z;\, D_1(z) \le \min_{u=1,\dots, m} D_u(z)
\right\} 
\]
which is closed. This proves the lemma.

\end{proof}

In the following, if $Z=V$, we will simply write $P_{a_1,\dots,a_m}$
rather than $P^V_{a_1,\dots,a_m}$. Consider now two quasi-Polish spaces $(\X,\tau_\X,(f_n)_{n=1}^\infty)$ and $(\Y,\tau_\Y,(h_n)_{n=1}^\infty)$ and let 
\[
F=(f_1,f_2,\dots):\X\to \mathcal{Q}\subset V
\]
\[
H=(h_1,h_2,\dots):\Y\to \mathcal{Q}\subset V
\]
be the induced maps. We define the set of \textbf{quasi-Polish Borel neural networks} from $\X$ to $\Y$ as
\[
\begin{split}
    &\mathfrak{BN}_{F,H,\sigma}(\X,\Y)\\
    &\quad:=\left\{H^{-1}\circ P_{a_1,\dots,a_R}\circ \mathcal{N}:\X\to\Y;\;\mathcal{N}\in\mathfrak{N}_{F,\sigma}(\X, V),R\in\N,a_1\,\dots,a_R\in H(\Y)\right\}.
\end{split}
\]

We observe that the map $H^{-1}\circ P_{a_1,\dots,a_R}\circ \mathcal{N}$ is indeed $\mathcal{B}(\X)/\mathcal{B}(\Y)$-measurable because $\mathcal{N}$ is continuous, $P_{a_1,\dots,a_R}$ is Borel measurable and $H$ is a homeomorphism from the compact set $\{H^{-1}(a_1),\dots , H^{-1}(a_R) \}\subset\Y$ and $\{a_1,\dots , a_R \}\subset V$. Define on $\Y$ the metric
\[
    d_H:\Y\times\Y\to[0,\infty),\quad (y_1,y_2)\mapsto d_H(y_1,y_2)=\norm{H(y_1)-H(y_2)}_V,
    \]
whose induced topology restricted to compact subsets of $\Y$ coincide with the original topology $\tau_\Y$    
(see Lemma \ref{lemma: metrizability of compacts}). We have:
\begin{proposition}\label{prop: target q.P}
Assume that the activation function $\sigma:V\to V$ is continuous, satisfying the separating property \eqref{eq: abstract condition on sigma} and such that $\sigma(V)$ is bounded. Then given an arbitrary $g\in C(\X,\Y)$, a compact subset $\mathcal{K}\subset\X$ and an error $\varepsilon>0$, there exists a neural network $\mathcal{M}\in\mathfrak{BN}_{F,H,\sigma}(\X,\Y) $, $\mathcal{M}=H^{-1}\circ P_{a_1,\dots,a_R}\circ \mathcal{N}$ with suitable $\mathcal{N}\in\mathfrak{N}_{F,\sigma}(\X, V)$, $R\in\N$ and $a_1,\dots,a_R\in H\circ g(\mathcal{K})\subset\mathcal{Q}\subset V$ such that
   \[
   d_H\left(g(x),\mathcal{M}
   (x)\right)<\varepsilon,\quad x\in \mathcal{K}.
   \]
\end{proposition}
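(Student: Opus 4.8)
The plan is to transport the problem into the Hilbert space $V$ through the injection $H$ of the target, approximate there with the vector-valued result already established, and then snap the approximant back onto finitely many genuine outputs via the metric projection.

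First I would consider the composition $\Phi := H\circ g:\X\to V$. Since $g$ is continuous and $H$ is continuous, $\Phi\in C(\X,V)$, and because $H$ restricted to the compact set $g(\mathcal{K})$ is a homeomorphism (Lemma \ref{lemma: metrizability of compacts}), the image $\Phi(\mathcal{K})=H(g(\mathcal{K}))$ is a compact, hence totally bounded, subset of $\mathcal{Q}\subset V$. Fix parameters $\eta,\delta>0$ to be calibrated at the end. By total boundedness I can select finitely many \emph{distinct} points $a_1,\dots,a_R\in H(g(\mathcal{K}))$ whose $\delta$-balls cover $H(g(\mathcal{K}))$; crucially these lie in $H(g(\mathcal{K}))\subset H(\Y)$, exactly as the statement requires, so that $H^{-1}$ applies to each of them.

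Next, viewing $V$ as a (Hilbert, hence locally convex) space with the single seminorm $\norm{\cdot}_V$, I apply Proposition \ref{prop: the target is l.c.s} with $E=V$ to the continuous map $\Phi$ to obtain $\mathcal{N}\in\mathfrak{N}_{F,\sigma}(\X,V)$ with
\[
\sup_{x\in\mathcal{K}}\norm{\mathcal{N}(x)-H(g(x))}_V<\eta .
\]
I then set $\mathcal{M}:=H^{-1}\circ P_{a_1,\dots,a_R}\circ\mathcal{N}\in\mathfrak{BN}_{F,H,\sigma}(\X,\Y)$, which is well defined and $\mathcal{B}(\X)/\mathcal{B}(\Y)$-measurable by the discussion preceding the statement (using Lemma \ref{lemma: projection in metric spaces} and that each $a_r\in H(\Y)$).

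The estimate is the crux, and it is where the definition of $d_H$ pays off. Fix $x\in\mathcal{K}$, write $y:=H(g(x))$ and $a^\ast:=P_{a_1,\dots,a_R}(\mathcal{N}(x))$; since $a^\ast\in\{a_1,\dots,a_R\}\subset H(\Y)$ we have $H(\mathcal{M}(x))=a^\ast$, so $d_H(g(x),\mathcal{M}(x))=\norm{y-a^\ast}_V$ directly, with no continuity of $H^{-1}$ required. Choosing $a_r$ with $\norm{y-a_r}_V<\delta$ and using that $a^\ast$ is a nearest point of $\{a_1,\dots,a_R\}$ to $\mathcal{N}(x)$ gives
\[
\norm{\mathcal{N}(x)-a^\ast}_V\le\norm{\mathcal{N}(x)-a_r}_V\le\norm{\mathcal{N}(x)-y}_V+\norm{y-a_r}_V<\eta+\delta ,
\]
whence $\norm{y-a^\ast}_V\le\norm{y-\mathcal{N}(x)}_V+\norm{\mathcal{N}(x)-a^\ast}_V<2\eta+\delta$. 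Calibrating $\eta:=\varepsilon/4$ and $\delta:=\varepsilon/2$ yields $d_H(g(x),\mathcal{M}(x))<\varepsilon$ uniformly on $\mathcal{K}$. The only genuine subtleties are to draw the net $a_1,\dots,a_R$ from $H(g(\mathcal{K}))$ itself (so that the output atoms are legitimate elements of $H(\Y)$) and to exploit that $d_H$ is engineered precisely so that pulling back through $H^{-1}$ costs nothing; the possible discontinuity of $P_{a_1,\dots,a_R}$ is harmless, since only a uniform metric bound, and not continuity of $\mathcal{M}$, is asserted.
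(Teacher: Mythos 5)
Your proposal is correct and follows essentially the same route as the paper: compose with $H$, approximate $H\circ g$ in $V$ via Proposition \ref{prop: the target is l.c.s} with $E=V$, pick a finite net $a_1,\dots,a_R$ inside the compact image $H\circ g(\mathcal{K})$, apply the metric projection $P_{a_1,\dots,a_R}$, and conclude by the triangle inequality. The only (cosmetic) differences are the choice of constants ($\eta=\varepsilon/4$, $\delta=\varepsilon/2$ versus the paper's $\varepsilon/3$ splitting) and that you read off $d_H(g(x),\mathcal{M}(x))=\norm{H(g(x))-a^\ast}_V$ straight from the definition of $d_H$, whereas the paper phrases the same fact through the isometry statement of Lemma \ref{lemma: metrizability of compacts}.
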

\begin{proof}
    We consider the map $H\circ g:\X\to \mathcal{Q}\subset V$ which is continuous by composition; in particular, $H\circ g(\mathcal{K})$ is compact. By Proposition \ref{prop: the target is l.c.s} (say), we may find $\mathcal{N}\in\mathfrak{N}_{F,\sigma}(\X, V)$ with $\mathcal{N}(x)=\sum_{i=1}^M\mathcal{N}^{(i)}(x)v^{(i)},\, \mathcal{N}^{(i)}\in\mathfrak{N}_{F,\sigma}(\X),v^{(i)}\in V$ such that
    \[
    \norm{H\circ g(x)-\mathcal{N}(x)}_{V}<\varepsilon/3,\quad x\in \mathcal{K}.
    \]
    Since $H\circ g(\mathcal{K})$ is compact, we may find suitable $a_1,\dots,a_R\in H\circ g(\mathcal{K})$ such that 
    \[
    H\circ g(\mathcal{K})\subset\bigcup_{r=1}^RB(a_r,\varepsilon/3).
    \]
    Thus, for any $x\in\mathcal{K}$, there exists at least one $r_x\in\{1,\dots,R\}$ such that 
    \[
    \norm{H\circ g(x)-a_{r_x}}_{V}<\varepsilon/3
    \]
    and thus $\norm{\mathcal{N}(x)-a_{r_x}}_{V}<2\varepsilon/3$.
    We consider the metric projection $P_{a_1\dots,a_R}$ on $a_1,\dots,a_R$ which we know is $\mathcal{B}( V)/\mathcal{B}( V)$-measurable and with finite range (see Lemma \ref{lemma: projection in metric spaces}): so, the composition $P_{a_1\dots,a_R}\circ\mathcal{N}:\X\to V$ is $\mathcal{B}(\X)/\mathcal{B}( V)$-measurable. Fix $x\in\mathcal{K}$ and consider all $r\in\{1,\dots ,R\}$ such that $\norm{\mathcal{N}(x)-a_r}_{V}=d_{V}(\mathcal{N}(x),a_r)=d_{V}(\mathcal{N}(x),\{a_1,\dots,a_R\} )$. Since $\norm{\mathcal{N}(x)-a_{r_x}}_{V}<2\varepsilon/3$, it follows $d_{V}(\mathcal{N}(x),a_r)<2\varepsilon/3$ for all such $r$, and therefore
    \[
    \norm{\mathcal{N}(x)-P_{a_1,\dots,a_R}(\mathcal{N}(x))}_{V} < 2\varepsilon/3,\quad x\in\mathcal{K}.
    \]
    By the triangle inequality, we conclude
    \[
    \norm{H\circ g(x)-P_{a_1,\dots,a_R}(\mathcal{N}(x))}_{V} < \varepsilon,\quad x\in\mathcal{K}.
    \]
    By Lemma \ref{lemma: metrizability of compacts} we know that $H|_{g(\mathcal{K})}:(g(\mathcal{K}),\tau_{g(\mathcal{K})})\to H(g(\mathcal K))\subset V$ is an isometry, as well as its inverse. We set $\mathcal{M}:=H^{-1}\circ P_{a_1,\dots,a_R}\circ \mathcal{N}\in \mathfrak{BN}_{F,H,\sigma}(\X,\Y)$ and conclude that
    \[
    d_H\left(g(x),\mathcal M(x)\right)<\varepsilon,\quad x\in \mathcal{K}.
    \]

\end{proof}

\begin{remark}
    We observe that alternatively one may use in the proof of the last result neural network architectures of this form
    \[
    \sum_{k=1}^N\mathcal{N}_k (x)e_k,\quad N\in\N, (e_k)_{k=1}^\infty = \text{orthonormal basis of } V.
    \]
   This is clearly possible in virtue of Lemma \ref{lemma: E admits a pre-Schauder} (with now $E=V$).
\end{remark}

\begin{remark}\label{rmk: Acciaio's problem}
    Suppose now that we are given a compact metric space $(K,d)$ and a continuous map $g:(K,d)\to (Y,\rho)$, where $(Y,\rho)$ is a second metric space. Then $K$ is separable, and hence quasi-Polish. Besides, $g(K)$ is a compact, and thus $(g(K),\rho)$ is separable and once more quasi-Polish. Let $F$ and $H$ be the injection maps for $K$ and $g(K)$ respectively. Therefore, we may now apply the previous Proposition to the continuous map $g:(K,d)\to (g(K),\rho)$ and obtain a suitable architecture
    \[
    \mathcal M:=H^{-1}\circ P_{a_1,\dots,a_R}\circ \mathcal{N}:K\to g(K)\subset Y
    \]
    with $\mathcal{N}\in\mathfrak N_{F,\sigma}(K,V)$ and $a_1,\dots,a_R\in H\circ g(K)$ such that
    \[
    \rho(g(x), \mathcal{M}(x))<\varepsilon.
    \]
\end{remark}

Combining Proposition \ref{prop: target q.P} and Proposition \ref{prop: finite-dim approx for E t.v.s} immediately gives also in this case:
\begin{proposition}\label{prop: finite-dim approx for q.P.}
Assume in addition to the setting of Proposition \ref{prop: target q.P} that the activation function $\sigma$ is Lipschitz. Then given an arbitrary $g\in C(\X,\Y)$, a compact subset $\mathcal{K}\subset\X$ and an error $\varepsilon>0$, there exist 
\begin{itemize}
    \item $\mathcal{N}^{(1)},\dots,\mathcal{N}^{(M)}\in\mathfrak N_{F,\sigma}(\X)$ and $v^{(1)},\dots,v^{(M)}\in V$ for suitable $M\in\N$,
    \item $a_1,\dots,a_R\in H\circ g(\mathcal{K})\subset\mathcal{Q}\subset V$ for suitable $R\in\N$,
    \item and $\bar{N}\in\N$
\end{itemize}
such that for any $N\ge \Bar{N}$ it holds
\[
   d_H\left(g(x),\mathcal{M}
   (x)\right)<\varepsilon,\quad x\in \mathcal{K},
\]
where we have set $\mathcal{M}:=H^{-1}\circ P_{a_1,\dots,a_R}\circ \mathcal{N}    \in\mathfrak{BN}_{F,H,\sigma}(\X,\Y) $ and
\[
\mathcal{N}(x):=\sum_{i=1}^M \Lambda_N(\mathcal{N}^{(i)})(x) v^{(i)}.
\]
    
\end{proposition}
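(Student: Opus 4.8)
The plan is to re-run the argument of Proposition~\ref{prop: target q.P}, inserting the finite-dimensional networks $\Lambda_N(\mathcal{N}^{(i)})$ in place of the original scalar networks $\mathcal{N}^{(i)}$, and to absorb the extra error this produces by fixing a slightly finer tolerance at the outset. Set $\delta:=\varepsilon/5$. First I would apply Proposition~\ref{prop: the target is l.c.s} to the continuous map $H\circ g:\X\to V$ (exactly as is done inside the proof of Proposition~\ref{prop: target q.P}) to obtain a base network $\mathcal{N}_0=\sum_{i=1}^M\mathcal{N}^{(i)}v^{(i)}\in\mathfrak{N}_{F,\sigma}(\X,V)$ with $\norm{H\circ g(x)-\mathcal{N}_0(x)}_V<\delta$ for $x\in\mathcal{K}$, together with a finite cover $H\circ g(\mathcal{K})\subset\bigcup_{r=1}^R B(a_r,\delta)$ whose centres $a_1,\dots,a_R$ lie in $H\circ g(\mathcal{K})$. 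These are precisely the two quantitative ingredients appearing in the proof of Proposition~\ref{prop: target q.P}, and the resulting $\mathcal{N}^{(i)}$ and $v^{(i)}$ are exactly the data required by the statement.

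Next I would pass to the finite-dimensional version. Because $\sigma$ is Lipschitz, Proposition~\ref{prop: finite-dim approx for E t.v.s} applied with $E=V$ --- or directly Theorem~\ref{thm: second}, equation~\eqref{eq: second (2)}, applied to each $\mathcal{N}^{(i)}\in\mathfrak{N}_{F,\sigma}(\X)$ --- furnishes integers $N_i$ such that $\sup_{x\in\mathcal{K}}\abs{\mathcal{N}^{(i)}(x)-\Lambda_N(\mathcal{N}^{(i)})(x)}$ is arbitrarily small for $N\ge N_i$. Putting $\bar N:=\max_{i}N_i$ and choosing these tolerances so that $\norm{\mathcal{N}(x)-\mathcal{N}_0(x)}_V\le\sum_{i=1}^M\abs{\mathcal{N}^{(i)}(x)-\Lambda_N(\mathcal{N}^{(i)})(x)}\,\norm{v^{(i)}}_V<\delta$ for $x\in\mathcal{K}$ and $N\ge\bar N$, where $\mathcal{N}(x)=\sum_{i=1}^M\Lambda_N(\mathcal{N}^{(i)})(x)v^{(i)}$, the triangle inequality gives $\norm{H\circ g(x)-\mathcal{N}(x)}_V<2\delta$ on $\mathcal{K}$.

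Finally I would exploit the robustness of the metric projection. For each $x\in\mathcal{K}$ the cover yields an index $r_x$ with $\norm{H\circ g(x)-a_{r_x}}_V<\delta$, whence $\norm{\mathcal{N}(x)-a_{r_x}}_V<3\delta$. Since $a_{r_x}$ is one of the centres and $P_{a_1,\dots,a_R}$ returns a nearest centre, $\norm{\mathcal{N}(x)-P_{a_1,\dots,a_R}(\mathcal{N}(x))}_V\le\norm{\mathcal{N}(x)-a_{r_x}}_V<3\delta$. A last triangle inequality then gives $\norm{H\circ g(x)-P_{a_1,\dots,a_R}(\mathcal{N}(x))}_V<2\delta+3\delta=\varepsilon$. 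As the projected point belongs to $\{a_1,\dots,a_R\}\subset H\circ g(\mathcal{K})$, on which $H$ is an isometry by Lemma~\ref{lemma: metrizability of compacts}, this is exactly $d_H(g(x),\mathcal{M}(x))<\varepsilon$ for $\mathcal{M}=H^{-1}\circ P_{a_1,\dots,a_R}\circ\mathcal{N}$, as required.

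The main obstacle I anticipate is the discontinuity of the metric projection $P_{a_1,\dots,a_R}$: one cannot conclude that $P_{a_1,\dots,a_R}(\mathcal{N}(x))$ is close to $P_{a_1,\dots,a_R}(\mathcal{N}_0(x))$ by continuity, since $P_{a_1,\dots,a_R}$ jumps across the boundaries of its Voronoi-type cells. The device that sidesteps this is that I never need to control \emph{which} centre is selected; it is enough that the particular centre $a_{r_x}$ coming from the cover is within $3\delta$ of $\mathcal{N}(x)$, so that the genuinely selected nearest centre is automatically at least as close. This is what renders the replacement $\mathcal{N}_0\rightsquigarrow\mathcal{N}$ harmless even though $P_{a_1,\dots,a_R}$ is only Borel measurable.
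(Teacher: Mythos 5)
Your proof is correct and takes essentially the same approach as the paper: the paper's own proof is just the observation that Proposition \ref{prop: target q.P} and Proposition \ref{prop: finite-dim approx for E t.v.s} (equivalently, Theorem \ref{thm: second}, equation \eqref{eq: second (2)}, applied componentwise with $E=V$) combine, and your argument spells out exactly that combination --- base approximation of $H\circ g$, replacement of each $\mathcal{N}^{(i)}$ by $\Lambda_N(\mathcal{N}^{(i)})$, then the covering/metric-projection step --- with the observation about the harmlessness of the projection's discontinuity being precisely the mechanism already at work in the proof of Proposition \ref{prop: target q.P}. The $\varepsilon/5$ bookkeeping versus the paper's $\varepsilon/3$-style split is immaterial.
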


\section{On the necessity of the quasi-Polish condition}\label{sec: On the necessity of the quasi-Polish condition}

In this last section, our goal is to show a result which indicates that the category of quasi-Polish spaces is the correct category to work with if one aims at constructing approximating architectures on infinite-dimensional spaces $\X$ (topological dimension, algebraic dimension,...) which at the same time
\begin{enumerate}
    \item[i)] have sufficient expressive power to approximate arbitrary well continuous functions on $\X$,
    \item[ii)] are implementable in practice because specified by a finite number of parameters only,
    \item[iii)] and that are ``stable'' with respect to these parameters.
\end{enumerate}

These requirements are natural: clearly, the first one requires that the approximating architectures must satisfy universal approximation theorems of some sorts, while the second one merely demands that this family of functions must be represented and implementable into a machine with finite memory and computing power. The third one ensures some sort of continuity of the architectures with respect to theirs parameters, in the sense that tiny perturbations of the training parameters should be reflected in turn to tiny changes in the specification of the resulting architectures: refer to Definition \ref{def: continuity wrt param} and the discussion therein for extra details. 

After this premise, broadly speaking (see Proposition \ref{lemma: uat + finite} for a more precise statement), we will prove that if a topological space $(\X,\tau)$ grants the existence of such architectures, then it must be necessarily quasi-Polish.

We will focus here only on the scalar case, i.e. the target space on the architectures is $\R$, even though some ideas could also be extended to more general target spaces.

First of all, we define the ``infinite-dimensional parameters space''
\[
W:= V'\times \mathcal{L}(V)\times V
\]
endowed with the norm 
\[
\norm{(\phi,A,b)}_W := \norm{\phi}_{V'} + \norm{A}_{\mathcal{L}(V)} + \norm{b}_V,\quad (\phi,A,B)\in W.
\]
In this way, $(W,\norm{\cdot}_W)$ becomes a Banach space. We define the operator
\[
W\stackrel{R_1}{\longrightarrow} C(\X),\quad (\phi,A,b)\mapsto R_1(\phi,A,b) :=\mathcal{N}_{\phi,A,b}\circ F\equiv\langle \phi,\sigma(AF(\cdot)+b)
\rangle \in\mathfrak N_{F,\sigma}(\X),
\]
and, inductively, for $J\in\N$
\[
R_J:W^J\to C(\X),\quad R_J(w_1,\dots,w_J):= R_1(w_1)+\dots + R_1(w_J),\quad (w_1,\dots,w_J)\in W.
\]
So, evidently, $R_J((\phi_1,A_1,b_1),\dots,(\phi_J,A_J,b_j))=\sum_{j=1}^J\mathcal{N}_{\phi_j,A_j,b_j}\circ F$, namely the operators $R_J,J\in\N$ are the infinite-dimensional equivalent of the realization maps of \cite{PetersenRaslanVoigtlaender}. We observe
\begin{lemma}\label{lemma: the realization operator is continuous}
    Assume $\sigma:V\to V$ Lipschitz. For each $J\in\N$, the operator $R_J$ is continuous from $W^J$ into $C(\X)$, where $C(\X)$ is endowed with the topology of uniform convergence on compacts.
\end{lemma}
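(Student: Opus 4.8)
The plan is to reduce everything to the case $J=1$ and then to a single, uniform Lipschitz-type estimate. Since the topology of uniform convergence on compacts makes $C(\X)$ a topological vector space and since by definition $R_J(w_1,\dots,w_J)=R_1(w_1)+\dots+R_1(w_J)$, the map $R_J$ factors as the $J$-fold product $(w_1,\dots,w_J)\mapsto(R_1(w_1),\dots,R_1(w_J))$ followed by the (jointly continuous) $J$-fold addition of $C(\X)$. Hence it suffices to establish continuity of $R_1\colon W\to C(\X)$.

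Next I would unwind the target topology: it is generated by the seminorms $p_\mathcal{K}(g)=\sup_{x\in\mathcal{K}}\abs{g(x)}$ with $\mathcal{K}\subset\X$ compact, so continuity of $R_1$ at a fixed $w_0=(\phi_0,A_0,b_0)$ amounts to controlling $p_\mathcal{K}(R_1(w)-R_1(w_0))$ for every compact $\mathcal{K}$. Before estimating, I would record two facts: first, since $F(\X)\subset\mathcal{Q}$, one has the uniform bound $\norm{F(x)}_V\le M:=(\sum_{i\ge 1}i^{-2})^{1/2}$ for all $x\in\X$; second, a Lipschitz $\sigma$ with constant $L$ satisfies the linear growth bound $\norm{\sigma(z)}_V\le\norm{\sigma(0)}_V+L\norm{z}_V$ for every $z\in V$.

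The core computation would then, for $w=(\phi,A,b)$, split $R_1(w)(x)-R_1(w_0)(x)$ by adding and subtracting $\langle\phi,\sigma(A_0F(x)+b_0)\rangle$, bounding the first piece by $\norm{\phi}_{V'}L(\norm{A-A_0}_{\mathcal{L}(V)}M+\norm{b-b_0}_V)$ via Cauchy--Schwarz and the Lipschitz property (using $\norm{(A-A_0)F(x)}_V\le\norm{A-A_0}_{\mathcal{L}(V)}M$), and the second piece by $\norm{\phi-\phi_0}_{V'}\norm{\sigma(A_0F(x)+b_0)}_V$. Crucially, the bounds are uniform in $x$ thanks to $\norm{F(x)}_V\le M$, so the passage to $\sup_{x\in\mathcal{K}}$ costs nothing and the resulting constant turns out to be independent of $\mathcal{K}$.

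The main obstacle I anticipate is precisely that $\sigma$ is only assumed Lipschitz, not bounded, so the factor $\norm{\sigma(A_0F(x)+b_0)}_V$ and the factor $\norm{\phi}_{V'}$ must be kept under control. I would handle this by arguing locally: restricting to $\norm{w-w_0}_W\le 1$ forces $\norm{\phi}_{V'}$, $\norm{A}_{\mathcal{L}(V)}$, $\norm{b}_V$ to stay bounded by constants depending only on $w_0$, and the linear growth bound together with $\norm{F(x)}_V\le M$ then bounds $\norm{\sigma(A_0F(x)+b_0)}_V$ by a constant depending only on $w_0$ and $\sigma$. Collecting terms yields $p_\mathcal{K}(R_1(w)-R_1(w_0))\le C\norm{w-w_0}_W$ for a constant $C=C(w_0,\sigma)$ and all $w$ near $w_0$, i.e. $R_1$ is locally Lipschitz, hence continuous; arbitrariness of $w_0$ finishes the argument.
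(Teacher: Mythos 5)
Your proposal is correct and follows essentially the same route as the paper: reduce to $J=1$, then add and subtract a cross term, estimate via Cauchy--Schwarz and the Lipschitz property of $\sigma$, and use the uniform bound $\norm{F(x)}_V\le(\sum_{i\ge1}i^{-2})^{1/2}$ to make the estimate uniform in $x$. Your treatment is in fact slightly more careful than the paper's, which leaves implicit the local boundedness of the varying functional's norm ($\norm{\bar\phi}_{V'}$ in its notation) that your restriction to $\norm{w-w_0}_W\le 1$ makes explicit, and which does not state the local Lipschitz estimate you obtain as a by-product.
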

\begin{proof}
    Evidently, it is sufficient to prove the continuity of $R_1$. To this end, fix a compact set $\mathcal{K}\subset\X$ and an arbitrary point $(\phi,A,b)\in W$. It holds, for $x\in\mathcal K$ and $(\Bar{\phi},\bar{A},\bar{b})\in W$,
    \[
    \begin{split}
        &\abs{R_1(\phi,A,b)(x)-R_1(\Bar{\phi},\Bar{A},\Bar{b})(x)}=\abs{\langle\phi,\sigma(AF(x)+b)\rangle - \langle \Bar{\phi},\sigma(\Bar{A}F(x)+\Bar{b})\rangle}\\
        &\quad\le\abs{\langle \phi -\Bar{\phi},\sigma(AF(x)+b) \rangle} + \abs{\langle \Bar{\phi},\sigma(AF(x)+b) -\sigma(\Bar{A}F(x)+\bar{b})}\\
        &\quad\le \norm{\phi-\bar{\phi}}_V \max_{x\in\mathcal{K}}\norm{\sigma(AF(x)+b)}_V + Lip(\sigma)\norm{\Bar{\phi}}_{V'}\norm{AF(x)-\Bar{A}F(x)+b-\Bar{b}}_V\\
        &\quad\le\norm{\phi-\bar{\phi}}_V \max_{x\in\mathcal{K}}\norm{\sigma(AF(x)+b)}_V + Lip(\sigma)\norm{\Bar{\phi}}_{V'}\norm{A-\Bar{A} }_{\mathcal{L}(V)}\norm{F(x)}_V\\
        &\quad\quad\quad\quad +Lip(\sigma)\norm{\Bar{\phi}}_{V'}\norm{b-\bar{b}}_V. 
    \end{split}
    \]
    Because $\norm{F(x)}^2\le \sum_{i=1}^\infty \frac{1}{i^2} =
    \frac{\pi^2}{6}$ for any $x\in \X$, it is now clear that 
    \[
    \sup_{x\in\mathcal{K}}\abs{R_1(\phi,A,b)(x)-R_1(\Bar{\phi},\Bar{A},\Bar{b})(x)}\to 0
    \]
    as $(\Bar{\phi},\Bar{A},\Bar{b})\to (\phi,A,b)$ in $W$.
\end{proof}

Let us define the following extensions operators, which will allow us to canonically embed finite-dimensional parameters spaces into infinite-dimensional ones: to this end, let now $(e_j)_{j\in\N}$ be the canonical basis of $V$, and set $(N\in\N)$
\begin{equation}\label{eq: Ext1}
    \operatorname{Ext}_1:\R^N\to V',\quad h\mapsto \operatorname{Ext}_1(h);\; \langle\operatorname{Ext}_1(h),e_j\rangle:=
\begin{cases}
h_j,\quad 1\le j \le N\\
0,\quad\text{otherwise.}
\end{cases}
\end{equation}
and extended by linearity on the whole $V$.
\begin{equation}\label{eq: Ext2}
   \operatorname{Ext}_2:\R^{N\times N}\to\mathcal{L}(V),\quad \beta\mapsto\operatorname{Ext}_2(\beta):=B
\end{equation}
where $B$ is the unique element of $\mathcal{L}(V)$ such that $(Be_j,e_i)_V=\Bar{\beta}_{ij},\,j,i\in\N$, and where
\[
\Bar{\beta}_{ij}:=
\begin{cases}
\beta_{ij},\quad 1\le i,j\le N\\
0,\quad\text{otherwise}.
\end{cases}
\]
\begin{equation}\label{eq: Ext3}
    \operatorname{Ext}_3:\R^N\to V,\quad y\mapsto\operatorname{Ext}_3(y):=(y_1,\dots,y_N,0,0,\dots).
\end{equation}
These operators are well defined, linear and bounded: see Lemma \ref{lemma: Ext} in the Appendix. Besides, it is easy to check that the following identities hold,
\begin{equation}\label{eq: replication 1}
\operatorname{Ext}_1(\langle\phi,e_1\rangle,\dots,\langle\phi,e_N\rangle) = \phi\circ\Pi_N,\quad\forall\phi\in V',
\end{equation}
\begin{equation}\label{eq: replication 2}
\operatorname{Ext}_2(\beta) = \Pi_N\circ A\circ \Pi_N,\quad \forall A\in\mathcal{L}(V),\quad\text{where } \beta_{ij}:= (Ae_j,e_i)_V,\,1\le i,j\le N,
\end{equation}
\begin{equation}\label{eq: replication 3}
\operatorname{Ext}_3((b,e_1)_V,\dots,(b,e_N)_V) =\Pi_Nb,\quad \forall b\in V.
\end{equation}

Consequently we define the following extension operator from a ``finite-dimensional parameters space'' into the ``infinite-dimensional parameters space'' $W$
\[
\operatorname{Ext}: \R^N\times\R^{N\times N}\times \R^N\to W,\quad (h,\beta,y)\mapsto (\operatorname{Ext}_1(h),\operatorname{Ext}_2(\beta),\operatorname{Ext}_3(y)),
\]
which is clearly linear and bounded, because
\[
\norm{\operatorname{Ext}(h,\beta,y)}_W\le \norm{h}_{\R^N}+\norm{\beta}_{\R^{N\times N}} + \norm{y}_{\R^N} 
\]
(refer to the proof of Lemma \ref{lemma: Ext}).

Similarly, for $J\in\N$ we can also define the $J$-th tensor power of this operator in the natural way
\[
\begin{split}
    &\operatorname{Ext}^{\otimes J}: \left(\R^N\times\R^{N\times N}\times \R^N\right)^J\to W^J\\
    &((h_1,\beta_1,y_1),\dots,(h_J,\beta_J,y_J))\mapsto (\operatorname{Ext}(h_1,\beta_1,y_1),\dots ,\operatorname{Ext}(h_J,\beta_J,y_J))
\end{split}
\]

which is again linear and bounded. In view of all of this and Lemma \ref{lemma: the realization operator is continuous}, we then have
\begin{lemma}\label{lemma: the realization operator comcposed with Ext is continuous}
    Assume $\sigma:V\to V$ Lipschitz. For each $N,J\in\N$, the non-linear ``realization'' operator 
    \[
    R_J\circ \operatorname{Ext}^{\otimes J}: \left(\R^N\times\R^{N\times N}\times \R^N\right)^J\to C(\X) 
    \]
    is continuous, where $C(\X)$ is endowed with the topology of uniform convergence on compacts.
\end{lemma}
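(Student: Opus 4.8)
The plan is to observe that the operator in question is literally a composition of two maps whose continuity we have already established, so that continuity follows immediately from the fact that a composition of continuous maps is continuous. The decomposition is
\[
R_J\circ\operatorname{Ext}^{\otimes J}:\left(\R^N\times\R^{N\times N}\times\R^N\right)^J\xrightarrow{\operatorname{Ext}^{\otimes J}}W^J\xrightarrow{R_J}C(\X),
\]
and the strategy is simply to justify continuity of each arrow separately.

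First I would invoke the continuity of the inner map $\operatorname{Ext}^{\otimes J}$. By construction this map is linear, and it was shown (just above the statement, and in Lemma \ref{lemma: Ext}) to be bounded, with the explicit norm estimate
\[
\norm{\operatorname{Ext}(h,\beta,y)}_W\le\norm{h}_{\R^N}+\norm{\beta}_{\R^{N\times N}}+\norm{y}_{\R^N}.
\]
Since a bounded linear operator between normed spaces is continuous, and the $J$-th tensor power inherits this boundedness coordinatewise, $\operatorname{Ext}^{\otimes J}$ is a continuous map from the finite-dimensional parameter space into $W^J$.

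Next I would invoke Lemma \ref{lemma: the realization operator is continuous}, which (under the standing assumption that $\sigma$ is Lipschitz) asserts precisely that $R_J:W^J\to C(\X)$ is continuous when $C(\X)$ carries the topology of uniform convergence on compacts. Composing the two continuous maps then yields continuity of $R_J\circ\operatorname{Ext}^{\otimes J}$, which is the claim.

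I do not expect any genuine obstacle here: all the analytic work has already been absorbed into Lemma \ref{lemma: the realization operator is continuous}, whose proof exploited the Lipschitz bound on $\sigma$ together with the uniform control $\norm{F(x)}_V^2\le\pi^2/6$ to estimate $\sup_{x\in\mathcal{K}}\abs{R_1(\phi,A,b)(x)-R_1(\bar\phi,\bar A,\bar b)(x)}$. The present statement adds nothing beyond pre-composition with a bounded linear embedding, so the only point worth stressing is that the topology on $C(\X)$ in both the hypothesis and the conclusion is the same (uniform convergence on compacts), which guarantees that the composition is continuous into the intended target topology.
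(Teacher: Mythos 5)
Your proposal is correct and is essentially identical to the paper's own argument: the paper derives this lemma by noting that $\operatorname{Ext}^{\otimes J}$ is linear and bounded (via Lemma \ref{lemma: Ext} and the displayed norm estimate) and then composing with the continuity of $R_J$ established in Lemma \ref{lemma: the realization operator is continuous}. Nothing is missing, and your emphasis on the target topology being the same in both steps is the right (if implicit) point.
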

More explicitly, the action of this operator is
\[
R_J\circ \operatorname{Ext}^{\otimes J}((h_1,\beta_1,y_1),\dots,(h_J,\beta_J,y_J)) = \sum_{j=1}^J \langle \operatorname{Ext}_1(h_j),\sigma(\operatorname{Ext}_2(\beta_j)F(\cdot) +\operatorname{Ext}_3(y_j) )\rangle.
\]

Therefore, the continuous operator $R_J\circ \operatorname{Ext}^{\otimes J}$ naturally induces the following map: set for convenience $r:=(N^2+2N)J$, and define
\begin{equation}\label{eq: phi r from my NN}
\phi^{(r)}:\X\times\R^r\to\R,\quad (x,\theta)\mapsto (R_J\circ\operatorname{Ext}^{\otimes J})(\theta)(x).
\end{equation}
From the last Lemma, we see that the mapping
    \[
    \R^r\ni\theta\mapsto\phi^{(r)}(\cdot,\theta)\in C(\X)
    \]
    is continuous, and, from Theorems \ref{thm: first} and \ref{thm: second}, we deduce that, for any $\varepsilon>0,\mathcal{K}\subset\X$ compact and $g:\X\to\R$ continuous there exist $r\in\N$ and $\theta\in\R^r$ such that
    \[
    \sup_{x\in\mathcal{K}}\abs{g(x) - \phi^{(r)}(x,\theta)}<\varepsilon.
    \]

All of that motivates the following. Let us consider now an arbitrary topological space $(\X,\tau)$. For any $r\in\N$ consider the following ``architectures''
\[
\phi^{(r)}:\X\times \Theta^{(r)}\to \R
\]
where $\Theta^{(r)}\subset\R^r$ is an arbitrary non-empty subset of the Euclidean space. Assume that $\phi^{(r)}(\cdot,\theta)\in C(\X)$ for any $\theta\in\Theta^{(r)}$, and observe that we do not require the same functional form as above, i.e. we are not necessarily considering a parametric family. Set
\begin{equation}\label{eq: Phi r}
\Phi^{(r)}:=\{\phi^{(r)}(\cdot,\theta);\,\theta\in\Theta^{(r)}\}\subset C(\X)
\end{equation}
and
\begin{equation}\label{eq: Phi}
\Phi:=\bigcup_{r\in\N}\Phi^{(r)}
\end{equation}
We give the following definition:
\begin{definition}\label{def: (UAP)}
    We say that the Universal Approximation Property (UAP) holds for the family $\Phi$ if for any $\varepsilon>0$, for any $\mathcal{K}\subset\X$ compact and any $g\in C(\X)$ there exists $u\in\Phi$ such that \[
\sup_{x\in\mathcal{K}}\abs{g(x)-u(x)}<\varepsilon,
\]
namely $\Phi$ is dense in $C(\X)$, whereas also in this case $C(\X)$ is endowed with the locally convex topology of the uniform convergence on compacts, namely the one generated by the seminorms $$\{p_\mathcal{K};\,\mathcal{K}\subset\X\text{ compact}\},$$ and where obviously $p_\mathcal{K}(g):=\sup_{x\in\mathcal{K}}\abs{g(x)}$. 
\end{definition}

Besides, 
\begin{definition}\label{def: continuity wrt param}
  We say that the family $\Phi$ is continuous with respect to its parameters if for any $r\in\N$, the mappings
    \[
    \Theta^{(r)}\ni\theta\mapsto\phi^{(r)}(\cdot,\theta)\in C(\X)
    \]
    are continuous  
\end{definition}

As we have just seen above, our infinite-dimensional architectures satisfy this stability property. Besides, from \cite[Proposition 4.1]{PetersenRaslanVoigtlaender}, we also see that all classical feedforward neural networks enjoy this property, as soon as the activation function is assumed to be continuous. This property seems to be natural and desirable in practice, because it ensures that small perturbations of the training parameters will not produce dramatic changes in the realization of the final architectures.     

The property is also ensured for instance if $\X$ is a metric space and $\phi^{(r)}$ is assumed jointly continuous. Indeed, let us fix an arbitrary compact set $\mathcal{K}\subset\X$ and a point $\theta_0\in \Theta^{(r)}$. For convenience, let us also assume that $\Theta^{(r)}$ is open. Let $\varepsilon>0$. Then $\mathcal{K}\times B$ is compact, where $B$ is a suitable closed ball in $\Theta^{(r)}$ around $\theta_0$. By Heine-Cantor, $\phi^{(r)}$ is uniformly continuous on $\mathcal{K}\times B$, and this leads to 
    \[
    \abs{\phi^{(r)}(x,\theta) -\phi^{(r)}(x,\theta_0) } < \varepsilon,\quad x\in\mathcal{K},\;\abs{\theta-\theta_0}<\delta
    \]
    for a suitable $\delta>0$, namely $p_{\mathcal K}(\phi^{(r)}(\cdot,\theta) - \phi^{(r)}(\cdot,\theta_0))<\varepsilon$ if $\abs{\theta-\theta_0}<\delta$: we have showed that $\phi^{(r)}(\cdot,\theta)$ is continuous at an arbitrary point $\theta_0$, and hence the claim.

Therefore, we can re-formulate our results from the previous sections as:
\begin{proposition}\label{prop: re-formulation of our results}
    Let $(\X,\tau)$ be a quasi-Polish space with injection map $F$. Assume that the activation function $\sigma:V\to V$ is Lipschitz continuous, satisfying the separating property \eqref{eq: abstract condition on sigma} and with bounded range. Consider  
    \[
\Phi^{(r)}=\{\phi^{(r)}(\cdot,\theta);\,\theta\in\R^r\},\quad \Phi=\bigcup_{r\in\N}\Phi^{(r)}
\]
where the maps $\phi^{(r)}:\X\times\R^r\to \R$ are defined in \eqref{eq: phi r from my NN}
Then the family $\Phi$ enjoys (UAP) and it is continuous with respect to its parameters.
\end{proposition}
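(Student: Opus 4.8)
The plan is to observe that the proposition merely repackages results already established above, so the task reduces to matching the parametric family $\Phi$ with the finite-dimensional networks $\Lambda_N(\mathcal N)$ and then quoting the relevant continuity and density statements. For the continuity with respect to the parameters, I would argue as follows. Under the canonical identification $\R^r\cong(\R^N\times\R^{N\times N}\times\R^N)^J$ with $r=(N^2+2N)J$, the definition \eqref{eq: phi r from my NN} reads $\phi^{(r)}(\cdot,\theta)=(R_J\circ\operatorname{Ext}^{\otimes J})(\theta)$. Lemma \ref{lemma: the realization operator comcposed with Ext is continuous} asserts precisely that $R_J\circ\operatorname{Ext}^{\otimes J}$ is continuous into $C(\X)$ equipped with the topology of uniform convergence on compacts; hence $\theta\mapsto\phi^{(r)}(\cdot,\theta)$ is continuous, which is exactly the requirement of Definition \ref{def: continuity wrt param}. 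It is the Lipschitz hypothesis on $\sigma$ that makes that lemma applicable.

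For the (UAP), I would fix $g\in C(\X)$, a compact $\mathcal K\subset\X$ and $\varepsilon>0$. Since $\sigma$ satisfies the separating property and has bounded range, Theorem \ref{thm: first} furnishes some $\mathcal G^{\varepsilon/2}=\sum_{j=1}^J\mathcal N_{\phi_j,A_j,b_j}\circ F\in\mathfrak N_{F,\sigma}(\X)$ with $\sup_{x\in\mathcal K}\abs{g(x)-\mathcal G^{\varepsilon/2}(x)}<\varepsilon/2$. As $\sigma$ is in addition Lipschitz, Theorem \ref{thm: second} (estimate \eqref{eq: second (2)}, applied with projection error $\varepsilon/2$) yields an $N_\ast$ such that $\sup_{x\in\mathcal K}\abs{\Lambda_N(\mathcal G^{\varepsilon/2})(x)-\mathcal G^{\varepsilon/2}(x)}<\varepsilon/2$ for all $N\ge N_\ast$, whence $\sup_{x\in\mathcal K}\abs{g(x)-\Lambda_N(\mathcal G^{\varepsilon/2})(x)}<\varepsilon$ by the triangle inequality. (Equivalently, this density is the content of Corollary \ref{corr: to second}.)

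It then remains to recognize $\Lambda_N(\mathcal G^{\varepsilon/2})$ as a member of $\Phi$. By definition $\Lambda_N$ acts on each neuron by $\langle\phi_j\circ\Pi_N,\sigma(\Pi_N A_j\Pi_N F+\Pi_N b_j)\rangle$, and the replication identities \eqref{eq: replication 1}, \eqref{eq: replication 2}, \eqref{eq: replication 3}, applied with $h_j=(\langle\phi_j,e_i\rangle)_{i=1}^N$, $(\beta_j)_{ik}=(A_je_k,e_i)_V$ and $y_j=((b_j,e_i)_V)_{i=1}^N$, identify the three projected objects with $\operatorname{Ext}_1(h_j)$, $\operatorname{Ext}_2(\beta_j)$ and $\operatorname{Ext}_3(y_j)$ respectively. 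Hence $\Lambda_N(\mathcal G^{\varepsilon/2})=(R_J\circ\operatorname{Ext}^{\otimes J})(\theta)=\phi^{(r)}(\cdot,\theta)$ for the parameter $\theta=((h_j,\beta_j,y_j))_{j=1}^J\in\R^r$, $r=(N^2+2N)J$, so $\Lambda_N(\mathcal G^{\varepsilon/2})\in\Phi^{(r)}\subset\Phi$, which verifies Definition \ref{def: (UAP)}. I do not anticipate a genuine obstacle: the only point requiring care is this identification, namely keeping the extension operators correctly paired with the projections $\Pi_N$ and tracking the parameter count $r=(N^2+2N)J$; the substance of the statement is entirely supplied by Theorems \ref{thm: first} and \ref{thm: second} together with Lemma \ref{lemma: the realization operator comcposed with Ext is continuous}.
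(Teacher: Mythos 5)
Your proposal is correct and follows essentially the same route as the paper, which presents this proposition as a direct repackaging of the preceding material: continuity with respect to parameters from Lemma \ref{lemma: the realization operator comcposed with Ext is continuous}, density from Theorems \ref{thm: first} and \ref{thm: second} (equivalently Corollary \ref{corr: to second}), and the replication identities \eqref{eq: replication 1}--\eqref{eq: replication 3} to recognize $\Lambda_N(\mathcal G^{\varepsilon/2})$ as $\phi^{(r)}(\cdot,\theta)$ for $\theta\in\R^r$, $r=(N^2+2N)J$. Your explicit write-up of the identification step (pairing $h_j$, $\beta_j$, $y_j$ with $\phi_j\circ\Pi_N$, $\Pi_N A_j\Pi_N$, $\Pi_N b_j$) is exactly the bookkeeping the paper leaves implicit.
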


We are now going to show a result which is basically the converse of this last one, and which somehow suggests that the category of quasi-Polish spaces is the correct category to work with in order to obtain universal approximation theorems in infinite-dimensional spaces (topological, algebraic,...) and approximating architectures that are  nonetheless implementable in practice because specified by a finite number of parameters.

We have:
\begin{proposition}\label{lemma: uat + finite}
    Let $(\X,\tau)$ be a Tychonoff topological space, namely $\X$ is Hausdorff and completely regular. Assume that there exists a family of functions $\Phi$ defined as in \eqref{eq: Phi} that enjoys the (UAP) and that is continuous with respect to its parameters. Then $(\X,\tau)$ is quasi-Polish.
\end{proposition}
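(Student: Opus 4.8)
The plan is to manufacture an explicit countable separating family of continuous functions out of the data encoded in $\Phi$, thereby verifying Assumption \ref{assumption} directly.

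First I would record the point-separation that is already forced by the (UAP) of Definition \ref{def: (UAP)}. Since $(\X,\tau)$ is Tychonoff, it is in particular functional Hausdorff: given $x_1\neq x_2$, the singleton $\{x_2\}$ is closed (Hausdorff implies $T_1$) and $x_1\notin\{x_2\}$, so complete regularity yields $g\in C(\X)$ with $g(x_1)\neq g(x_2)$. Set $\eta:=\abs{g(x_1)-g(x_2)}>0$ and apply (UAP) with the finite, hence compact, set $\mathcal{K}=\{x_1,x_2\}$ and error $\eta/3$: this produces some $u=\phi^{(r)}(\cdot,\theta)\in\Phi$ with $\sup_{x\in\mathcal K}\abs{g(x)-u(x)}<\eta/3$, so by the triangle inequality $\abs{u(x_1)-u(x_2)}>\eta/3>0$. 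Thus $\Phi$ separates the points of $\X$; the only remaining difficulty is that $\Phi$ is in general uncountable.

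The reduction to a countable family is the heart of the argument and the place where both hypotheses are used together. For each $r\in\N$ the parameter set $\Theta^{(r)}$ is a subspace of the second countable space $\R^r$, hence is itself separable; fix a countable dense subset $D^{(r)}\subset\Theta^{(r)}$ and put
\[
\mathcal{F}:=\{\phi^{(r)}(\cdot,\theta);\,r\in\N,\ \theta\in D^{(r)}\}\subset C(\X),
\]
a countable family of continuous functions. To see that $\mathcal F$ still separates points, fix $x_1\neq x_2$ and let $u=\phi^{(r)}(\cdot,\theta)$ separate them as above, writing $\kappa:=\abs{\phi^{(r)}(x_1,\theta)-\phi^{(r)}(x_2,\theta)}>0$. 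By continuity with respect to the parameters (Definition \ref{def: continuity wrt param}), applied with the compact set $\mathcal{K}=\{x_1,x_2\}$ and error $\kappa/3$, there is a neighborhood $U$ of $\theta$ in $\Theta^{(r)}$ such that $\sup_{x\in\mathcal K}\abs{\phi^{(r)}(x,\theta')-\phi^{(r)}(x,\theta)}<\kappa/3$ for all $\theta'\in U$. Choosing $\theta'\in D^{(r)}\cap U$ (possible by density) and using the triangle inequality once more gives $\abs{\phi^{(r)}(x_1,\theta')-\phi^{(r)}(x_2,\theta')}>\kappa/3>0$, so $\phi^{(r)}(\cdot,\theta')\in\mathcal F$ separates $x_1$ and $x_2$.

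Finally I would normalise the range: composing each element of the countable separating family $\mathcal F$ with a fixed continuous injection $\varphi:\R\to[-1,1]$, for instance $\varphi(t)=\tfrac{2}{\pi}\arctan t$ as in the remark following the consequences of Assumption \ref{assumption}, preserves both continuity and point-separation while forcing the range into $[-1,1]$. The countable family $\{\varphi\circ h;\,h\in\mathcal F\}$ then satisfies Assumption \ref{assumption}, so $(\X,\tau)$ is quasi-Polish. I expect the main obstacle to be exactly the passage from the possibly uncountable $\Phi$ to the countable $\mathcal F$: it is the continuity in the parameters (requirement iii) together with the separability of each finite-dimensional $\Theta^{(r)}$ (a consequence of the finite-parameter requirement ii) that makes this discretisation work, while (UAP) (requirement i) supplies the initial separation.
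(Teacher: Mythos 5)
Your proof is correct and follows essentially the same route as the paper: both discretise each parameter space $\Theta^{(r)}$ by a countable dense subset, use the continuity with respect to the parameters together with a triangle inequality to pass to the resulting countable family, and combine the Tychonoff property with (UAP) applied on a two-point compact set. The only organisational difference is that the paper first transports the full (UAP) to the countable family $\mathcal{D}$ and then derives point-separation by contradiction, whereas you establish point-separation for the whole $\Phi$ first and then transport only that weaker property to the countable family --- a slightly leaner bookkeeping of the same ideas.
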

\begin{proof}
   Consider $D^{(r)}\subset\Theta^{(r)}$ dense and countable, and define
   \[
\mathcal{D}^{(r)}=\{\phi^{(r)}(\cdot,\theta);\,\theta\in D^{(r)}\}\subset \Phi^{(r)}
\]
and
\[
\mathcal{D}=\bigcup_{r\in\N}\mathcal{D}^{(r)}
\subset\Phi.
\]
First of all, we want to ``transport'' (UAP) on $\mathcal{D}$: to this end, we choose an arbitrary $\varepsilon>0$, a compact subset $\mathcal{K}\subset\X$ and $g\in C(\X)$. Then there exists $u\in\Phi$ such that
\[
p_\mathcal{K}(g-u)<\varepsilon/2.
\]
So, $u(\cdot)=\phi^{(r_0)}(\cdot,\theta_0)$ for some $r_0\in\N$ and $\theta_0\in\Theta^{(r_0)}$. Moreover, in view of the continuity of $\Phi$ with respect to its parameters, there exists $\delta>0$ such that 
\[
 p_\mathcal{K}(\phi^{(r_0)}(\cdot,\theta) - \phi^{(r_0)}(\cdot,\theta_0)  ) < \varepsilon/2
\]
whenever $\abs{\theta-\theta_0}<\delta,\,\,\theta\in\Theta^{(r_0)}$. By density, we can and will choose $\abs{\theta-\theta_0}<\delta,\,\,\theta\in D^{(r_0)} $. Hence, we get $$p_\mathcal{K}(\phi^{(r_0)}(\cdot,\theta) - \phi^{(r_0)}(\cdot,\theta_0)  ) < \varepsilon/2,$$ but where now $\phi^{(r_0)}(\cdot,\theta)\in\mathcal{D}^{(r_0)} \subset \mathcal{D}$ rather than in the larger space $\Phi^{(r_0)}$. Hence,
\[
\begin{split}
    p_{\mathcal{K}}(g-\phi^{(r_0)}(\cdot,\theta)) &\le p_{\mathcal{K}}(g-u) + p_{\mathcal{K}}(u-\phi^{(r_0)}(\cdot,\theta))\\
    &< \varepsilon/2 + p_{\mathcal{K}}(\phi^{(r_0)}(\cdot,\theta_0)  -\phi^{(r_0)}(\cdot,\theta))\\
    &<\varepsilon,
\end{split}
\]
and thus we see that the (UAP) holds also for the smaller class $\mathcal{D}\subset C(\X)$.

Our goal is now to prove that $\mathcal{D}$ separates the points of $\X$. To this end, assume that there exist $x_0,x_1\in\X,x_0\neq x_1$ such that $u(x_0)=u(x_1)$ for all $u\in\mathcal{D}$. We observe that $\{x_1\}$ is closed, because $\X$ is Hausdorff, and, since it is completely regular, we may find $g\in C(\X)$ such that $g(x_1)=1$ and $g(x_0)=0$. We fix $\varepsilon=1/4$ and the compact set $\mathcal{K}=\{x_0,x_1\}$. By the (UAP) for $\mathcal{D}$, there must exist $\Tilde{u}\in \mathcal{D}$ such that 
\[
\abs{1-\Tilde{u}(x_1)}<1/4,\quad \abs{-\Tilde{u}(x_0)}<1/4.
\]
But $\Tilde{u}(x_1)=\Tilde{u}(x_0)$, and so we obtain $1\le \abs{1-\Tilde{u}(x_1)}+\abs{\Tilde{u}(x_1)}<1/2 $: contradiction! We conclude that $\mathcal{D}$ must separate the points of $\X$. Finally, by construction, $\mathcal{D}$ is countable, and thus it may serve as a separating sequence for $(\X,\tau)$. 
\end{proof}

\section{Appendix}\label{sec: Appendix}

\begin{lemma}\label{lemma: Ext}
    Consider the operators $\operatorname{Ext}_k,k=1,2,3$ defined in \eqref{eq: Ext1}, \eqref{eq: Ext2} and \eqref{eq: Ext3}. These operators are well-defined, linear and bounded.
\end{lemma}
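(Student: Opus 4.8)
The three assertions are independent and each reduces to a short estimate based on the orthonormality of $(e_j)_{j\in\N}$ together with the Cauchy--Schwarz inequality; the plan is to dispatch them one at a time, observing in each case that boundedness of the operator falls out automatically from the inequality used to establish well-definedness. I would begin with the simplest, $\operatorname{Ext}_3$. For $y\in\R^N$ the vector $\operatorname{Ext}_3(y)=(y_1,\dots,y_N,0,0,\dots)$ is finitely supported and hence lies in $V=\ell^2(\N)$, linearity in $y$ is immediate from the coordinatewise formula, and $\norm{\operatorname{Ext}_3(y)}_V^2=\sum_{j=1}^N y_j^2=\norm{y}_{\R^N}^2$, so $\operatorname{Ext}_3$ is in fact an isometry, in particular linear and bounded.

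Next I would treat $\operatorname{Ext}_1$. For $h\in\R^N$ and an arbitrary $x=\sum_{j}(x,e_j)_V\,e_j\in V$, the prescription $\langle\operatorname{Ext}_1(h),e_j\rangle=h_j$ for $1\le j\le N$ (and $0$ otherwise), extended by linearity, gives $\langle\operatorname{Ext}_1(h),x\rangle=\sum_{j=1}^N h_j\,(x,e_j)_V$, a finite sum and hence unambiguously defined. Cauchy--Schwarz then yields $\abs{\langle\operatorname{Ext}_1(h),x\rangle}\le\norm{h}_{\R^N}\big(\sum_{j=1}^N (x,e_j)_V^2\big)^{1/2}\le\norm{h}_{\R^N}\norm{x}_V$, which shows at once that $\operatorname{Ext}_1(h)$ is a continuous linear functional (so indeed $\operatorname{Ext}_1(h)\in V'$) and that $\norm{\operatorname{Ext}_1(h)}_{V'}\le\norm{h}_{\R^N}$; linearity of $h\mapsto\operatorname{Ext}_1(h)$ is clear from the defining formula.

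The only step requiring a little more care is $\operatorname{Ext}_2$, where one must verify that the prescription $(Be_j,e_i)_V=\bar{\beta}_{ij}$ genuinely determines a bounded operator on all of $V$, rather than merely a densely defined one. Here I would define $B$ explicitly by $Bx:=\sum_{i=1}^N\big(\sum_{j=1}^N\beta_{ij}\,(x,e_j)_V\big)e_i$ for $x\in V$. This is finitely supported, so $Bx\in V$, and a term-by-term application of Cauchy--Schwarz gives $\norm{Bx}_V^2=\sum_{i=1}^N\big(\sum_{j=1}^N\beta_{ij}\,(x,e_j)_V\big)^2\le\big(\sum_{i,j=1}^N\beta_{ij}^2\big)\norm{x}_V^2=\norm{\beta}_{\R^{N\times N}}^2\,\norm{x}_V^2$, so that $B\in\mathcal{L}(V)$ with $\norm{B}_{\mathcal{L}(V)}\le\norm{\beta}_{\R^{N\times N}}$. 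The identity $(Be_j,e_i)_V=\bar{\beta}_{ij}$ holds by construction, and since any element of $\mathcal{L}(V)$ is determined by its matrix entries against the orthonormal basis, $B$ is the unique such operator; linearity in $\beta$ is again immediate from the formula.

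Thus in every case the boundedness of the operator is an automatic byproduct of the estimate proving well-definedness, and no separate argument is needed. The main (and only mild) obstacle is the verification for $\operatorname{Ext}_2$ that the matrix prescription yields a genuine bounded operator; the finite support of the range of $B$ sidesteps any convergence issue and reduces the whole matter to the Frobenius-type bound displayed above.
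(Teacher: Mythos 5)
Your proof is correct and takes essentially the same approach as the paper: the same three estimates appear (the bound $\norm{\operatorname{Ext}_1(h)}_{V'}\le\norm{h}_{\R^N}$, the Frobenius-type bound $\norm{\operatorname{Ext}_2(\beta)}_{\mathcal{L}(V)}\le\norm{\beta}_{\R^{N\times N}}$ via Cauchy--Schwarz, and the triviality/isometry of $\operatorname{Ext}_3$). The only cosmetic difference is that you define $B$ by an explicit finite-rank formula and check boundedness and uniqueness directly, whereas the paper defines $Be_j$ on the basis vectors and invokes a standard extension result to obtain the bounded operator; both routes produce the same operator and the same norm bound.
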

\begin{proof}
   It is trivial to check that $\operatorname{Ext}_1(h)\in V'$ for any $h\in\R^N$. Besides, $\operatorname{Ext}_1$ is linear and 
   \[
   \norm{\operatorname{Ext}_1(h)}_{V'}=\sup_{\norm{v}_V=1 }\abs{\langle \operatorname{Ext}_1(h),v \rangle}\le \norm{h}_{\R^N},
   \]
showing its boundedness. 

Let us deal with $\operatorname{Ext}_2$ now. Given $\beta\in\R^{N\times N}$, define $\Bar{\beta}$ accordingly. Then define $Be_j,j\in\N$ as the unique element of $V$ such that $(Be_j,e_i)_V=\Bar{\beta}_{ij},\,j,i\in\N$. Besides, it holds
\[
\sum_{j=1}^\infty\norm{Be_j}_V = \sum_{j=1}^N\left\{\sum_{i=1}^N\beta^2_{ij} \right\}^{1/2}<\infty,
\]
which guarantees that there exists a unique extension of $B$ to a bounded linear operator from $V$ to itself: see e.g. \cite[page 30]{conway2010}. We set $\operatorname{Ext}_2(\beta):=B$. It is easy to check that $\operatorname{Ext}_2$ is linear. Moreover, for each $v\in V,v=\sum_{j=1}^\infty v_je_j$, it holds $Bv = \sum_{j=1}^\infty v_jBe_j$, and whence
\[
(Bv,e_i)_V = \sum_{j=1}^\infty v_j(Be_j,e_i)_V=\sum_{j=1}^\infty v_j\Bar{\beta}_{ij} =\sum_{j=1}^Nv_j\beta_{ij}
\]
for $1\le i\le N$ and 0 otherwise. We conclude that
\[
\norm{Bv}_V\le \left[
\norm{v}_V^2\sum_{i,j=1}^N\beta_{ij}^2
\right]^{1/2}=
\norm{v}_V\norm{\beta}_{\R^{N\times N}}
,\quad v\in V,
\]
i.e. $\operatorname{Ext}_2$ is bounded.

Finally, the fact the $\operatorname{Ext}_3$ is linear and bounded is trivial.
   
\end{proof}

\subsection{On the reasons for the choice of a non-standard metric projection}\label{subsec: metric projection} 

In this last part, we are going to explain the reasons why we had to resort to the special metric projection defined in Lemma \ref{lemma: projection in metric spaces}, which is not continuous but only measurable, rather than to the standard metric projection (also known as best approximation) and continuous selections: see e.g. \cite{Deutsch}. For unexplained terminology in the sequel, we refer to that paper.

For an arbitrary non-empty set $M\subset V$ and $x\in V$, we define the set of all best approximations to $x$ from $M$ as
\[
\mathcal{P}_M(x)=\{
y\in M;\, \norm{x-y}=\inf_{z\in M}\norm{x-z}
\}.
\]
Consider now $M=\{a_1,\dots, a_R\}\subset V$ with $R> 2$. Then it is easy to see that $M$ is proximinal. Besides, we claim that $M$ is almost Chebyshev. Indeed, we have
\[
I:=\{
x\in V;\, \mathcal P_M(x) \text{ is not a singleton}  
\}=
\bigcup_{A\subset M;\, \sharp(A)\ge 2}
\{
x\in V;\,\mathcal P_M(x)=A
\}
\]
Consider now $A=\{a_i,a_j\}$ and $x\in V:\, \mathcal P_M(x)=A$. Then, we must have $\norm{x-a_i}=\norm{x-a_j}$, from which we deduce
\[
( x,a_i-a_j)_V = \frac{1}{2}(\norm{a_i}^2-\norm{a_j}^2).
\]
It follows, 
\[
\left\{
x\in V;\, \mathcal P_M(x)=\{a_i,a_j\}
\right\} \subset \left\{ x\in V;\,
( x,a_i-a_j)_V = \frac{1}{2}(\norm{a_i}^2-\norm{a_j}^2)
\right\},
\]
and the right hand side is closed and with empty interior (since $a_i\neq a_j$), namely it is nowhere dense.

On the other hand, if $\sharp(A)>2$ and $\mathcal P_M(x)=A$, then 
\[
x\in \left\{ x\in V;\,
( x,a_i-a_j)_V = \frac{1}{2}(\norm{a_i}^2-\norm{a_j}^2)
\right\}
\]
for any possible choice of $a_i\neq a_j\in A$. In light of this, we can write now
\[
I \subset \cup_{i\neq j} \left\{ x\in V;\,
( x,a_i-a_j)_V = \frac{1}{2}(\norm{a_i}^2-\norm{a_j}^2)
\right\},
\]
namely $I$ is a subset of a meager set, and so it is itself meager. So, $M$ is almost Chebyshev. 

Let us prove now that $\mathcal P_M$ is not 2-lower-semicontinuous: upon re-labelling, we can assume from the beginning
\[
    \alpha:=\norm{a_1-a_2}\le \norm{a_j-a_\ell},\quad j\neq\ell. 
\]
Pick $0<\varepsilon<1$ such that 
\[
    B_\varepsilon(a_1) \cap B_\varepsilon(a_2) = \emptyset.
\]
Let $x_0\in V$ and $U(x_0)$ be an arbitrary neighborhood of $x_0$. Choose $1/2<t<1$ and $0<s<1/2$ such that
    \[
    x_t:=ta_1+(1-t)a_2\in U(x_0), \quad x_s:=sa_1+(1-s)a_s\in U(x_0).
    \]
    Thus, $\norm{a_1-x_t}=(1-t)\alpha$, $\norm{a_2-x_t}=t\alpha$, and for $j\neq 1,2$ it holds
    \[
    \norm{a_j-x_t}\ge\abs{\norm{a_j-a_1} - \norm{a_1-x_t} } = \abs{\norm{a_j-a_1} -(1-t)\alpha } = \norm{a_j-a_1}-(1-t)\alpha
    \]
    because $\norm{a_j-a_1}\ge\alpha$.  
    In light of this, $\norm{a_j-x_t}\ge t\alpha$. Since $1/2<t$, we conclude $\mathcal P_M(x_t)=\{a_1\}$. By symmetry, $\mathcal P_M(x_s)=\{a_2\}$. Thus,
    \[
    B_\varepsilon(\mathcal P_M(x_t))\cap B_\varepsilon(\mathcal P_M(x_s))=B_\varepsilon(a_1)\cap B_\varepsilon(a_2)
    \]
    which is the empty set. Therefore, $\mathcal P_M$ is not 2-lower-semicontinous at $x_0$.

So, to sum up, $M=\{a_1,\dots,a_R\}$ is proximinal, almost Chebyshev, and the projection $\mathcal{P}_M$ is not 2-lower-semicontinuous. But then, in force of the characterization provided by \cite[Corollary 3.2]{Deutsch}, we conclude that $\mathcal P_M$ can not admit a continuous selection, namely a continuous map $p:V\to M$ such that for all $x\in V$ it holds $p(x)\in\mathcal P_M(x)$.

\vskip 1cm
\noindent {\bf Conflict of Interest:} The author declares that he has no conflict of interest.

\bibliography{literature}

\begin{thebibliography}{10}

\bibitem{Hypertransformer}
B.~Acciaio, A.~Kratsios, and G.~Pammer.
\newblock Designing universal causal deep learning models: The geometric
  (hyper)transformer.
\newblock {\em Mathematical Finance}, 34:671--735, 2024.

\bibitem{256500}
A.~Barron.
\newblock Universal approximation bounds for superpositions of a sigmoidal
  function.
\newblock {\em IEEE Transactions on Information Theory}, 39(3):930--945, 1993.

\bibitem{beck2021overview}
C.~Beck, M.~Hutzenthaler, A.~Jentzen, and B.~Kuckuck.
\newblock An overview on deep learning-based approximation methods for partial
  differential equations, 2021.

\bibitem{BDG0}
F.~Benth, N.~Detering, and L.~Galimberti.
\newblock Neural networks in {F}r\'echet spaces.
\newblock {\em Annals of Mathematics and Artificial Intelligence}, 91:75--103,
  2023.

\bibitem{BDG2}
F.~E. Benth, N.~Detering, and L.~Galimberti.
\newblock Pricing options on flow forwards by neural networks in {H}ilbert
  space.
\newblock {\em Finance and Stochastic}, 28:81--121, 2024.

\bibitem{MathFoundationsDeep}
J.~Berner, P.~Grohs, G.~Kutyniok, and P.~Petersen.
\newblock {\em The Modern Mathematics of Deep Learning}.
\newblock Cambridge University Press, 2022.

\bibitem{Billingsley}
P.~Billingsley.
\newblock {\em Probability and Measure}.
\newblock Wiley, 1995.

\bibitem{BrzezniakMotyl}
Z.~Brzezniak and E.~Motyl.
\newblock Existence of a martingale solution of the stochastic navier-stokes
  equation in unbounded 2d and 3d domains.
\newblock {\em J. Differ. Equ.,}, 254(4):1627--1685, 2013.

\bibitem{BrzezniakOndrejat}
Z.~Brzezniak and M.~Ondrejat.
\newblock Weak solutions to stochastic wave equations with values in riemannian
  manifolds.
\newblock {\em Commun. Partial Differ. Equ.,}, 36(9):1624--1653, 2011.

\bibitem{286886}
T.~Chen and H.~Chen.
\newblock Approximations of continuous functionals by neural networks with
  application to dynamic systems.
\newblock {\em IEEE Transactions on Neural Networks}, 4(6):910--918, 1993.

\bibitem{392253}
T.~Chen and H.~Chen.
\newblock Universal approximation to nonlinear operators by neural networks
  with arbitrary activation functions and its application to dynamical systems.
\newblock {\em IEEE Transactions on Neural Networks}, 6(4):911--917, 1995.

\bibitem{ChoSaul}
Y.~Cho and L.~K. Saul.
\newblock Kernel methods for deep learning.
\newblock {\em Advances in neural information processing systems,}, pages
  342--350, 2009.

\bibitem{conway2010}
J.~B. Conway.
\newblock {\em A {C}ourse in {F}unctional {A}nalysis}.
\newblock Graduate {T}exts in {M}athematics; 96. Springer Science+Business
  Media, New York, 2nd edition, 2010.

\bibitem{Cuchiero2020DeepNN}
C.~Cuchiero, M.~Larsson, and J.~Teichmann.
\newblock Deep neural networks, generic universal interpolation, and controlled
  odes.
\newblock {\em SIAM J. Math. Data Sci.}, 2:901--919, 2020.

\bibitem{CuchieroSchmockerTeichmann}
C.~Cuchiero, P.~Schmocker, and J.~Teichmann.
\newblock Global universal approximation of functional input maps on weighted
  spaces, 2023.

\bibitem{Cybenko1989}
G.~Cybenko.
\newblock Approximation by superpositions of a sigmoidal function.
\newblock {\em Mathematics of Control, Signals and Systems}, 2(4):303--314,
  1989.

\bibitem{daprato_zabczyk_1992}
G.~Da~Prato and J.~Zabczyk.
\newblock {\em Stochastic Equations in Infinite Dimensions}.
\newblock Encyclopedia of Mathematics and its Applications. Cambridge
  University Press, 1992.

\bibitem{Deutsch}
F.~Deutsch and P.~Kenderov.
\newblock Continuous selections and approximate selection for set-valued
  mappings and applications to metric projections.
\newblock {\em SIAM J. Math. Analysis,}, 14(1), 1983.

\bibitem{10.1007/BF02392270}
P.~Enflo.
\newblock {A counterexample to the approximation problem in Banach spaces}.
\newblock {\em Acta Mathematica}, 130:309 -- 317, 1973.

\bibitem{FUNAHASHI1989183}
K.-I. Funahashi.
\newblock On the approximate realization of continuous mappings by neural
  networks.
\newblock {\em Neural Networks}, 2(3):183--192, 1989.

\bibitem{GalimbertiHoldenKarlsenPang}
L.~Galimberti, H.~Holden, K.~Karlsen, and P.~Pang.
\newblock Global existence of dissipative solutions to the camassa–holm
  equation with transport noise.
\newblock {\em Journal of Differential Equations}, 387:1--103, 2024.

\bibitem{GKL}
L.~Galimberti, A.~Kratsios, and G.~Livieri.
\newblock Designing universal causal deep learning models: The case of
  infinite-dimensional dynamical systems from stochastic analysis, 2022.

\bibitem{PreSchauder}
F.~Garcia-Pacheco and F.~Perez-Fernandez.
\newblock Pre-schauder bases in topological vector spaces.
\newblock {\em Symmetry}, 11(8), 2019.

\bibitem{Guss}
W.~H. Guss and R.~Salakhutdinov.
\newblock On universal approximation by neural networks with uniform guarantees
  on approximation of infinite dimensional maps, 2019.

\bibitem{Han8505}
J.~Han, A.~Jentzen, and W.~E.
\newblock Solving high-dimensional partial differential equations using deep
  learning.
\newblock {\em Proceedings of the National Academy of Sciences},
  115(34):8505--8510, 2018.

\bibitem{articleWidth2}
B.~Hanin and M.~Sellke.
\newblock Approximating continuous functions by relu nets of minimal width.
\newblock {\em ArXiv}, abs/1710.11278, 2017.

\bibitem{HJ}
T.~Hazan and T.~Jaakola.
\newblock Steps toward deep kernel methods from infinite neural networks, 2015.

\bibitem{Hornik1991}
K.~Hornik.
\newblock Approximation capabilities of multilayer feedforward networks.
\newblock {\em Neural Networks}, 4(2):251--257, 1991.

\bibitem{HORNIK1989359}
K.~Hornik, M.~Stinchcombe, and H.~White.
\newblock Multilayer feedforward networks are universal approximators.
\newblock {\em Neural Networks}, 2(5):359--366, 1989.

\bibitem{dneuralnetPDE}
M.~Hutzenthaler, A.~Jentzen, T.~Kruse, and T.~A. Nguyen.
\newblock A proof that rectified deep neural networks overcome the curse of
  dimensionality in the numerical approximation of semilinear heat equations.
\newblock {\em SN Partial Differential Equations and Applications}, 1(2):10,
  2020.

\bibitem{jakubowski}
A.~Jakubowski.
\newblock Short communication:the almost sure skorokhod representation for
  subsequences in nonmetric spaces.
\newblock {\em Theory of Prob. and Its Appl.}, 42(1), 1998.

\bibitem{kechris}
A.~Kechris.
\newblock {\em Classical Descriptive Set Theory}.
\newblock Springer, 1995.

\bibitem{pmlr-v125-kidger20a}
P.~Kidger and T.~Lyons.
\newblock {Universal Approximation with Deep Narrow Networks}.
\newblock In J.~Abernethy and S.~Agarwal, editors, {\em Proceedings of Thirty
  Third Conference on Learning Theory}, volume 125 of {\em Proceedings of
  Machine Learning Research}, pages 2306--2327. PMLR, 09--12 Jul 2020.

\bibitem{https://doi.org/10.48550/arxiv.2108.08481}
N.~Kovachki, Z.~Li, B.~Liu, K.~Azizzadenesheli, K.~Bhattacharya, A.~Stuart, and
  A.~Anandkumar.
\newblock Neural operator: Learning maps between function spaces, 2021.

\bibitem{EchoState}
G.~L. and O.~J.P.
\newblock Echo state networks are universal.
\newblock {\em Neural Networks}, 108:495--508, 2018.

\bibitem{ReservoirComputing}
G.~L., G.~L., and O.~J.P.
\newblock Infinite-dimensional reservoir computing, 2023.

\bibitem{10.1093/imatrm/tnac001}
S.~Lanthaler, S.~Mishra, and G.~E. Karniadakis.
\newblock {Error estimates for DeepONets: a deep learning framework in infinite
  dimensions}.
\newblock {\em Transactions of Mathematics and Its Applications}, 6(1), 03
  2022.
\newblock tnac001.

\bibitem{LESHNO1993861}
M.~Leshno, V.~Y. Lin, A.~Pinkus, and S.~Schocken.
\newblock Multilayer feedforward networks with a nonpolynomial activation
  function can approximate any function.
\newblock {\em Neural Networks}, 6(6):861--867, 1993.

\bibitem{https://doi.org/10.48550/arxiv.2003.03485}
Z.~Li, N.~Kovachki, K.~Azizzadenesheli, B.~Liu, K.~Bhattacharya, A.~Stuart, and
  A.~Anandkumar.
\newblock Neural operator: Graph kernel network for partial differential
  equations, 2020.

\bibitem{FNO}
Z.~Li, K.~N., K.~Azizzadenesheli, B.~Liu, K.~Bhattacharya, A.~Stuart, and
  A.~Anima.
\newblock Fourier neural operator for parametric partial differential
  equations.
\newblock {\em ICLR}, 2019.

\bibitem{LiDeepONets}
L.~Lu, P.~Jin, G.~Pang, Z.~Zhang, and G.~E. Karniadakis.
\newblock Learning nonlinear operators via deeponet based on the universal
  approximation theorem of operators.
\newblock {\em Nature Machine Intelligence}, 3(3):218--229, 2021.

\bibitem{articleWidth1}
Z.~Lu, H.~Pu, F.~Wang, Z.~Hu, and L.~Wang.
\newblock The expressive power of neural networks: A view from the width.
\newblock {\em Neural Information Processing Systems}, 09 2017.

\bibitem{PhysicsInformed}
R.~M., P.~P., and K.~G.
\newblock Physics-informed neural networks: A deep learning framework for
  solving forward and inverse problems involving nonlinear partial differential
  equations.
\newblock {\em Journal of Computational Physics}, 378:686--707, 2019.

\bibitem{McCullochPitts}
W.~McCulloch and W.~Pitts.
\newblock A logical calculus of the ideas immanent in nervous activity.
\newblock {\em The bulletin of mathematical biophysics}, 5:115--133, 1943.

\bibitem{6795742}
H.~Mhaskar and H.~Nahmwoo.
\newblock Neural networks for functional approximation and system
  identification.
\newblock {\em Neural Computation}, 9(1):143--159, 1997.

\bibitem{https://doi.org/10.48550/arxiv.2201.11440}
D.~Müller, I.~Soto-Rey, and F.~Kramer.
\newblock An analysis on ensemble learning optimized medical image
  classification with deep convolutional neural networks, 2022.

\bibitem{Neal}
R.~M. Neal.
\newblock {\em Bayesian {L}earning for {N}eural {N}etworks}.
\newblock Lecture {N}otes in {S}tatistics: 118. Springer Science+Business
  Media, New York, 1996.

\bibitem{PetersenRaslanVoigtlaender}
P.~Petersen, M.~Raslan, and F.~Voigtlaender.
\newblock Topological properties of the set of functions generated by neural
  networks of fixed size.
\newblock {\em Foundations of Computational Mathematics}, 21:375--444, 2021.

\bibitem{pinkus_1999}
A.~Pinkus.
\newblock Approximation theory of the mlp model in neural networks.
\newblock {\em Acta Numerica}, 8:143–195, 1999.

\bibitem{RamSilver}
J.~O. Ramsey and B.~W. Silverman.
\newblock {\em Functional {D}ata {A}nalysis}.
\newblock Springer Science+Business Media, New York, 2nd edition, 2005.

\bibitem{Mangatiana_CompactnessPrinciplesTVS}
M.~Robdera.
\newblock Compactness principles for topological vector spaces.
\newblock {\em Topological Algebra and its Applications}, 10:246--254, 2022.

\bibitem{Robinson}
J.~Robinson.
\newblock {\em Dimensions, embeddings, and attractors}.
\newblock Cambridge Tracts in Mathematics. Cambridge University Press, 2011.

\bibitem{76498}
I.~Sandberg.
\newblock Approximation theorems for discrete-time systems.
\newblock {\em IEEE Transactions on Circuits and Systems}, 38(5):564--566,
  1991.

\bibitem{schaefer1999topological}
H.~Schaefer.
\newblock {\em Topological Vector Spaces (Second Edition)}.
\newblock Springer, 1999.

\bibitem{shuchat}
A.~Shuchat.
\newblock Approximation of vector-valued continuous functions.
\newblock {\em Proc. of the American Mathematical Society}, 31(1), 1972.

\bibitem{shuchat2}
A.~Shuchat.
\newblock Integral representation theorems in topological vector spaces.
\newblock {\em Trans. of the American Mathematical Society}, 172, 1972.

\bibitem{SmithTrivisa}
S.~Smith and K.~Trivisa.
\newblock The stochastic navier-stokes equations for heat-conducting,
  compressible fluids: global existence of weak solutions.
\newblock {\em J. Evol. Equ.,}, 18(2):411--465, 2018.

\bibitem{STINCHCOMBE1999467}
M.~Stinchcombe.
\newblock Neural network approximation of continuous functionals and continuous
  functions on compactifications.
\newblock {\em Neural Networks}, 12(3):467--477, 1999.

\bibitem{brainImage}
T.~S. Tian.
\newblock Functional data analysis in brain imaging studies.
\newblock {\em Frontiers in psychology}, 1:35--35, 10 2010.

\bibitem{Williams}
C.~K.~I. Williams.
\newblock Computing with infinite networks.
\newblock {\em Advances in neural information processing systems,}, pages
  295--301, 1997.

\end{thebibliography}
\bibliographystyle{abbrv}

\end{document}